\definecolor{myred}{RGB}{228,26,28}
\definecolor{myblue}{RGB}{55,124,184}
\definecolor{mygreen}{RGB}{77,175,74}
\setlist[itemize]{label={$\vcenter{\hbox{\tiny$\bullet$}}$}}
\newcommand{\ds}{\displaystyle}
\newcommand{\set}[1]{\left\{ #1 \right\}}
\newcommand{\C}{\mathbb{C}}
\newcommand{\N}{\mathbb{N}}
\newcommand{\R}{\mathbb{R}}
\renewcommand{\S}{\mathbb{S}}
\newcommand{\J}{\mathbb{J}}
\renewcommand{\P}{\mathbb{P}}
\newcommand{\Z}{\mathbb{Z}}
\renewcommand{\S}{\mathbb{S}}
\newcommand{\Forall}{\forall\ }
\newcommand{\ie}{\emph{i.e.}\ }
\newcommand{\eg}{\emph{e.g.}\ }
\newcommand{\Span}{{\rm Span} \, }
\renewcommand{\Re}{{\mathfrak R} \, }
\renewcommand{\Im}{{\mathfrak I} \, }
\renewcommand{\d}{{\rm d}}
\newcommand{\prt}[1]{\left( #1 \right)}
\renewcommand{\i}{\imath}
\renewcommand{\L}{{\rm L}}
\newcommand{\W}{{\rm W}}
\newcommand{\norm}[1]{\left\| #1 \right\|}
\newcommand{\wb}[1]{\overline{#1}}
\newcommand{\abs}[1]{\left| #1 \right|}
\newcommand{\Xc}{\mathcal{X}}
\newcommand\inner[1]{\langle #1 \rangle}
\DeclareMathOperator{\dist}{dist}
\renewcommand{\epsilon}{\varepsilon}
\theoremstyle{plain}
\newtheorem{proposition}{Proposition}
\theoremstyle{definition}
\newtheorem{remark}{Remark}
\theoremstyle{definition}
\newtheorem{definition}{Definition}
\theoremstyle{plain}
\newtheorem{theorem}{Theorem}
\theoremstyle{plain}
\newtheorem{lemma}{Lemma}
\title{Numerical simulation of the Gross--Pitaevskii equation via vortex
  tracking}
\author{Thiago Carvalho Corso$^1$}
\author{Gaspard Kemlin$^2$}
\author{Christof Melcher$^3$}
\author{Benjamin Stamm$^1$}
\address[1]{IANS-NMH, University of Stuttgart, 70569 Stuttgart, Germany}
\address[2]{LAMFA, Universit\'e de Picardie Jules Verne, 80039 Amiens, France}
\address[3]{Applied Analysis and JARA FIT, RWTH Aachen University, 52056
  Aachen, Germany}
\email{thiago.carvalho-corso@mathematik.uni-stuttgart.de}
\email{gaspard.kemlin@u-picardie.fr}
\email{melcher@math1.rwth-aachen.de}
\email{best@ians.uni-stuttgart.de}
\begin{document}

\begin{abstract}
  This paper deals with the numerical simulation of the Gross--Pitaevskii (GP)
  equation, for which a well-known feature is the appearance of quantized vortices with
  core size of the order of a small parameter $\varepsilon$. Without a magnetic field and
  with suitable initial conditions, these vortices interact, in the singular limit
  $\varepsilon\to0$, through
  an explicit Hamiltonian dynamics. Using this analytical framework, we develop and
  analyze a numerical strategy based on the reduced-order Hamiltonian system to
  efficiently simulate the infinite-dimensional GP equation for small, but finite,
  $\varepsilon$. This method allows us to avoid numerical stability issues in solving the GP
  equation, where small values of $\varepsilon$ typically require very fine meshes and time
  steps. We also provide a mathematical justification of our method in terms of rigorous error estimates of the error in
  the supercurrent, together with numerical illustrations.
\end{abstract}

\maketitle

\section{Introduction}

\subsection{Model and motivations}

Gross--Pitaevskii equations (GP) are a class of nonlinear Schr\"odinger equations
for quantum mechanical many particle systems in terms of a complex-valued field
$u$, featured \eg in Bose--Einstein
condensation, superconductivity and other condensed matter systems.
A prototypical form of GP arises from the usual Ginzburg–Landau Hamiltonian of a super-fluid
\begin{equation} \label{eq:GP-energy}
  E_\varepsilon(u) = \int_\Omega e_\varepsilon(u)
  \quad\text{with}\quad
  e_\varepsilon(u) = \frac12 |\nabla u|^2 + \frac{1}{4\varepsilon^2} (1-|u|^2)^2 \,
\end{equation}
on a simply connected smoothly bounded domain $\Omega \subset \R^2$. The
mathematical investigation of the variational model and its minimizers in the
singular limit $\varepsilon \to 0$ has been initiated in the seminal work
\cite{bethuelGinzburgLandauVortices1994}. The corresponding Schr\"odinger flow
$\i\partial_t \psi = -\nabla_{L^2} E_\varepsilon(\psi)$ (with $\i$ the imaginary
unit),
on a smooth bounded domain $\Omega\subset\R^2$ with homogeneous Neumann boundary
conditions, has the form
\begin{equation}\label{eq:GPE}
  \begin{cases}
    \ds\i \partial_t \psi_\varepsilon(x,t) = \Delta \psi_\varepsilon(x,t) +
    \frac1{\varepsilon^2}\prt{1-\abs{\psi_\varepsilon(x,t)}^2}\psi_\varepsilon(x,t),
    \quad x\in\Omega,\\
    \psi_\varepsilon(x,0) = \psi_\varepsilon^0(x) \quad x\in\Omega,\\
    \partial_\nu\psi_\varepsilon(x,t) = 0 \quad x\in\partial\Omega,\ t\geq0,
  \end{cases}
\end{equation}
where $\psi_\varepsilon^0\in H^1(\Omega,\C)$ and $\nu$ is the outward normal to
the boundary $\partial\Omega$.
This equation is globally well-posed in time (see for instance
\cite{bourgainFourierTransformRestriction1993,cazenaveSemilinearSchrodingerEquations2003,galloSchrodingerGroupZhidkov2004,gerardCauchyProblemGross2006,ogawaTrudingerTypeInequalities1991}
and references therein)
for any $\varepsilon>0$ and,
because of the homogeneous Neumann boundary condition, the energy
$E_\varepsilon(\psi_\varepsilon)$ is preserved: $E_\varepsilon(\psi_\varepsilon(t)) =
E_\varepsilon(\psi_\varepsilon^0)$ for all $t\geq0$. In this context $|\psi|^2$
is interpreted as the density, and other physical quantities such as the supercurrents
$j(\psi) = \frac 1 {2\i} ( \overline{\psi}\nabla \psi
- \psi \nabla \overline{\psi})$ and the vorticity $J\psi = \frac 1 2
\nabla\times j(\psi)$ can be defined.

A striking feature of super-fluids is the occurrence of quantized
vortices that evolve to leading order according to the ODE system known for
point vortices in an ideal incompressible fluid. This behavior has been
predicted by formal analysis in physics
\cite{fetterStabilityLatticeSuperfluid1966}
and verified by matched asymptotics in applied mathematics literature
\cite{eDynamicsVorticesGinzburgLandau1994a,neuVorticesComplexScalar1990}. The mathematical link is the
Madelung transform which is known to relate Schr\"odinger equations in quantum
mechanics and Euler equations in fluid mechanics. The rigorous description of
vortex dynamics arising from GP in the variational framework of
\cite{bethuelGinzburgLandauVortices1994} has been established in
\cite{collianderVortexDynamicsGinzburgLandauSchrodinger1998,linIncompressibleFluidLimit1999},
see also \cite{ovchinnikovGinzburgLandauEquationIII1998}.
It considers a family of solutions $\psi_\varepsilon$ to \eqref{eq:GPE}
with initial data $\psi^{0}_\varepsilon$ for which the vorticity converges to a sum
of point masses at distinct points (vortex locations) $a^0_{j} \in \Omega$
with a single quantum of vorticity $d_j =\pm 1$, \ie
\begin{equation} \label{eq:Jacobi_convergence}
  J\psi^0_\varepsilon \to  \pi \sum_{j=1}^N d_j \delta_{a^0_j}
\end{equation}
weakly in the sense of measures. Suitable norms to control this convergence
include the $\dot{W}^{-1,1}$ norm dual to the Lipschitz norm, closely related to
minimal connection and the $1$-Wasserstein distance.
A further key assumption is the so-called \enquote{well-preparedness} of initial conditions,
\ie initial energies are asymptotically minimal given boundary conditions,
vortex locations $a_j^0$ and vorticities $d_j$ as above. The result is that
$J\psi_\varepsilon(t) \to  \pi \sum_{j=1}^N d_j \delta_{a_j(t)}$ for $t > 0$
where the singular points $\bm{a}(t)=(a_1(t),...,a_N(t))$ of degree
$d=(d_1,...,d_N)$ solve the point vortex system
\begin{equation}
  \label{eq:Effective-equation}
  \begin{cases}
    \dot{a}_j(t) = -\frac1\pi d_j\J\nabla_{a_j} W({\bm a}(t), d),\\
    {a}_j(0) = {a}^0_j,
  \end{cases}
  \quad \text{with} \quad \J = \begin{bmatrix} 0 & 1 \\ -1 & 0 \end{bmatrix},
\end{equation}
with the standard symplectic matrix $\mathbb{J} \in \R^{2 \times 2}$ and the two-dimensional Coulomb-type Hamiltonian
\begin{equation}\label{eq:renormalized}
  W({\bm a},{d}) = - {\pi} \sum_{1\leq i\neq j \leq N} d_i d_j \ln|a_i-a_j| + \mbox{boundary terms}.
\end{equation}
For $\Omega=\R^2$ in the absence of boundary conditions, this Hamiltonian system is classical and well-understood. The general case of a bounded domain $\Omega$
is more subtle and recently became a topic of intense investigations
particularly regarding the existence of periodic solutions, see
\eg \cite{bartschPeriodicSolutionsSingular2016}.

From a variational point of view, $W$ is the renormalized energy introduced in
\cite{bethuelGinzburgLandauVortices1994}, corresponding to the $\Gamma$~-~limit of
the Ginzburg--Landau energy \eqref{eq:GP-energy} minus the logarithmically
diverging self-energy of $N$ vortices of degree $d_j = \pm 1$:
\begin{equation} \label{eq:expansion}
  E_\varepsilon(u_\varepsilon) - N \left( \pi  \ln(1/\varepsilon) + \gamma
  \right) \approx  W({\bm a},{d}) \quad \text{as}  \quad \varepsilon \to 0,
\end{equation}
for an explicit constant $\gamma>0$, with respect to the convergence \eqref{eq:Jacobi_convergence}.
Results also characterize the asymptotic limits of the supercurrents
$j(\psi_\varepsilon(t))$ in $L^p(\Omega)$ and of the wave functions
$\psi_\varepsilon(t)$ in $W^{1,p}(\Omega)$ for $p < 2$, respectively, towards
$u^*(t)$ with values in the unit sphere~$\mathbb{S}^1$ having point singularities at the site of
vortex locations $\bm{a}(t)$. More precisely, $u^*=u^*(\cdot ; \bm{a}, d) \in
\mathbb{S}^1$ is the uniquely determined harmonic map with singularities at
locations~$\bm{a}$ with local degrees~$d$, and with boundary conditions.
While GP is globally (in time) well-posed in
appropriate function spaces,
the asymptotic results are only valid up to the first time of collision of the
point vortex system which may happen in finite time.
A substantial improvement of the above asymptotic results has been obtained in
\cite{jerrardRefinedJacobianEstimates2008}, which establish a quantitative
description based on largely improved Jacobian and supercurrents estimates.
This proves that the vortex motion law holds approximately for small but finite
$\varepsilon$, rather than in the limit $\varepsilon \to 0$, and is accompanied
by an estimate of the rate of convergence and the time interval for which the
results remain valid.

Finally, let us stress out that the two-dimensional aspect of the problem is a
key assumption, both from a mathematical point of view (as it allows to identify
$\R^2$ and $\C$) and a physical point of view (it is known that for such models,
the relevant physics is mainly driven by two-dimensional approximations, see for
instance \cite{simulaEmergenceOrderTurbulence2014}
and references inside). In three dimensions, the singularities
that appear in the Gross--Pitaevskii equation have more complex shapes, such as
vortex filaments
\cite{jerrardDynamicsNearlyParallel2021,kaltIdentificationVorticesQuantum2023,
  kobayashiQuantumTurbulenceSimulations2021a,villoisVortexFilamentTracking2016}
which are one-dimensional structures where the density vanishes. The two-dimensional
case can alternatively be seen as the intersection between a plane and, for
instance, a cylinder: vortices are then the intersection of the filaments
with this plane. Such singularities go beyond the scope of this paper.

On the other hand, the numerical approximation of solutions of equation
\eqref{eq:GPE} is a well-established research field.
There is a variety of numerical methods to solve nonlinear Schr\"odinger
equations such as equation~\eqref{eq:GPE}.
A comprehensive overview of the main numerical methods to simulate such
equations can be found in \cite{antoineComputationalMethodsDynamics2013},
where the most common time-space-discretization strategies are compared and
different properties listed.
The large majority of numerical methods employ time-splitting strategies
together with a uniform spatial approximation, either by the Fourier
(pseudo-)spectral, finite element or finite difference method, see \eg
\cite{aftalionRotationBoseEinsteinCondensate2010,
  baoDynamicsRotatingBose2006,
  baoExplicitUnconditionallyStable2003,
  baoNumericalStudyQuantized2013,
  baoNumericalStudyQuantized2014,
  brachetGrossPitaevskiiDescription2012,
  kongSymplecticStructurepreservingIntegrators2011,
  zhangNumericalSimulationVortex2007}
and references inside, to mention a few of the many contributions.
From a perspective of approximation theory, such methods are of course fine as
long as the solutions are smooth during the simulation.
In the regime of small values of $\varepsilon$, which is the regime that we
target in this paper, this becomes delicate in terms of accuracy.
For irregular solutions containing singularities, it is common practice in
numerical analysis and scientific computing to perform mesh refinements. In the
case of finite element approximations, this leads to so-called adaptive finite
element methods which require an element-wise quantification of the error over
the mesh, which is typically achieved by considering the residual of the PDE and results in
residual-based \emph{a posteriori} estimators. Interestingly, the numerical study
presented in \cite{thalhammerNumericalStudyAdaptive2012} demonstrates
superiority of the Fourier (pseudo-) spectral method with adaptive higher-order
time-splitting schemes over an adaptive finite element method with local
time-stepping. However, this paper is limited to the case of the semi-classical
limit (\ie when the left-hand side of~\eqref{eq:GPE} is multiplied by
$1/\varepsilon$). In the case of discrete minimizers of the Ginzburg--Landau energy, recent works
\cite{dorichErrorBoundsDiscrete2023} highlighted, by rigorous \emph{a priori}
bounds in suitable Sobolev norms, the necessity of very fine element mesh when
the model parameter $\varepsilon$ is small. Additionally, in
\cite{baoNumericalStudyQuantized2013,baoNumericalStudyQuantized2014}, the
authors propose a numerical study of the vortex interaction for different types of
nonlinear Schr\"odinger equations.
They compare the vortex trajectory for
finite and small $\varepsilon$ to the one given by the effective dynamics
\eqref{eq:Effective-equation}, providing a numerical illustration of the
singular limit $\varepsilon\to0$.

These considerations motivate the search for novel numerical methods, taking
advantages of the known (low-dimensional) ODE in the singular limit
$\varepsilon\to0$ in order to approximate the solution to the GP equation
\eqref{eq:GPE} for small, but finite, $\varepsilon$. The idea we develop in this
paper is based on the construction of \enquote{well-prepared} initial conditions presented
in \cite{jerrardRefinedJacobianEstimates2008}, where the authors suggest to construct such initial
conditions by smoothing out the canonical harmonic map with provided vortex
locations. From an abstract perspective, this gives rise to a
nonlinear projection $\mathcal P_\varepsilon: (\bm{a},d)
\in\Omega^N\times\set{\pm 1}^N \mapsto u^*_{\varepsilon}(\bm a,d)$ on
$N$-dimensional submanifolds $M^\varepsilon\subset H^1(\Omega; \mathbb{C})$
defined by $M^\varepsilon = \{  u_\varepsilon^* = u_\varepsilon^*(x ; \bm{a},
d):  \bm{a} \in \Omega^{N} \} \cong \Omega^N$
of elements $u_\varepsilon^*$ obtained from smoothing out canonical harmonic
maps $u^*= u^*(x ; \bm{a},  d)$
corresponding to vortices at $\bm{a} \in \Omega^{* N}$ and with $d=\{\pm 1\}^N$
fixed in an energetically optimal fashion such that
\begin{equation}
  \label{eq:a_from_u}
  \left\| Ju_\varepsilon^* -  \pi \sum_{j=1}^N d_j \delta_{a_j} \right\|_{\dot{W}^{-1,1}} \lesssim \varepsilon^\alpha
  \quad\text{and}\quad
  \| j(u_\varepsilon^*) -  j(u^*(\bm a,d)) \|_{L^{\frac43}} \lesssim \varepsilon^\gamma,
\end{equation}
for some $\alpha, \gamma \in (0,1)$. The result of
\cite{jerrardRefinedJacobianEstimates2008} is that such a projection can be
defined in a (tubular) neighborhood of $M^\varepsilon$.
Moreover, if vortices are moved according to
\eqref{eq:Effective-equation}, the estimates remain valid for the GP solution,
giving rise to space-time projection onto a subspace in terms of vortex
trajectories. In other terms, well-preparedness is conserved in the time
interval for which these estimates are valid: in
this paper we suggest to use this property to recover an approximation of the
solution $\psi_\varepsilon$ to the GP equation at time $t>0$. The main idea is, starting
from a well-prepared initial condition $\psi^0_\varepsilon$, to evolve the vortices
according to the Hamiltonian ODE \eqref{eq:Effective-equation} up to some time
$t$. Then, by the same projection used to set-up well-prepared initial
conditions, build back an approximation $\psi_\varepsilon^*(t)$ of
$\psi_\varepsilon(t)$ by
smoothing out the canonical harmonic map with singularities given by the vortex
locations at time $t$. Numerically, the time consuming step is thus the
computation of the solutions to the Hamiltonian dynamics
\eqref{eq:Effective-equation}, which can done in a few seconds on a personal
laptop, a significant improvement with respect to the simulation of the full PDE
\eqref{eq:GPE} for very small $\varepsilon$, which can take several days on a
small sized cluster for
small values of $\varepsilon$. However, the analytical framework
of the singular limit being valid as long as the vortices stay away from each
other, our method cannot be used to reproduce nonlinear effects such as
radiation and sound waves triggered by vortex collisions.
Finally, the main contributions of this paper can be summarized as follows:
\begin{itemize}
  \item We propose, and implement, a new method to approximate numerically the
    solution to \eqref{eq:GPE} in the regime of small, but finite,
    $\varepsilon$, with numerical evidence of its accuracy. The originality of
    this method is that the limiting parameter for the numerical discretization
    is no longer the size of vortices $\varepsilon$ but the inter-vortices
    distance, as shown in Theorem~\ref{thm:errorestimate}.
  \item We also propose an
    efficient and cheap way to solve the Hamiltonian ODE
    \eqref{eq:Effective-equation} by using harmonic polynomials to evaluate the
    boundary terms in the renormalized energy $W$ defined in
    \eqref{eq:renormalized}, which involve the resolution of a Laplace equation.
  \item We prove that our method is asymptotically exact when $\varepsilon\to0$
    together with the discretization parameters by deriving an \emph{a priori}
    bound on the supercurrents, using the results from \cite{jerrardRefinedJacobianEstimates2008} and classical elliptic estimates.
\end{itemize}

\subsection{Structure of the paper}

This paper is organized as follows. First, we conclude this introductory section with some
general notations. Then, in Section~\ref{sec:lite}, we present a short review of the
analytical results on the limit $\varepsilon\to0$, as well as the notion of
well-prepared initial conditions. In Section~\ref{sec:HD_sim}, we detail the
numerical resolution of the vortex trajectories as solutions of the Hamiltonian
ODE \eqref{eq:Effective-equation}, together with some numerical experiments.
Finally, in Section~\ref{sec:method}, we present our new method, based on
vortex tracking.
We also provide an \emph{a priori} bound of the error on the supercurrents in
terms of $\varepsilon$ and the discretization parameters, with a numerical
illustration of this bound.

\subsection{Notations}

In this paper and the numerical simulations we present, we consider a
domain $\Omega$ which is a bounded, simply connected, open subset of $\R^2$ with
smooth boundary. We denote by $\nu$ the (unit) outward normal to $\Omega$ and by $\tau$
the tangential vector such that the basis $(\nu,\tau)$ is direct.
Vortices are considered as point-like defects in the bounded domain $\Omega$:
we will use throughout the paper the notation $\bm{a}^0 = (a^0_j)_{j=1,\dots,N}
\in\Omega^{* N}$, ${\bm a}(t) = (a_j(t))_{j=1,\dots,N}\in\Omega^{* N}$
and $d = (d_j)_{j=1,\dots,N} \in
\set{\pm1}^N$ to describe the positions (at time $t=0$ and $t>0$) and the
degrees of $N$ vortices. Here, $\Omega^{* N} = \set{\bm{a} =
  (a_j)_{j=1,\dots,N}\in\Omega^N,\ a_i \neq
  a_j \text{ for } 1\leq i\neq j\leq N}$.
Moreover, time-dependent wave functions will be denoted by $\psi$ or $\psi(t):\Omega\to\C$ for
their evaluation at time $t$ while $u_\varepsilon$ and $u^*$ are used to denote
respectively minimizers of the Ginzburg--Landau energy $E_\varepsilon$ and
canonical harmonic maps.

We will often use the notation $A \lesssim B$ to
indicate the existence of a constant $C>0$ such that $A \leq C B$. If the
constant $C$ depends on some additional parameter (\eg, $\epsilon$), then we
use the notation $A \lesssim_{\epsilon} B$ to emphasize this dependence.

When no ambiguity occurs,
$x\in\R^2$ is identified with the complex number $x_1 + \i x_2\in\C$.
For $x\in\R^2$, $|x|$
is the Euclidean norm of $x$ and $\theta(x)$ is the unique element of
$(-\pi, \pi]$ representing the equivalence class of $\arg(x)$.
For $x = (x_1,x_2), y = (y_1,y_2)\in\R^2$, $x \times y =
x_1y_2 - x_2y_1$. If
$w:\R^2 \to \R^2$, we define $\nabla \times w = \partial_{x_1} w_2 -
\partial_{x_2} w_1$
and if $w:\R^2\to\R$, we define $\nabla\times w = (\partial_{x_2} w,
-\partial_{x_1} w)$.
Most of the analytical results we use to justify our numerical method deal
with physical quantities defined as the supercurrents $j(\psi) =
\frac1{2\i}(\wb\psi\nabla\psi-\psi\nabla\wb\psi)$ and the Jacobian
\begin{equation}\label{eq:jacobian_def}
  J\psi = \frac12\nabla\times j(\psi) = \det{\nabla\psi} = \begin{vmatrix}
    \partial_{x_1}\Re(\psi) & \partial_{x_2}\Re(\psi) \\\partial_{x_1}\Im(\psi) &
    \partial_{x_2}\Im(\psi) \end{vmatrix}.
\end{equation}
The appropriate space and norm to deal with the Jacobian $J\psi$ is the negative
Sobolev space $\dot\W^{-1,1}(\Omega)$, which is the dual of the space of
Lipschitz functions vanishing on $\partial\Omega$:
\begin{equation*}
  \norm{\mu}_{\dot\W^{-1,1}} = \sup\set{\int_\Omega\phi\d\mu,\
    \norm{\nabla\phi}_{L^\infty} \leq 1,\ \phi\in\W^{1,\infty}_0(\Omega)}.
\end{equation*}
This norm naturally appears in quantity of papers because of its interpretation
as the length of minimal connection~\cite{brezisHarmonicMapsDefects1986}:
if $\bm{a},\bm{b}\in\Omega^{* N}$ are such that $|a_j - b_j|\leq\rho(\bm{a})$ for all
$j$, then
\begin{equation*}
  \norm{\pi\sum_{j=1}^Nd_j(\delta_{a_j} - \delta_{b_j})}_{\dot\W^{-1,1}} = \pi\sum_{j=1}^N |d_j||a_j- b_j|,
\end{equation*}
where
\begin{equation}\label{eq:rho}
  \rho(\bm{a}) = \frac14\min\set{\min_{j\neq k}|a_j-a_k|, {\rm dist}(x_j,
    \partial\Omega)}.
\end{equation}
We will see that a wave function $\psi$ is made of \emph{almost vortices} if the
Jacobian concentrates around Dirac masses, making the interpretation of $J\psi$
as the vorticity natural. The $\dot\W^{-1,1}$ norm therefore
appears as the natural norm to measure the distance between $J\psi$ and the
point in $\Omega$ where the Dirac masses around which it concentrates are
localized.

We end this section by summarizing in Table~\ref{tab:notations} some notations used
repeatedly throughout the paper, as well as where they are defined.

\begin{table}[h!]
\begin{tabular}{@{}ccc@{}}
\toprule
\textbf{Notation}    & \textbf{Name}                                                                                                                          & \textbf{Definition}                                                                \\ \midrule
$E_\varepsilon$      & Ginzburg--Landau energy                                                                                                                & \eqref{eq:GP-energy}                                                               \\ \midrule
$\psi_\varepsilon$   & solution to GP equation                                                                                                                & \eqref{eq:GPE}                                                                     \\ \midrule
$j$                  & supercurrent                                                                                                                           & above \eqref{eq:jacobian_def}                                                      \\ \midrule
$J$                  & Jacobian                                                                                                                               & \eqref{eq:jacobian_def}                                                            \\ \midrule
$\bm a,\bm b\in\Omega^N$     & positions of $N$ vortices in $\Omega\subset\R^2$                                                                               & $\times$                                                                           \\ \midrule
$\rho(\bm a)$               & inter-vortices distance                                                                                                         & \eqref{eq:rho}                                                                     \\ \midrule
$W(\bm a,d)$         & \begin{tabular}[c]{@{}c@{}}renormalized energy\\ for singular points $\bm a$\\ with degrees $d$\end{tabular}                           & \begin{tabular}[c]{@{}c@{}}\eqref{eq:renormalized}\\ \eqref{eq:W_BBH}\end{tabular} \\ \midrule
$R(\cdot; \bm a,d)$  & \begin{tabular}[c]{@{}c@{}}boundary terms in the\\ renormalized energy\end{tabular}                                                     & \eqref{eq:R}                                                                       \\ \midrule
$u^*$                & canonical harmonic map                                                                                                                 & \eqref{eq:canon}                                                                   \\ \midrule
$H$                  & phase factor of $u^*$                                                                                                                  & \eqref{eq:H_rebuilt}                                                               \\ \midrule
$\psi_\varepsilon^*$ & \begin{tabular}[c]{@{}c@{}}approximation of $\psi_\varepsilon$\\ obtained by smoothing out $u^*$ with\\ the method we present\end{tabular} & \eqref{eq:smoothed_approx}                                                     \\ \bottomrule
\end{tabular}
\caption{Main objects used throughout the paper.}
\label{tab:notations}
\end{table}

\section{Results on Gross--Pitaevskii vortices and their dynamics}\label{sec:lite}

\subsection{Properties of canonical harmonic maps and the renormalized energy}

For $\bm a\in\Omega^{* N}$ and $d~\in~\set{\pm 1}^N$ given, we define the
canonical harmonic map $u^*(\bm a,d) \in\W^{1,p}(\Omega,\S^1)$, $p<2$,
with Neumann boundary conditions as the solution to
\[
  \nabla \cdot j(u^*) = 0,\quad \nabla \times j(u^*) =
  2\pi\sum_{j=1}^N{d_j\delta_{a_j}},\quad \nu\cdot j(u^*) = 0 \text{ on } \partial\Omega.
\]
These conditions uniquely determine the supercurrents $j(u^*)$, which in turn
determines $u^*$ only up to a constant phase factor, see
\cite[Chapter 1]{bethuelGinzburgLandauVortices1994}. The typical form for $u^*$
is then given by
\begin{equation}\label{eq:canon}
  u^*(x) = u^*(x; \bm{a}, d) \coloneqq
  e^{\i H(x)}\prod_{j=1}^N \prt{\frac{x - a_j}{|x - a_j|}}^{d_j},
\end{equation}
where $H:\R^2\to\R^2$ is some harmonic function such that $u^*$ satisfies
homogeneous Neumann boundary conditions. Introducing the solution $R(\cdot;\bm
a,d)$ to
\begin{equation}\label{eq:R}
  \begin{cases}
    \ds\Delta R = 0 \quad\text{in }\Omega,\\
    \ds R = - \sum_{j=1}^N d_j \ln|x - a_j| \quad\text{on }\partial\Omega,
  \end{cases}
\end{equation}
it is easy to check that $j(u^*(\bm a,d)) = - \nabla \times G$ where
\begin{equation}\label{eq:j_G}
  G(x; \bm a, d) = \sum_{j=1}^N d_j\ln|x-a_j| + R(x;\bm a,d).
\end{equation}
We now recall the formulation of the renormalized energy introduced in
\cite{bethuelGinzburgLandauVortices1994} as
\begin{equation}\label{eq:W_BBH}
  W(\bm a,d) = - \pi \sum_{1\leq i\neq j\leq N} d_id_j\ln|a_i-a_j|
  - \pi \sum_{j=1}^N d_jR(a_j;\bm a, d).
\end{equation}

Finally, we introduce the following minimization
problem, which considers only one vortex of degree $+1$ inside the ball
$B_{r}(0)$ of radius $r$ centered at $0$:
\begin{equation}\label{eq:inf_ball}
  I(r, \varepsilon) = \inf\set{\int_{B_{r}(0)} e_\varepsilon(u);\ u\in H^1(B_{r}(0), \C),\ u =
    e^{\\i \theta} \text{ on } \partial B_{r}(0)}.
\end{equation}
Let
\begin{equation}\label{eq:gamma}
  \gamma = \lim_{r\to\infty}\prt{I(r,\varepsilon) - \pi\ln{\frac{r}\varepsilon}}.
\end{equation}
It is known that $\gamma$ exists, is finite and independent of $\varepsilon$,
see  \cite{bethuelGinzburgLandauVortices1994}. In addition, it is proved in
\cite[Lemma 6.8]{jerrardRefinedJacobianEstimates2007} that
\begin{equation*}
  \gamma - \prt{I(r,\varepsilon) - \pi\ln\frac{r}\varepsilon} =
  O\prt{\prt{\frac \varepsilon {r}}^2}.
\end{equation*}
For fixed $\varepsilon>0$ and given $\bm{a}\in\Omega^{* N}$, $d\in\set{\pm1}^N$, let
\begin{equation*}
  W_\varepsilon(\bm{a},d) = N\prt{\gamma + \pi\ln\frac1\varepsilon} + W(\bm{a},d).
\end{equation*}
$W_\varepsilon$ defines an approximation of $E_\varepsilon$ for
a wave function made of vortices of degrees $d$ at positions $\bm{a}$. In the
next section, we recall how the renormalized energy $W$ and the canonical
harmonic maps can be used to describe the vortex motion.

\subsection{Asymptotic dynamics of vortices}

Let $(\psi_\varepsilon^0)_{\varepsilon>0}$ be a family of initial
conditions for which the vorticity converges to a sum of Dirac masses at
distinct points $\bm a^0=(a_j^0)_{j=1,\dots,N}\in\Omega^{* N}$ that correspond to the
initial positions of the vortices. These vortices are associated with degrees
$d_j = \pm 1$ and the convergence reads, when $\varepsilon\to0$,
\begin{equation}\label{eq:init_cvg}
  J\psi_\varepsilon^0 \to \pi\sum_{j=1}^N d_j\delta_{a_j^0},
\end{equation}
weakly in the sense of measures. If in addition the energy
$E_\varepsilon(\psi_\varepsilon^0)$ is asymptotically optimal as
$\varepsilon\to0$, in the sense that, when $\varepsilon\to0$,
\begin{equation}\label{eq:init_nrj}
  E_\varepsilon(\psi_\varepsilon^0) = W_\varepsilon(\bm{a}^0,d) + o(1),
\end{equation}
then the result is that
\begin{equation*}
  J{\psi_\varepsilon(t)} \to \pi\sum_{j=1}^N d_j\delta_{a_j(t)},
\end{equation*}
where the points $\bm a(t)=(a_j(t))_{j=1,\dots,N}$ evolve according to the
Hamiltonian ODE \eqref{eq:Effective-equation}
\cite{collianderVortexDynamicsGinzburgLandauSchrodinger1998,
  linIncompressibleFluidLimit1999}.
Convergence results are also obtained for the supercurrents
$j(\psi_\varepsilon(t))$ in $L^p(\Omega,\C)$ and for the wave
function $\psi_\varepsilon(t)$ in $\W^{1,p}(\Omega,\C)$ for $p<2$ at time $t>0$. In
particular, we will need later that, under
\eqref{eq:init_cvg}--\eqref{eq:init_nrj},
\begin{equation} \label{eq:L2_cvg}
  \psi_\varepsilon(t) \to u^*(\bm{a}(t),d) \quad\text{in }
  W^{1,p}(\Omega,\C),
  \qquad \text{and} \qquad j(\psi_\varepsilon(t)) \to j(u^*(\bm
  a(t), d)) \quad\text{in } L^p(\Omega,\C),\qquad p<2.
\end{equation}
Finally, note that, while \eqref{eq:GPE} is globally (in time) well-posed for
any $\varepsilon >0$, all
these results on the asymptotic ODE system are valid up to the
first vortex collision, which might happen in finite time.

These asymptotic results only describe the limiting behavior of solutions obtained
from a family of well-prepared initial conditions, but give no quantitative
estimates on the solution to \eqref{eq:GPE} for small, but fixed, $\varepsilon$.
A significant improvement has been obtained in
\cite{jerrardRefinedJacobianEstimates2008}. In this article, the authors
prove in particular that these results hold for small but finite $\varepsilon$,
rather than in the limit $\varepsilon\to0$, given that the initial conditions
are well-prepared in the following sense.

\begin{definition}\label{def:WP}
  A family of initial conditions $(\psi_\varepsilon^0)_{\varepsilon>0}$ is said to be \emph{well-prepared} if
  it satisfies the following assumptions, for some constant $C>0$, $0<\alpha<1$ and
  $\varepsilon$ small enough:
  \begin{enumerate}
    \item there exist $N$ vortices with positions $\bm{a}^0 =
      (a_j^0)_{j=1,\dots,N}\in\Omega^{* N}$ and degrees $d =
      (d_j)_{j=1,\dots,N} \in \set{\pm 1}^N$ such that
      \begin{equation*}
        \norm{J{\psi_\varepsilon^0} - \pi\sum_{j=1}^N
          d_j\delta_{a_j^0}}_{\dot{\W}^{-1,1}} \lesssim\varepsilon^{\alpha},
      \end{equation*}
      and the vortices are distant enough;
    \item the energy of $\psi_\varepsilon^0$ is close to be optimal:
      \begin{equation*}
        E_\varepsilon(\psi_\varepsilon^0) \leq W_{\varepsilon}(\bm{a}^0, d) +
        C\varepsilon^{\frac12}.
      \end{equation*}
  \end{enumerate}
\end{definition}

The main result of \cite{jerrardRefinedJacobianEstimates2008}, in a form
simplified for our purpose, is the following theorem.

\begin{theorem}[{\cite[Theorem
    1]{jerrardRefinedJacobianEstimates2008}}]\label{thm:JS}
  Let $\psi_\varepsilon$ solve \eqref{eq:GPE} with well-prepared initial
  conditions, in the sense of Definition~\ref{def:WP} for some initial vortices with
  positions $\bm{a}^0 = (a_j^0)_{j=1,\dots,N}$ and degrees
  $(d_j)_{j=1,\dots,N}$. Then, there exists
  $\varepsilon_0$, $0~<~\beta$, $\gamma<1$ and $C>0$, depending only on $\Omega$ and the constants in
  $Definition~\ref{def:WP}$, such that, for any $\varepsilon < \varepsilon_0$,
  well-preparedness is preserved along time. In particular,
  \begin{equation}\label{eq:jac_estimate}
    \norm{J{\psi_\varepsilon(t)} - \pi\sum_{j=1}^N
      d_j\delta_{a_j(t)}}_{\dot{\W}^{-1,1}} \lesssim\varepsilon^{\beta},
    \quad\text{and}\quad
    \norm{j(\psi_\varepsilon(t)) - j(u^*(\bm a(t), d))}_{L^{\frac43}} \lesssim
    \varepsilon^\gamma
  \end{equation}
  for any $0\leq t\leq \tau_{\varepsilon,\bm{a}^0}$, where ${\bm a}(t) = (a_j(t))_{j=1,\dots,N}$
  solves the Hamiltonian ODE \eqref{eq:Effective-equation} and $\tau_{\varepsilon,\bm{a}^0}$
  depends on $\varepsilon$ and $\bm{a}^0$ .
\end{theorem}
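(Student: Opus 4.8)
The plan is to follow the energy-driven vortex-tracking strategy behind refined Jacobian estimates, combining conservation of energy for the flow with the Hamiltonian conservation of $W$ along the limiting ODE \eqref{eq:Effective-equation}, and running a continuation (bootstrap) argument valid up to the first collision. First I would record the two conservation laws. The Neumann flow \eqref{eq:GPE} conserves the energy, so $E_\varepsilon(\psi_\varepsilon(t)) = E_\varepsilon(\psi_\varepsilon^0)$, while $W$ is a first integral of \eqref{eq:Effective-equation}: since $\J$ is antisymmetric one has $v\cdot\J v = 0$, hence $\tfrac{\d}{\d t}W(\bm a(t),d)=0$ and $W_\varepsilon(\bm a(t),d)=W_\varepsilon(\bm a^0,d)$ for all $t$. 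Selecting the vortex locations $\bm b(t)$ of $\psi_\varepsilon(t)$ as the concentration points of a suitably mollified Jacobian, the matching lower bound (the $\Gamma$-convergence lower bound underlying \eqref{eq:expansion}, with explicit error) gives $E_\varepsilon(\psi_\varepsilon(t)) \ge W_\varepsilon(\bm b(t),d) - o(1)$. Together with well-preparedness and the two conservation laws this caps the renormalized energy along the tracked path,
\begin{equation*}
  W(\bm b(t),d) \le W(\bm a(t),d) + C\varepsilon^{1/2} + o(1),
\end{equation*}
so long as the $b_j(t)$ stay mutually distinct and away from $\partial\Omega$.

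Next I would turn this energy information into geometric control. The refined Jacobian estimate upgrades smallness of the excess $E_\varepsilon(\psi_\varepsilon(t)) - W_\varepsilon(\bm b(t),d)$ into the quantitative concentration $\normW{J\psi_\varepsilon(t) - \pi\sum_j d_j\delta_{b_j(t)}} \lesssim \varepsilon^\beta$. To compare $\bm b(t)$ with the ODE solution $\bm a(t)$ I would use the Jerrard--Soner product estimate to control $\partial_t J\psi_\varepsilon$ in terms of the supercurrent $j(\psi_\varepsilon)$ and the energy density; extracting the leading-order flux identifies the tracked velocity $\dot b_j$ with the Hamiltonian field $-\tfrac1\pi d_j\J\nabla_{a_j}W(\bm b,d)$ up to a remainder controlled by a power of the excess energy. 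This yields a differential inequality for $\abs{\bm b(t)-\bm a(t)}$ with forcing of size $\varepsilon^\beta$ and a coefficient governed by the Lipschitz norm of $\nabla_{\bm a}W$ on the set where the vortices are separated.

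I would then close by Gronwall. As long as $t\le\tau_{\varepsilon,\bm a^0}$---so that $\rho(\bm a(t))$ stays bounded below and the tracking stays valid---the Gronwall constant is finite and $\abs{\bm b(t)-\bm a(t)}\lesssim\varepsilon^\beta$; combined with the concentration bound this gives the first estimate in \eqref{eq:jac_estimate}. For the supercurrent I would use a Hodge-type decomposition: $j(\psi_\varepsilon(t))-j(u^*(\bm a(t),d))$ has curl and divergence equal to the (now controlled) vorticity and compressibility defects, so elliptic estimates in the scaling-critical range below $L^2$---where the singular field $j(u^*)\sim\abs{x-a_j}^{-1}$ lives---convert the defect bounds into the $L^{4/3}$ estimate with exponent $\gamma$.

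The main obstacle is the feedback loop between energy and position control: the Jacobian estimate is only quantitatively useful once the vortices are known to be separated, yet that separation is exactly what the dynamics must be shown to preserve, so the mollification scale, the excess-energy bound and the lower bound on $\rho(\bm a(t))$ must be kept mutually compatible throughout a continuation argument running up to $\tau_{\varepsilon,\bm a^0}$. The genuinely delicate point is making the product estimate quantitative enough to read off the velocity with an $O(\varepsilon^\beta)$ error for small but finite $\varepsilon$---rather than only in the limit $\varepsilon\to0$---which is precisely where the refined, as opposed to classical, Jacobian estimates are indispensable.
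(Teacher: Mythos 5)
This statement is not proved in the paper at all: it is imported verbatim from the literature, as the theorem header itself indicates (\cite[Theorem 1]{jerrardRefinedJacobianEstimates2008}), and the surrounding text explicitly defers to Jerrard--Spirn for the proof, only simplifying the statement (dropping the energy estimates, fixing the exponents $\beta,\gamma$). So there is no internal proof to compare against; the paper's ``proof'' is the citation. What you have written is therefore an attempt to reconstruct, blind, the argument of the cited reference. At the level of strategy your reconstruction is faithful: conservation of $E_\varepsilon$ under \eqref{eq:GPE} and of $W$ along \eqref{eq:Effective-equation}, propagation of well-preparedness via excess-energy control, quantitative concentration of the Jacobian, identification of the tracked vortex velocity with the Hamiltonian field, a Gronwall/continuation argument up to the collision time, and a Hodge/elliptic argument for the $L^{4/3}$ supercurrent bound -- this is indeed the architecture of the Jerrard--Spirn proof.

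The genuine gap is that, as a proof, the proposal is circular, or at best an annotated table of contents of the cited work. Every quantitatively hard step is invoked as a named black box: the $\Gamma$-convergence lower bound \emph{with explicit error}, the static ``refined Jacobian estimate'' that upgrades excess-energy smallness to $\normW{J\psi_\varepsilon(t)-\pi\sum_j d_j\delta_{b_j(t)}}\lesssim\varepsilon^\beta$, and the quantitative product estimate that reads off $\dot b_j$ with an $O(\varepsilon^\beta)$ error at finite $\varepsilon$. But these black boxes \emph{are} the content of the theorem and of its companion paper (Jerrard--Spirn 2007/2008); they are not available as prior results inside a blind proof, and proving them occupies dozens of pages of hard analysis (vortex-ball constructions, localized lower bounds, stress-energy identities). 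You acknowledge this yourself in the closing paragraph, which correctly identifies the quantitative product estimate and the compatibility of mollification scale, excess energy, and separation as ``the genuinely delicate point'' -- but identifying the delicate point is not the same as resolving it. Within the economy of this paper, the correct treatment of this statement is exactly what the authors do: cite it, and build the new contributions (Lemma~\ref{lem:ODEerror}, Theorem~\ref{thm:errorestimate}) on top of it.
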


The theorem proved in \cite{jerrardRefinedJacobianEstimates2008} also contains
other convergence estimates on the energy at any time $t$.
We omit them for the sake
of clarity, as the key ingredients for our numerical method are the Jacobian and
supercurrents estimates \eqref{eq:jac_estimate}.
Regarding the time validity, these estimates are valid as long as $
\tau_{\varepsilon,\bm{a}^0} / \rho_{\tau_{\varepsilon,\bm{a}^0}}^2 \lesssim \log
1/\varepsilon$, where $\rho_{\tau_{\varepsilon,\bm{a}^0}}$ denotes the minimum
distance between two vortices and between the vortices and the boundary.
In particular, if the Hamiltonian trajectory is periodic (which may happen for certain initial data) or the vortices all have the same degree, collision never happens and the approximation is valid up to time $\tau_{\varepsilon,\bm{a}^0} \sim \log 1/\varepsilon$.
We end this section by recalling that all the powers of $\varepsilon$ that appear here
are a bit arbitrary (the original values of $\alpha$, $\beta$ and $\gamma$
are respectively $9/10$, $1/4$ and
$1/9$) and have no reason to be optimal, see
\cite[Theorem 1]{jerrardRefinedJacobianEstimates2008}.

\subsection{Well-prepared initial conditions}\label{sec:WP}

We now focus on the (numerical) construction of a well-prepared family
$(\psi_\varepsilon^0)_{\varepsilon>0}$ in
the sense of Definition~\ref{def:WP}, based on the minimization problem
\eqref{eq:inf_ball}. It is well-known (see
\cite[Corollary 1.5]{pacardLinearNonlinearAspects2000}) that, for $\varepsilon$
small enough, there is a unique minimizer $\phi_{\varepsilon,r_0}$ of
\eqref{eq:inf_ball}: this minimizer is radially symmetric and reads
\begin{equation}\label{eq:ansatz}
  \phi_{\varepsilon,r_0}(x) = f_{\varepsilon,r_0}(|x|)e^{\i
    \theta(x)},
\end{equation}
where $f_{\varepsilon,r_0}$ satisfies the ODE
\begin{equation}\label{eq:ODE}
  \frac1r\prt{rf_{\varepsilon,r_0}'(r)}' - \frac{1}{r^2}f_{\varepsilon,r_0}(r) +
  \frac1{\varepsilon^2}\prt{1-\abs{f_{\varepsilon,r_0}(r)}^2}f_{\varepsilon,r_0}(r) = 0,
\end{equation}
together with the boundary condition
\begin{equation}\label{eq:BVP}
  f_{\varepsilon,r_0}(0) = 0 \quad\text{and}\quad f_{\varepsilon,r_0}(r_0) = 1.
\end{equation}
The 1D boundary value problem \eqref{eq:ODE}--\eqref{eq:BVP} is studied for
instance in \cite{herveEtudeQualitativeSolutions1994} in the case $r_0=\infty$.
For finite $r_0$, it can be solved numerically with very high precision (\eg
using a nonlinear finite element solver), even for $\varepsilon$ as small as
$10^{-3}$, see Figure~\ref{fig:feps}. Finally, note that, by a scaling
argument, $f_{\varepsilon,r_0}$ only depends on the ratio $r_0/\varepsilon$, so
that we may write only $f_\varepsilon$.

\begin{figure}[h!]
  \centering
  \includegraphics[width=0.5\linewidth]{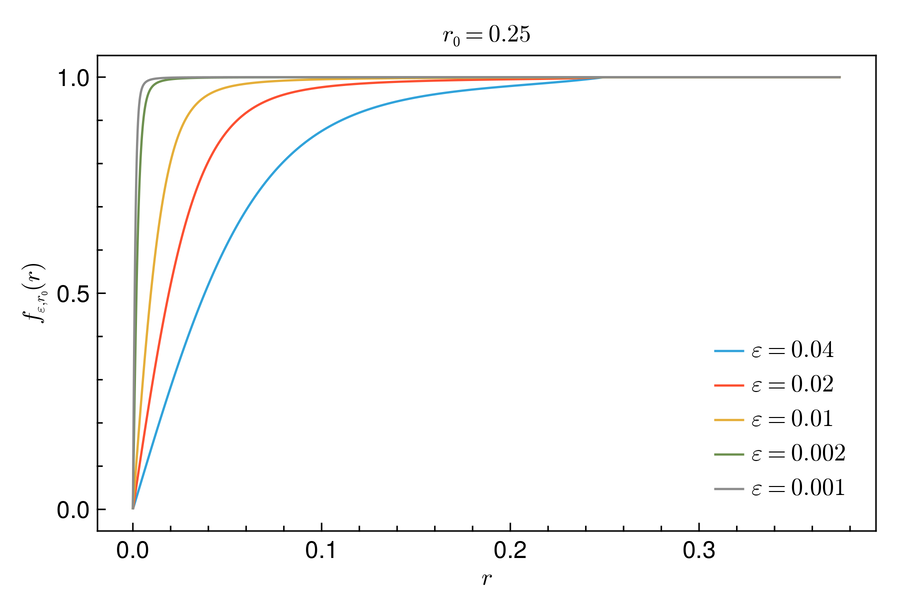}
  \caption{Numerical approximation of $f_{\varepsilon,r_0}$.}
  \label{fig:feps}
\end{figure}

Starting from the localized approximation $\phi_{\varepsilon}$ of a single vortex,
one can then build the following initial condition for \eqref{eq:GPE}: given
vortex positions $\bm{a}^0 = (a_j^0)_{j=1,\dots,N}\in\Omega^{* N}$ with degrees
$d = (d_j)_{j=1,\dots,N}\in\set{\pm1}^N$, we consider
\begin{equation}\label{eq:init}
  \psi_{\varepsilon}^*(x; \bm{a}^0, d) =
  u^*(x; \bm{a}^0, d) \prod_{j=1}^N f_{\varepsilon}(|x -
  a_j^0|), \quad x\in\overline\Omega,
\end{equation}
where $u^*$ is the canonical harmonic map defined in \eqref{eq:canon}, with
phase factor $H$ computed as in \eqref{eq:H_rebuilt}.
This numerical strategy can be seen as an operator from the manifold
$\{u^*(\bm{a},d),\ \bm{a}\in\Omega^{* N},\ d\in\set{\pm1}^N\}$ to $H^1(\Omega,\C)$
which smoothes out the canonical harmonic map $u^*(\bm{a},d)$ at the
singularities locations.
Moreover, such initial conditions are also used for instance in
\cite{baoNumericalStudyQuantized2013,baoNumericalStudyQuantized2014}
and are well-prepared in the sense of Definition~\ref{def:WP}, as
shown in \cite[Lemma 14]{jerrardRefinedJacobianEstimates2008} in the case of
Neumann boundary conditions and that we recall here.

\begin{lemma}[{\cite[Lemma 14]{jerrardRefinedJacobianEstimates2008}}]\label{lem:WP}
  For any $\bm{a}\in\Omega^{* N}$ and $d\in\set{\pm 1}^N$ and for $r_0$ small enough, there
  exists some constant $C>0$ such that the function
  $\psi^* = \psi_{\varepsilon}^*(\bm{a},d)$ constructed above satisfies
  \begin{equation}\label{eq:WP_nrj}
    E_\varepsilon(\psi_{\varepsilon}^*) \leq W_{\varepsilon}(\bm{a},d)
    + CN\prt{\frac{\varepsilon}{r_0}}^2
  \end{equation}
  and
  \begin{equation}\label{eq:WP_jac}
    \norm{J{\psi_{\varepsilon}^*} - \pi\sum_{j=1}^N
      d_j\delta_{a_j}}_{\dot{\W}^{-1,1}} \lesssim \varepsilon.
  \end{equation}
  In particular, the assumptions from Definition~\ref{def:WP} are satisfied.
\end{lemma}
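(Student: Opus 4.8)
The plan is to prove the energy bound \eqref{eq:WP_nrj} and the Jacobian bound \eqref{eq:WP_jac} separately, using throughout that $f_\varepsilon \equiv 1$ outside radius $r_0$, so that $\psi_\varepsilon^* = u^*$ away from the vortices. Fix $r_0 < \rho(\bm a)$ so that the cores $B_{r_0}(a_j)$ are pairwise disjoint and compactly contained in $\Omega$, and set $\Omega_{r_0} = \Omega \setminus \bigcup_j \overline{B_{r_0}(a_j)}$ and $F = \prod_k f_\varepsilon(\abs{x-a_k})$. In a core I write $r = \abs{x-a_j}$ and introduce the harmonic companion $g_j = G - d_j\ln\abs{x-a_j}$, which by \eqref{eq:j_G} is smooth and harmonic on $B_{r_0}(a_j)$. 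Since $\abs{u^*}=1$ forces $\Re(\overline{u^*}\nabla u^*)=0$, the density decouples as $e_\varepsilon(\psi_\varepsilon^*) = \tfrac12\abs{\nabla F}^2 + \tfrac12 F^2\abs{\nabla u^*}^2 + \tfrac1{4\varepsilon^2}(1-F^2)^2$, giving
\[
  E_\varepsilon(\psi_\varepsilon^*) = \tfrac12\int_{\Omega_{r_0}}\abs{\nabla u^*}^2 + \sum_{j=1}^N \int_{B_{r_0}(a_j)}\prt{\tfrac12 (f_\varepsilon')^2 + \tfrac12 f_\varepsilon^2\abs{\nabla u^*}^2 + \tfrac1{4\varepsilon^2}(1-f_\varepsilon^2)^2}.
\]
On each core I expand $\abs{\nabla u^*}^2 = \abs{\nabla G}^2 = r^{-2} + 2 d_j r^{-1}\partial_r g_j + \abs{\nabla g_j}^2$: the $r^{-2}$ term recombines with $\tfrac12(f_\varepsilon')^2$ and the potential into the radial single-vortex functional, equal to $I(r_0,\varepsilon)$ by minimality of $f_\varepsilon$ in \eqref{eq:inf_ball}; the middle term integrates to zero because $\int_0^{2\pi}\partial_r g_j\,\d\theta = 0$ (mean value property of $g_j$). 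Thus each core contributes $I(r_0,\varepsilon) + \tfrac12\int_{B_{r_0}(a_j)} f_\varepsilon^2\abs{\nabla g_j}^2$.

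The crux is then the exact identity
\[
  \tfrac12\int_{\Omega_{r_0}}\abs{\nabla u^*}^2 + \tfrac12\sum_{j=1}^N\int_{B_{r_0}(a_j)}\abs{\nabla g_j}^2 = \pi N\ln\tfrac1{r_0} + W(\bm a, d),
\]
which I would obtain by integrating $\tfrac12\int\abs{\nabla G}^2$ by parts over $\Omega_{r_0}$ (using harmonicity of $G$ and that $G=0$ on $\partial\Omega$ by \eqref{eq:R}, so the outer boundary term vanishes): the circle integrals on $\partial B_{r_0}(a_j)$ yield $\pi N\ln\tfrac1{r_0} - \pi\sum_j d_j g_j(a_j)$, the second sum being precisely $W(\bm a,d)$ as in \eqref{eq:W_BBH}, while the leftover $r_0$-dependent circle term is cancelled exactly by the companion energies $\tfrac12\int_{B_{r_0}(a_j)}\abs{\nabla g_j}^2$ after one further integration by parts (this is the mechanism of \cite{bethuelGinzburgLandauVortices1994}). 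Writing $f_\varepsilon^2 = 1-(1-f_\varepsilon^2)$ in the core companion terms and combining with this identity,
\[
  E_\varepsilon(\psi_\varepsilon^*) = \pi N\ln\tfrac1{r_0} + W(\bm a,d) + N\,I(r_0,\varepsilon) - \tfrac12\sum_{j=1}^N\int_{B_{r_0}(a_j)}(1-f_\varepsilon^2)\abs{\nabla g_j}^2.
\]
Inserting $I(r_0,\varepsilon) = \pi\ln\tfrac{r_0}\varepsilon + \gamma + O\!\prt{(\varepsilon/r_0)^2}$ from \eqref{eq:gamma} and \cite[Lemma 6.8]{jerrardRefinedJacobianEstimates2007}, the $\ln r_0$ terms cancel and reconstruct $W_\varepsilon(\bm a,d) = N(\gamma+\pi\ln\tfrac1\varepsilon)+W(\bm a,d)$; since the final sum is nonnegative, dropping it only strengthens the upper bound, and \eqref{eq:WP_nrj} follows with error $CN(\varepsilon/r_0)^2$.

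For the Jacobian bound, note that $f_\varepsilon(0)=0$ removes the singularities, so $\psi_\varepsilon^*$ is smooth and $J\psi_\varepsilon^* = \det\nabla\psi_\varepsilon^* \in L^1(\Omega)$ is supported in $\bigcup_j B_{r_0}(a_j)$ (it equals $Ju^*=0$ on $\Omega_{r_0}$). From $j(\psi_\varepsilon^*)=F^2 j(u^*)$ and Stokes' theorem, each core carries total mass $\int_{B_{r_0}(a_j)} J\psi_\varepsilon^* = \tfrac12\oint_{\partial B_{r_0}(a_j)} j(u^*)\cdot\tau = \pi d_j$, since $F\equiv1$ on the circle and $\nabla\times j(u^*) = 2\pi\sum_k d_k\delta_{a_k}$. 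Testing against $\phi\in\W^{1,\infty}_0(\Omega)$ with $\norm{\nabla\phi}_{L^\infty}\le1$ and subtracting $\phi(a_j)$,
\[
  \int_\Omega \phi\,\prt{J\psi_\varepsilon^* - \pi\sum_{j} d_j\delta_{a_j}} = \sum_{j=1}^N \int_{B_{r_0}(a_j)} \prt{\phi - \phi(a_j)}\,J\psi_\varepsilon^*,
\]
and $\abs{\phi-\phi(a_j)}\le r$. Using $J\psi_\varepsilon^* = \tfrac12\nabla(F^2)\times j(u^*)$ with $j(u^*)=d_j r^{-1}\hat\theta + O(1)$, the leading density is $d_j r^{-1} f_\varepsilon f_\varepsilon'$, so each core is controlled by $\int_0^{r_0} r\,\abs{f_\varepsilon f_\varepsilon'}\,\d r \lesssim \int_0^{r_0}(1-f_\varepsilon^2)\,\d r = O(\varepsilon)$, using the far-field decay $1-f_\varepsilon^2 = O(\varepsilon^2/r^2)$; the bounded part of $j(u^*)$ gives an even smaller contribution. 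Summing over the $N$ cores and taking the supremum over $\phi$ yields \eqref{eq:WP_jac}.

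The Jacobian bound is essentially a one-dimensional estimate on the profile. The delicate part, and the main obstacle, is the energy estimate — in particular the exact cancellation identity: a naive core/exterior split leaves an $O(Nr_0^2)$ remainder incompatible with the stated $O(N(\varepsilon/r_0)^2)$, and it is only the harmonicity of the companions $g_j$ (through the mean value property and the matching of the two boundary integrals, together with $G=0$ on $\partial\Omega$) that eliminates every $r_0$-dependent term beyond the logarithm. Getting this bookkeeping right, and checking that the circle integrals indeed reconstruct the renormalized energy $W$ of \eqref{eq:W_BBH}, is where the real work lies.
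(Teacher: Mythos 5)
Your proof is correct, but be aware that the paper contains no proof to compare it with: this lemma is imported verbatim from \cite[Lemma 14]{jerrardRefinedJacobianEstimates2008}, and the paper simply cites that result. What you have produced is a self-contained derivation along the classical lines of \cite{bethuelGinzburgLandauVortices1994} and the cited source, and the key steps all check out. For \eqref{eq:WP_nrj}: the decoupling $e_\varepsilon(\psi_\varepsilon^*)=\frac12\abs{\nabla F}^2+\frac12 F^2\abs{\nabla u^*}^2+\frac1{4\varepsilon^2}(1-F^2)^2$ is valid because $\Re(\wb{u^*}\nabla u^*)=\frac12\nabla\abs{u^*}^2=0$; the cross term vanishes by the mean-value property of the harmonic companions $g_j$; and your crux identity is exact, since integrating by parts on $\Omega_{r_0}$ with $G=0$ on $\partial\Omega$ gives
\begin{equation*}
\int_{\Omega_{r_0}}\abs{\nabla G}^2=2\pi N\ln\tfrac1{r_0}+2W(\bm a,d)-\sum_{j=1}^N\int_{B_{r_0}(a_j)}\abs{\nabla g_j}^2,
\end{equation*}
using $\int_{\partial B_{r_0}(a_j)}\partial_r g_j=0$, $\tfrac1{r_0}\int_{\partial B_{r_0}(a_j)}g_j=2\pi g_j(a_j)$, $\int_{\partial B_{r_0}(a_j)}g_j\partial_r g_j=\int_{B_{r_0}(a_j)}\abs{\nabla g_j}^2$, and $-\pi\sum_j d_jg_j(a_j)=W(\bm a,d)$ by \eqref{eq:W_BBH}. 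Together with $I(r_0,\varepsilon)=\pi\ln(r_0/\varepsilon)+\gamma+O((\varepsilon/r_0)^2)$ from \cite{jerrardRefinedJacobianEstimates2007} and $0\le f_\varepsilon\le 1$ (so the discarded term has a definite sign), this yields \eqref{eq:WP_nrj}. For \eqref{eq:WP_jac}: $J\psi_\varepsilon^*=\frac12\nabla(F^2)\times j(u^*)$ because the distributional part $F^2\,\nabla\times j(u^*)$ is killed by $F^2(a_j)=0$, the per-core mass $\pi d_j$ follows from Stokes, and the moment bound reduces to $\int_0^{r_0}rf_\varepsilon f_\varepsilon'\,\d r=\frac12\int_0^{r_0}(1-f_\varepsilon^2)\,\d r\lesssim\varepsilon$. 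Two points deserve explicit justification to be airtight: (i) identifying the radial core integral with $I(r_0,\varepsilon)$ uses that the minimizer of \eqref{eq:inf_ball} is exactly the radial profile $f_{\varepsilon,r_0}(\abs{x})e^{\i\theta(x)}$, valid for $\varepsilon/r_0$ small by \cite[Corollary 1.5]{pacardLinearNonlinearAspects2000}, as quoted in the paper; (ii) removing the absolute value in $\int_0^{r_0}r\abs{f_\varepsilon f_\varepsilon'}\,\d r$ uses the standard monotonicity $f_\varepsilon'\ge0$ of that minimizer. Neither is a substantive gap. Finally, the closing assertion of the lemma (that Definition~\ref{def:WP} holds) merits a one-line remark, namely that for fixed $r_0$ and $\varepsilon$ small one has $CN(\varepsilon/r_0)^2\le C'\varepsilon^{1/2}$ and $\varepsilon\le\varepsilon^{\alpha}$.
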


We end this section by stating another important property of such well-prepared
initial conditions:
\begin{proposition}\label{prop:L2_WP}
  Let $\psi_{\varepsilon}^*(\bm a, d)$ be as in Lemma~\ref{lem:WP} for
  $\bm{a}\in\Omega^{* N}$, $d\in\set{\pm1}^N$ and $r_0$ small enough.
  Then,
  \begin{align}
    &\norm{\psi_{\varepsilon}^*(\bm{a},d) - u^*(\bm{a},d)}_{L^2(\Omega,\C)} \lesssim
    \varepsilon. \\
    &\norm{j\bigr(\psi_{\varepsilon}^*(\bm{a},d)\bigr) -
      j\bigr(u^*(\bm{a},d)\bigr)}_{L^p(\Omega)} \lesssim_p
    \epsilon^{\frac{2}{p} - 1} \quad \mbox{for $1\leq p < 2$.} \label{eq:jest}
  \end{align}
\end{proposition}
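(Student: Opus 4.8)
The plan is to exploit the factorized structure of the approximation. Writing $F(x) = \prod_{j=1}^N f_\varepsilon(|x-a_j|)$, we have $\psi_\varepsilon^*(\bm a,d) = F\,u^*(\bm a,d)$ with $0\le F\le 1$ and $|u^*|=1$. For $r_0$ small enough the balls $B_{r_0}(a_j)$ are pairwise disjoint and contained in $\Omega$; since $f_\varepsilon\equiv 1$ outside $B_{r_0}(0)$, the factor $F$ equals $1$ off $\bigcup_j B_{r_0}(a_j)$ and reduces to $F=f_\varepsilon(|x-a_j|)$ on each $B_{r_0}(a_j)$. The only analytic input I would need about the profile is its pointwise decay toward $1$, namely $0\le 1-f_\varepsilon(r)\lesssim \min\set{1,(\varepsilon/r)^2}$, which is the classical behavior of the Ginzburg--Landau vortex profile (see \cite{herveEtudeQualitativeSolutions1994}) and is consistent with the energy expansion \eqref{eq:gamma}. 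With this in hand, both bounds reduce to one-dimensional radial integrals.

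For the $L^2$ estimate, since $|u^*|=1$ we have $|\psi_\varepsilon^*-u^*|=|F-1|$, which is supported in $\bigcup_j B_{r_0}(a_j)$. Hence
\begin{equation*}
  \norm{\psi_\varepsilon^*-u^*}_{L^2}^2 = \sum_{j=1}^N \int_{B_{r_0}(a_j)} \abs{1-f_\varepsilon(|x-a_j|)}^2\,\d x = 2\pi N \int_0^{r_0} \abs{1-f_\varepsilon(r)}^2\, r\,\d r.
\end{equation*}
Splitting the radial integral at $r=\varepsilon$ and inserting the decay bound gives $\int_0^\varepsilon r\,\d r\lesssim\varepsilon^2$ on the inner part and $\int_\varepsilon^{r_0}(\varepsilon/r)^4 r\,\d r\lesssim\varepsilon^2$ on the outer part, whence $\norm{\psi_\varepsilon^*-u^*}_{L^2}\lesssim\varepsilon$. (Alternatively, the same $O(\varepsilon^2)$ bound follows from $(1-f_\varepsilon)^2\le(1-f_\varepsilon^2)^2$ together with the fact that the potential part $\varepsilon^{-2}\int_{B_{r_0}}(1-f_\varepsilon^2)^2$ of $I(r_0,\varepsilon)$ stays bounded as $\varepsilon\to0$.)

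For the supercurrent, the same polar decomposition yields $j(\psi_\varepsilon^*)=F^2\,j(u^*)$, because $F$ is real and $|u^*|=1$, so that $\mathrm{Im}(F\nabla F\,|u^*|^2)=0$. Therefore $j(\psi_\varepsilon^*)-j(u^*)=(F^2-1)\,j(u^*)$ is again supported in the disjoint balls. On $B_{r_0}(a_j)$ we use $j(u^*)=-\nabla\times G$ from \eqref{eq:j_G}, so that $|j(u^*)(x)|\le |x-a_j|^{-1}+C$, the singular term coming from $\nabla\ln|x-a_j|$ and $C$ controlling the remaining smooth contributions (whose contribution is of lower order). The dominant term is
\begin{equation*}
  \int_{B_{r_0}(a_j)} \abs{1-f_\varepsilon(|x-a_j|)^2}^p\, \abs{x-a_j}^{-p}\,\d x = 2\pi\int_0^{r_0} \abs{1-f_\varepsilon(r)^2}^p\, r^{1-p}\,\d r.
\end{equation*}
The substitution $r=\varepsilon s$ factors out $\varepsilon^{2-p}$ and leaves $\int_0^{r_0/\varepsilon}\abs{1-f(s)^2}^p s^{1-p}\,\d s$, which converges to a finite, $p$-dependent constant: at the origin because $1-p>-1$ for $p<2$, and at infinity because $\abs{1-f(s)^2}^p s^{1-p}\sim s^{1-3p}$ is integrable once $p\ge1$. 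Summing over $j$ and taking the $p$-th root gives $\norm{j(\psi_\varepsilon^*)-j(u^*)}_{L^p}\lesssim_p \varepsilon^{(2-p)/p}=\varepsilon^{\frac2p-1}$.

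The only genuine ingredient is the profile decay $1-f_\varepsilon(r)\lesssim(\varepsilon/r)^2$; once it is granted, the argument is a pair of elementary scaling computations. I expect the main point to watch to be the integrability at the origin of the singular weight $r^{1-p}$ produced by $j(u^*)$, which is exactly what confines the estimate \eqref{eq:jest} to the range $p<2$, while the hypothesis $p\ge1$ guarantees convergence of the rescaled integral at infinity.
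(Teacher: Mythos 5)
Your proposal is correct and follows essentially the same route as the paper's proof: both exploit that $\psi_\varepsilon^* = F u^*$ with $F = \prod_j f_\varepsilon(|\cdot-a_j|)$ equal to $1$ outside the disjoint balls $B_{r_0}(a_j)$, reduce to radial integrals, use the profile decay $1-f_\varepsilon(r)\lesssim\min\{1,(\varepsilon/r)^2\}$ (the paper cites Shafrir's lower bound on $f_{\varepsilon,\infty}$, you cite Herv\'e--Herv\'e), and use $j(\psi_\varepsilon^*)=F^2 j(u^*)$ together with $|j(u^*)|\lesssim|x-a_j|^{-1}$ near $a_j$. Your rescaling $r=\varepsilon s$ is just a cosmetic variant of the paper's splitting of the radial integral at $r\sim\varepsilon$, so the two arguments are the same in substance.
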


\begin{proof}
  The convergence is a standard result of Ginzburg--Landau theory and a direct
  consequence of the energy bound \eqref{eq:WP_nrj}, see \eg
  \cite{linIncompressibleFluidLimit1999}. Then, the bound on the
  $L^2$ norm of the difference is obtained thanks to the specific structure of
  $\psi_{\varepsilon}^*$. As $\psi_{\varepsilon}^*$ coincides
  with $u^*$ outside of the balls $B_{r_0(a_j)}$, we have
  \[
    \int_\Omega|\psi_{\varepsilon}^* - u^*|^2 = \sum_{j=1}^N
    \int_{B_{r_0}(a_j)} |1 - f_{\varepsilon}(\cdot-a_j)|^2 =
    N\int_{B_{r_0(0)}} |1-f_{\varepsilon}|^2.
  \]
  Recalling that $1\geq f_{\varepsilon}\geq f_{\varepsilon,\infty}$ together
  with the lower bound
  $f_{\varepsilon,\infty}(r)\geq\max\prt{0,1-c\prt{\frac\varepsilon r}^2}$ (see
  \cite{shafrirRemarksSolutionsDelta1994}), we get
  \begin{align*}
    \int_\Omega|\psi_{\varepsilon}^* - u^*|^2 \lesssim
    \int_0^{r_0} \min\prt{1,c^2\prt{\frac\varepsilon r}^4} r \mathrm{d}r = \int_0^{\epsilon \sqrt{c}} r \mathrm{d}r + c^2 \varepsilon^4 \int_{\varepsilon \sqrt{c}}^{r_0} \frac{1}{r^3} \mathrm{d} r \lesssim \epsilon^2.
  \end{align*}
  For the supercurrents, we note that, since $|u^*|^2 =1$ and $f_\epsilon$ is
  real-valued, we find
  \begin{align*}
    j(\psi_\epsilon^*) = j(u^*) \prod_{j=1}^N f_\epsilon(|x-a_j|)^2
  \end{align*}
  Thus using that $|j(u^*)| \lesssim \frac{1}{|x-a_j|}$ for $x$ close to $a_j$ (which follows from \eqref{eq:j_G}) and the bound $1 \geq f_{\varepsilon,\infty}(r) \geq\max\prt{0,1-c\prt{\frac\varepsilon r}^2}$, we find
  \begin{align*}
    \norm{j(\psi_\varepsilon^*) - j(u^*)}_{L^p(\Omega)}^p &= \sum_{j=1}^N \int_{B_{r_0}(a_j)} |j(u^*)|^p \bigr(1-f_\varepsilon^2(|x-a_j|)\bigr)^p \mathrm{d} x \\
    &\lesssim N \int_0^{\varepsilon \sqrt{c}} \frac{1}{r^p} r \mathrm{d} r + c^{2p} \varepsilon^{2p} \int_{\varepsilon \sqrt{c}}^{r_0} \frac{1}{r^{3p}} r \mathrm{d}r \lesssim  \epsilon^{2-p}.
  \end{align*}
\end{proof}

\section{Numerical simulation of the Hamiltonian dynamics}\label{sec:HD_sim}

In the previous section, we recalled the main analytical results on the vortex
motion in the singular limit $\varepsilon\to0$. We also mentioned sufficient
conditions for the initial conditions $\psi_\varepsilon^0$ to be well-prepared,
as well as a numerical strategy to build such functions.
Now, we focus on the vortex motion itself and we present an efficient numerical
method for the numerical simulation of the Hamiltonian dynamics
\begin{equation}\label{eq:Effective-equation2}
  \begin{cases}
    \dot{a}_j(t) = -\frac1\pi d_j\J\nabla_{a_j} W({\bm a}(t), d),\\
    {a}_j(0) = {a}^0_j,
  \end{cases}
  \quad \text{with} \quad \J = \begin{bmatrix} 0 & 1 \\ -1 & 0 \end{bmatrix},
\end{equation}
for given initial positions $\bm{a}^0 \in\Omega^{* N}$ and
degrees $d\in\set{\pm1}^N$. Recall that
\begin{equation}\label{eq:W}
  \Forall (\bm{a},d)\in\Omega^{* N}\times\{\pm1\}^N,\quad
  W(\bm{a}, d) = - \pi\sum_{1\leq i \neq j\leq N} d_id_j \ln|a_i - a_j|
  - \pi\sum_{j=1}^N d_jR(a_j; \bm{a}, d),
\end{equation}
with $R$ the solution to \eqref{eq:R}:
\begin{equation*}
  \begin{cases}
    \ds\Delta R = 0 \quad\text{in }\Omega,\\
    \ds R = - \sum_{j=1}^N d_j \ln|x - a_j| \quad\text{on }\partial\Omega.
  \end{cases}
\end{equation*}
The numerical resolution of the ODE \eqref{eq:Effective-equation2} requires the evaluation
of the gradient $\nabla_{a_j} W({\bm a}(t), d)$, which can be done using the
following identity (see \eg \cite{baoNumericalStudyQuantized2013} or
\cite[Theorem VIII.3]{bethuelGinzburgLandauVortices1994} in the case of Dirichlet
boundary conditions),
\[
  \nabla_{a_j} W({\bm a}, d) = - 2 \pi d_j \nabla_x \prt{
    R(x; \bm{a}, d) + \sum_{k\neq j}^N d_k \ln|x-a_k|
  }\Biggr|_{x=a_j}.
\]
The solution $R$ to \eqref{eq:R} being harmonic and the boundary condition being
smooth as long as the vortices stay away from the boundary, it makes sense to use harmonic
polynomials to solve it with spectral accuracy for a negligible cost. We now detail
how to solve \eqref{eq:R} at each time step with such a method. For the sake of
simplicity, we choose $\Omega$ as the unit disk in $\R^2$ but this strategy is
easily transposable to other domains (see Remark~\ref{rmk:other_domains}). The
choice of $\Omega$ as the unit disk also makes sense since trapping potential
with radial symmetry are common in experimental setups, see
\cite{simulaEmergenceOrderTurbulence2014} and references therein.

We define the family $(h_n)_{n \geq 1}$ of harmonic polynomials as $h_1
\equiv 1$ and
\begin{equation}\label{eq:harm}
  \Forall m \in \N,\ m\geq 1,\ \Forall (x,y)\in\R^2, \quad \begin{cases}
    h_{2m}(x,y) = \Re{(x +\i y)^m},\\
    h_{2m+1}(x,y) = \Im{(x +\i y)^m}.
  \end{cases}
\end{equation}
For any integer $n\geq 1$, we have $\Delta h_n = 0$. Moreover, $h_1$ is
of degree $0$ and $h_{2m}, h_{2m+1}$ are the only two linearly independent
harmonic polynomials of degree $m\geq 1$ in two dimensions. In the case where
$\Omega$ is the unit disk, the restriction of these polynomials to the boundary
$\partial\Omega$ is nothing else than the basis of the Fourier modes for
$2\pi$-periodic functions, \ie
\[
  \Xc_n \coloneqq
  \Span\set{h_k|_{\partial\Omega},\ 1 \leq k \leq 2n+1} =
  \Span\set{\partial\Omega\ni z\mapsto z^k,\
    \partial\Omega\ni z\mapsto\overline{z}^\ell,\ 0
    \leq k \leq n,\ 1\leq \ell\leq n}.
\]
Solving numerically \eqref{eq:R} can then be done following these three steps:
\begin{enumerate}
  \item Choose a maximum degree $n$, and fix it once and for all. Denote by
    $\P_n$ the $L^2(\partial\Omega)$-orthogonal
    projection operator from $L^2(\partial\Omega)$ to $\Xc_n$:
    for any $g\in L^2(\partial\Omega)$,
    \[
      (\P_n g)(e^{\\i \theta}) = \sum_{k=-n}^n \widehat{g}(k) e^{\i k\theta},
    \]
    where
    \[
      \widehat{g}(k) = \frac1{2\pi} \int_0^{2\pi} e^{-\i
        k\theta}g(e^{\\i \theta})\d\theta.
    \]
  \item Compute the Fourier coefficients $(\widehat{g}_{a}(k))_{-n\leq k \leq n}$
    of the Dirichlet boundary condition in \eqref{eq:R}
    \[[0,2\pi) \ni \theta \mapsto g_a(e^{\\i \theta}) \coloneqq -\sum_{j=1}^N
      d_j\ln|(\cos \theta,\sin \theta)-a_j|\] up to order $n$, for instance
    using a Fast Fourier Transform (FFT).
  \item Compute the (approximate) solution $R_n$ to \eqref{eq:R} as the harmonic expansion of
    $\P_ng_a$:
    \begin{equation*}
      \Forall r \in [0,1),\ \Forall \theta\in[0,2\pi), \quad
      R_n(re^{\\i \theta}) = \sum_{k=-n}^n r^{|k|}\widehat{g}_{a}(k)e^{\i k\theta},
    \end{equation*}
    which is still harmonic by linear combination of harmonic functions.
\end{enumerate}
This strategy differs from the one in
\cite{baoNumericalStudyQuantized2013,baoNumericalStudyQuantized2014}, where the
PDE \eqref{eq:R} is typically solved with finite differences in the case
$\Omega=(-1,1)^2$, or with the Fourier pseudo-spectral method in the
$\theta$-direction and with the FEM in the $r$-direction when $\Omega$ is the
unit disk. Here, we suggest to use the smoothness of the boundary data $g_a$ to
use a Fourier approximation, which requires only one FFT to obtain an
approximate solution to \eqref{eq:R}. Moreover, the smoothness of the boundary
data $g_a$ enables to reach spectral accuracy, see Section~\ref{sec:error}
for more details.

To conclude this section, we provide in Figure~\ref{fig:trajectories}
first numerical results describing vortex
trajectories in the singular limit $\varepsilon\to0$. The computational domain
is the unit disk $\Omega=B_1(0)$, with different initial conditions $\bm{a}^0$
and $d$ for each
case. The ODE \eqref{eq:Effective-equation2} is solved with a 4th-order Runge--Kutta method
(RK4, see \eg \cite[Chapter II]{hairerSolvingOrdinaryDifferential1993})
with time step $\delta t=10^{-3}$ up to some time $T$. The PDE \eqref{eq:R} is
solved at each time step using harmonic polynomials up to degree $n=64$, as
described above. We consider here eight cases, some of which are directly taken from
\cite{baoNumericalStudyQuantized2014} in order to validate our method. Note that
all the trajectories were obtained in a few seconds on a personal laptop, a
significant improvement when compared to the computational cost of the complete
PDE \eqref{eq:GPE} for small $\varepsilon$.

\begin{table}[h!]
  \centering
  \begin{tabular}{cccc}
    \toprule
    \textbf{Case}       & $N$ & $\bm a^0$                                & $d$                                          \\
    \hline
    \textbf{Case $1$} & $2$          & $\prt{(-0.5,0.0), (0.5,0.0)}$                          & $(1,1)$                                                \\
    \textbf{Case $2$} & $2$          & $\prt{(-0.7,0.0), (0.7,0.0)}$                          & $(-1,1)$                                               \\
    \textbf{Case $3$} & $4$          & $\prt{(-0.6,-0.6), (-0.6,0.6), (0.6,0.6), (0.6,-0.6)}$ & $(1,-1, 1, -1)$                                        \\
    \textbf{Case $4$} & $2$          & $\prt{(-0.25,-0.25), (0.25,0.25)}$                     & $(-1,1)$                                               \\
    \textbf{Case $5$} & $3$          & $\prt{(-0.5,0.0), (0.5,0.0), (0.0,0.0)}$               & $(1,1,1)$                                              \\
    \textbf{Case $6$} & $3$          & $\prt{(-0.5,0.0), (0.5,0.0), (0.0,0.0)}$               & $(1,1,-1)$                                             \\
    \textbf{Case $7$} & $9$          & $\prt{(x,y)}$ for $x,y\in\set{-0.3,0,0.3}$           & $d_i=1$ for $i=1,\dots,9$                           \\
    \textbf{Case $8$} & $9$          & $\prt{(x,y)}$ for $x,y\in\set{-0.3,0,0.3}$           & $d_i=1$ for $i=1,\dots,9$, $i\neq5$, $d_5=-1$  \\
    \bottomrule
  \end{tabular}
  \caption{Different settings for the initial conditions of the reduced
    Hamiltonian dynamics.}
\end{table}

\begin{figure}[p!]
  \includegraphics[width=0.47\linewidth]{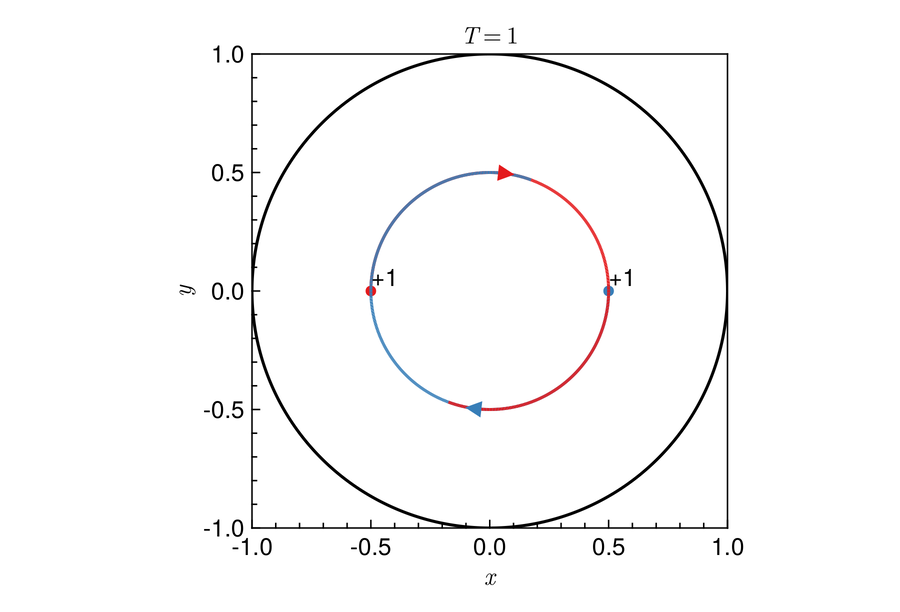}\hfill
  \includegraphics[width=0.47\linewidth]{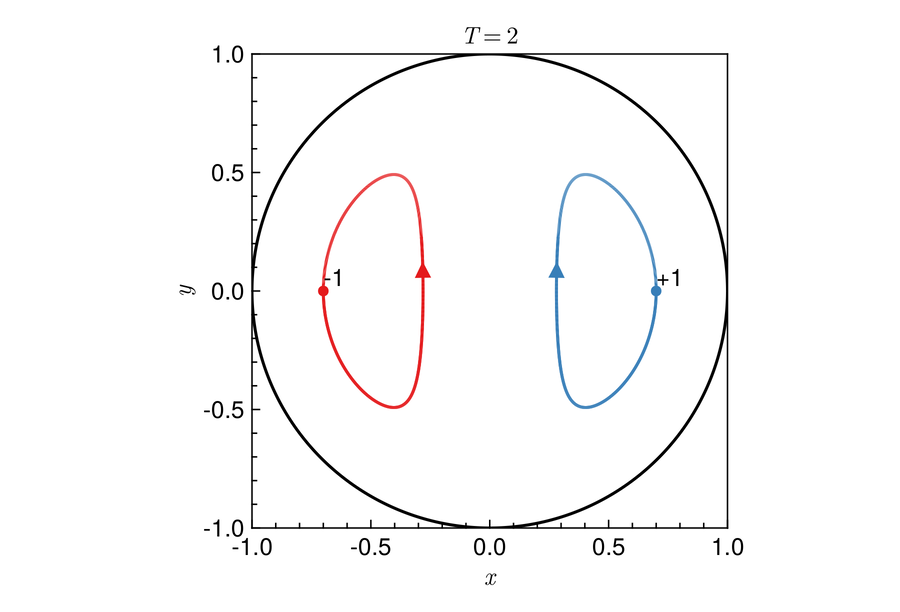}\\
  \includegraphics[width=0.47\linewidth]{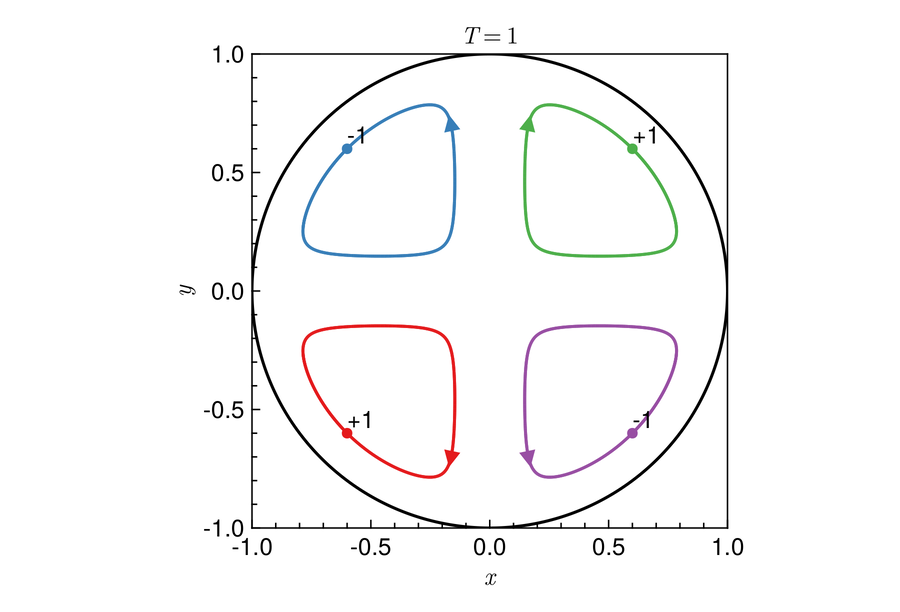}\hfill
  \includegraphics[width=0.47\linewidth]{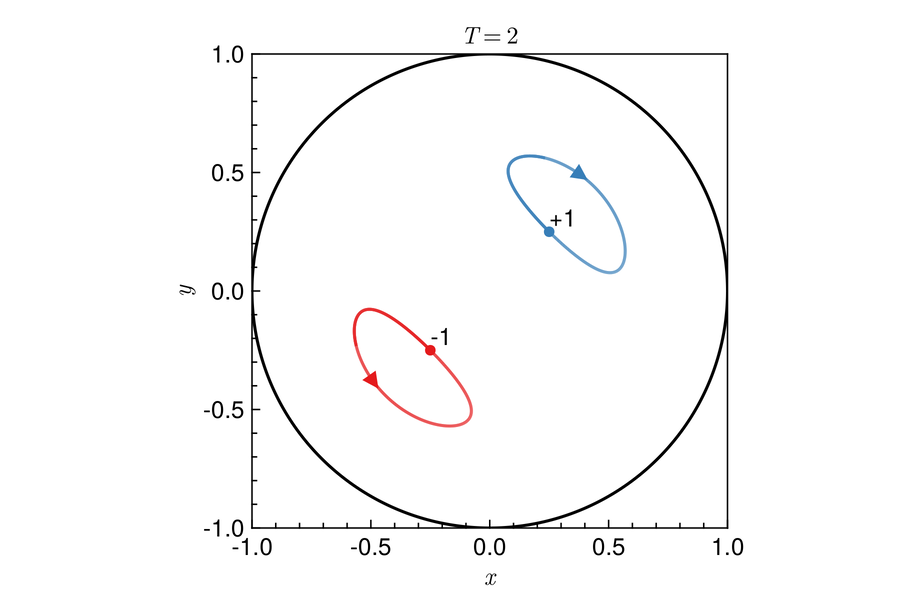}\\
  \includegraphics[width=0.47\linewidth]{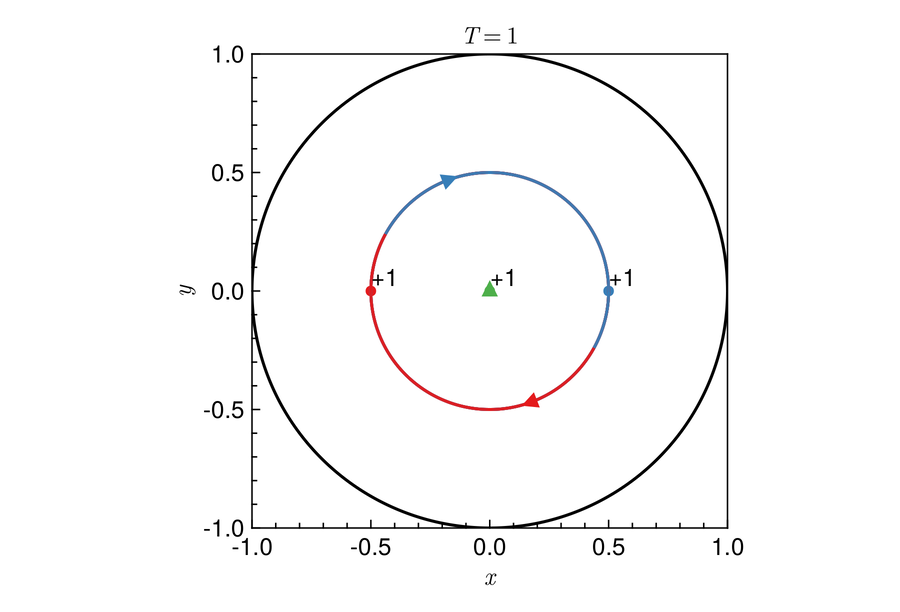}\hfill
  \includegraphics[width=0.47\linewidth]{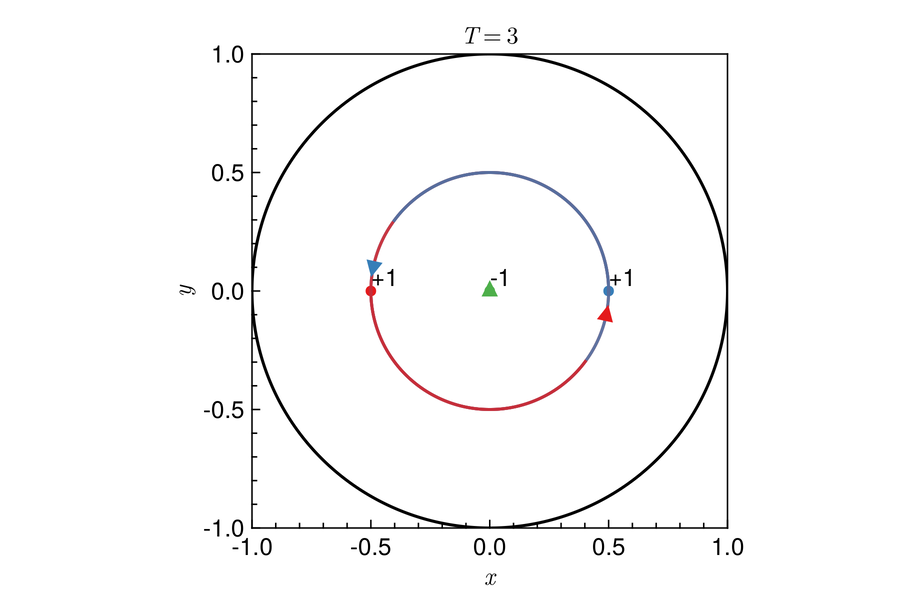}\\
  \includegraphics[width=0.47\linewidth]{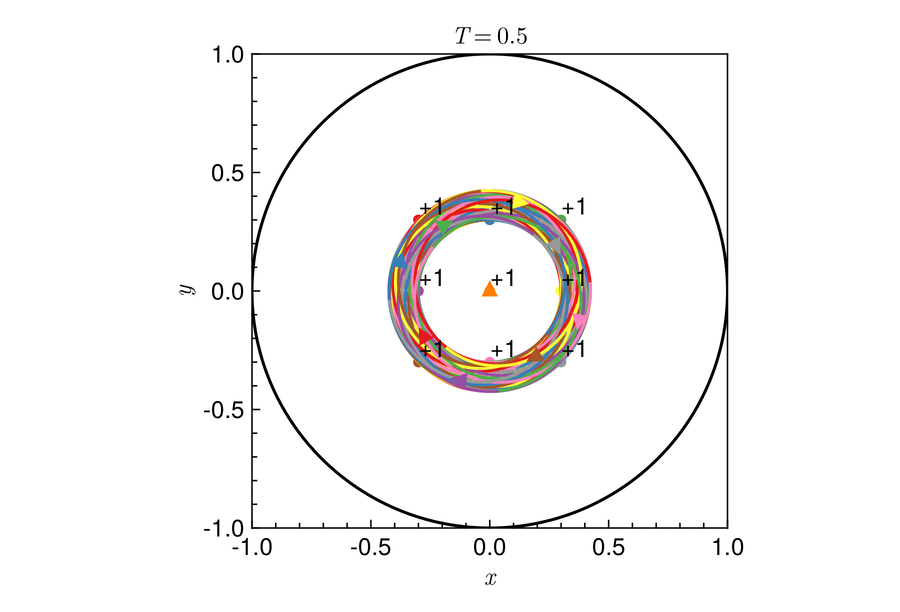}\hfill
  \includegraphics[width=0.47\linewidth]{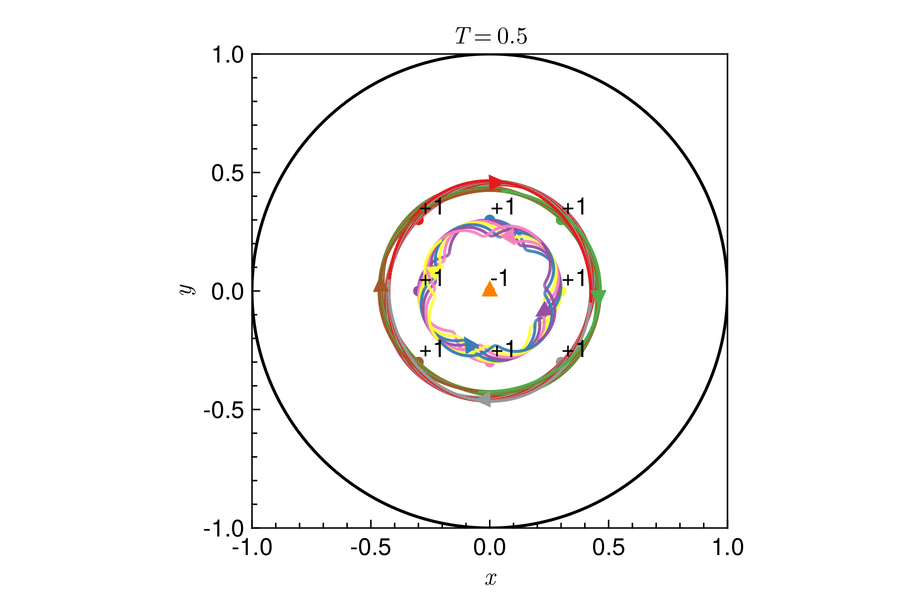}\\
  \caption{Trajectories of vortices in the singular limit $\varepsilon\to0$,
    cases 1 to 8, from left to right and up to bottom.
    Initial conditions are represented by dots and the different degrees
    used are specified for each trajectory.}
  \label{fig:trajectories}
\end{figure}

\begin{remark}[Generic smooth domains]
  The same strategy can be used for more general smooth domains. From a
  numerical point of view, one needs to consider the harmonic polynomials in 2D
  defined in \eqref{eq:harm}, orthonormalize them in $\L^2(\partial\Omega)$ and
  then project the boundary data in this basis before obtaining the (approximated)
  solution to \eqref{eq:R} by harmonic extension. For instance, we obtained
  similar results with $\Omega=[0,1]^2$ and we suspect the spectral convergence
  that we could prove (see Theorem~\ref{thm:errorestimate}) for the case where
  $\Omega$ is the unit disk to also hold for more general domains.
  \label{rmk:other_domains}
\end{remark}

\begin{remark}[ODE solvers]
  We chose here a RK4 solver for the sake of simplicity, but other solvers can
  be used, such as symplectic solvers for non-separable Hamiltonian systems. As
  long as the associated error analysis is known, the time discretization error
  in Theorem~\ref{thm:errorestimate} can be adapted accordingly.
\end{remark}

\section{Numerical simulation of the Gross--Pitaevskii equation: a new method
  based on vortex tracking}\label{sec:method}

In this section, we present the main contribution of this paper:
a new numerical method for the simulation of \eqref{eq:GPE} in the
regime where $\varepsilon>0$ is small but finite. First, we start by describing
the method and then we provide some simulations when the initial conditions are
given by known vortex positions.

\subsection{Description of the method and mathematical justification}\label{sec:method_1}

Instead of solving directly \eqref{eq:GPE}, we propose to take advantage of the
well-known behavior in the singular limit $\varepsilon\to0$ as well as the
properties of the smoothing procedure used to obtain well-prepared initial
conditions in Section~\ref{sec:WP}. This yields the following algorithm:
\begin{enumerate}
  \item Define initial vortex positions $\bm{a}^0\in\Omega^{* N}$ and degrees
    $d\in\set{\pm1}^N$. Set up the initial phase $H$ such that the initial
    condition $\psi_{\varepsilon}^0$  from Lemma~\ref{lem:WP} satisfies
    homogeneous Neumann boundary conditions.
    Compute and store the radial function $f_{\varepsilon}$.
  \item Evolve $\bm{a}(t)$ according to the ODE \eqref{eq:Effective-equation2} up to some maximum
    time $T$.
  \item At time $t>0$, build back an approximation of the wave function from the
    vortex positions ${\bm a}(t)$ as
    \begin{equation}\label{eq:smoothed_approx}
      \psi_{\varepsilon}^*(t) = \psi_{\varepsilon}^*({\bm a}(t), d) =
      u^*(\cdot; {\bm a}(t), d) \prod_{j=1}^N f_{\varepsilon}(|\cdot -
      a_j(t)|),
    \end{equation}
    where $u^*(x; {\bm a}(t), d)$ is the canonical harmonic map defined by
    \begin{equation*}
      u^*(x; {\bm a}(t), d) = \exp\prt{\i H(x)} \prod_{j=1}^N
      \prt{\frac{x - a_j}{|x - a_j|}}^{d_j},
    \end{equation*}
    with $H$ the unique zero-mean harmonic function solving
    \begin{equation}\label{eq:H_rebuilt}
      \begin{cases}
        \Delta H = 0 \quad\text{in } \Omega,\\ \displaystyle
        \partial_\nu H(x) = - \sum_{j=1}^N d_j\partial_\nu\theta(x-a_j)
        = \sum_{j=1}^N d_j \partial_\tau \ln|x-a_j|
        \quad\text{on } \partial\Omega.
      \end{cases}
    \end{equation}
    Computing $H$ with the appropriate boundary conditions is required so that
    the approximate solution $\psi_{\varepsilon}^*(t)$ satisfies homogeneous
    Neumann boundary conditions. This Laplace's equation
    can be solved using the same harmonic polynomial basis than the
    one used to solve \eqref{eq:R}. However, note that the harmonic function $H$
    is defined only up to a constant: this implies that the reconstructed wave
    function $\psi_\varepsilon^*$ is an approximation of $\psi_\varepsilon$ only
    up to a constant phase. However, when looking at quantity of interest such
    as the supercurrents or the vorticity, this phase factor has no influence.
\end{enumerate}

The method we propose can be summarized by the diagram in Figure~\ref{fig:diagram}.
\begin{figure}[h!]
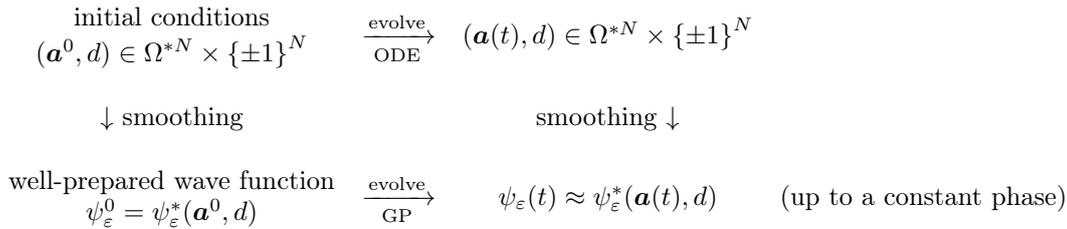

  \centering
  \begin{tabular}{cccc}
    \begin{tabular}[c]{@{}c@{}}initial conditions\\
      $(\bm{a}^0,d)\in\Omega^{* N}\times\set{\pm1}^N$\end{tabular}
    & $\xrightarrow[\text{ODE}]{\text{evolve}}$ & $({\bm
      a}(t),d)\in\Omega^{* N}\times\set{\pm1}^N$                      & \\
    \multicolumn{1}{l}{}
    & \multicolumn{1}{l}{}                      & \multicolumn{1}{l}{}
    & \\
    $\downarrow$ smoothing                                                                                                          &                                           &  smoothing $\downarrow$\\
    \multicolumn{1}{l}{}
    & \multicolumn{1}{l}{}                      & \multicolumn{1}{l}{}
    & \\
    \begin{tabular}[c]{@{}c@{}}well-prepared wave function\\ $\psi_\varepsilon^0
      = \psi_{\varepsilon}^*(\bm{a}^0,d)$\end{tabular} &
    $\xrightarrow[\text{GP}]{\text{evolve}}$ &
    $\psi_\varepsilon(t)\approx\psi_{\varepsilon}^*({\bm a}(t),d)$ &
    \text{(up to a constant phase)}
  \end{tabular}
  \caption{Diagram summarizing the numerical simulation of the GP
    equation \eqref{eq:GPE} via vortex tracking.}
  \label{fig:diagram}
\end{figure}

One can then observe that, the smaller $\varepsilon$, the more commutative the
diagram from Figure~\ref{fig:diagram} is.
Mathematically, the justification of our method is a simple combination of the
results we recalled in Section~\ref{sec:lite}, that we present here in three
points.
\begin{proposition}\label{prop:method}
  Let $\bm{a}^0\in\Omega^{* N}$ and $d\in\set{\pm1}^N$ be given as initial data. Let
  ${\bm a}(t)$ evolves according to \eqref{eq:Effective-equation2}.
  Let $\psi_\varepsilon$ be the solution to the GP equation
  \eqref{eq:GPE} with
  initial conditions $\psi_\varepsilon^0 = \psi_{\varepsilon}(\bm{a}^0)$ for
  $\varepsilon$ and $r_0$ small enough. Then, for all $0\leq t\leq\tau_{\varepsilon,\bm{a}^0}$,
  \begin{enumerate}
    \item Both $\psi_\varepsilon(t)$ and $\psi_{\varepsilon}^*(t)$ are close to
      be energetically optimal:
      \[
        E_\varepsilon(\psi_\varepsilon(t)), E_\varepsilon(\psi_{\varepsilon}^*(t))
        \leq W_{\varepsilon}({\bm a}(t), d) + C\varepsilon^{\frac12}.
      \]
    \item Up to a constant phase, the error
      $\norm{\psi_\varepsilon(t) - \psi_{\varepsilon}^*(t)}_{L^2}$ goes
      to $0$ as $\varepsilon \to 0$.
    \item The Jacobians and supercurrents of $\psi_\varepsilon$ and $\psi_{\varepsilon}^*$
      are close, in the sense
      \[
        \norm{J\psi_\varepsilon(t) - J\psi_{\varepsilon}^*(t)}_{\dot\W^{-1,1}}
        \lesssim\varepsilon^\beta
        \quad\text{and}\quad
        \norm{j(\psi_\varepsilon(t)) - j(\psi_{\varepsilon}^*(t))}_{L^{\frac43}}
        \lesssim\varepsilon^\gamma.
      \]
  \end{enumerate}
  Here, $\beta$, $\gamma\leq\frac12$ and $\tau_{\varepsilon,\bm{a}^0}$ are defined in
  Theorem~\ref{thm:JS}.
\end{proposition}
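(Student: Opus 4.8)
The plan is to prove the three assertions by combining Theorem~\ref{thm:JS}, Lemma~\ref{lem:WP} and Proposition~\ref{prop:L2_WP} through triangle inequalities anchored at the intermediate objects $u^*(\bm a(t),d)$ and $\pi\sum_{j=1}^N d_j\delta_{a_j(t)}$. The starting observation is that, by Lemma~\ref{lem:WP}, the initial datum $\psi_\varepsilon^0 = \psi_\varepsilon^*(\bm a^0,d)$ is well-prepared in the sense of Definition~\ref{def:WP}, so that Theorem~\ref{thm:JS} applies and provides the estimates \eqref{eq:jac_estimate} for $\psi_\varepsilon(t)$ on $[0,\tau_{\varepsilon,\bm a^0}]$. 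Simultaneously, Lemma~\ref{lem:WP} and Proposition~\ref{prop:L2_WP} applied at the time-$t$ positions $\bm a(t)$ control $\psi_\varepsilon^*(t)$ against the same intermediate objects. The rest is bookkeeping of exponents, using $(\varepsilon/r_0)^2\lesssim\varepsilon^{1/2}$ and $\varepsilon\lesssim\varepsilon^\beta$ for small $\varepsilon$.

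For assertion (1), the bound on $E_\varepsilon(\psi_\varepsilon^*(t))$ is exactly \eqref{eq:WP_nrj} of Lemma~\ref{lem:WP} at $\bm a(t)$, with the quadratic remainder absorbed into $C\varepsilon^{1/2}$. For $E_\varepsilon(\psi_\varepsilon(t))$ I would avoid invoking the (omitted) energy part of Theorem~\ref{thm:JS} and argue instead by two conservation laws: the GP flow \eqref{eq:GPE} conserves $E_\varepsilon$, while the Hamiltonian flow \eqref{eq:Effective-equation2} conserves $W(\cdot,d)$, since $\tfrac{d}{dt}W(\bm a(t),d) = -\tfrac1\pi\sum_j d_j\,\nabla_{a_j}W\cdot\J\nabla_{a_j}W = 0$ by skew-symmetry of $\J$. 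Hence $W_\varepsilon(\bm a(t),d) = W_\varepsilon(\bm a^0,d)$, and the initial energy bound from Lemma~\ref{lem:WP} propagates verbatim to time $t$.

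For assertion (2), I would split $\norm{\psi_\varepsilon(t)-\psi_\varepsilon^*(t)}_{L^2} \le \norm{\psi_\varepsilon(t)-u^*(\bm a(t),d)}_{L^2} + \norm{u^*(\bm a(t),d)-\psi_\varepsilon^*(t)}_{L^2}$. The second term is $\lesssim\varepsilon$ by Proposition~\ref{prop:L2_WP}. For the first, \eqref{eq:L2_cvg} gives $\psi_\varepsilon(t)\to u^*(\bm a(t),d)$ in $W^{1,p}$ for every $p<2$; in two dimensions the Sobolev embedding $W^{1,p}(\Omega)\hookrightarrow L^{2p/(2-p)}(\Omega)\hookrightarrow L^2(\Omega)$, valid since $2p/(2-p)\ge2$, upgrades this to $L^2$ convergence. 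The only subtlety is the constant phase: since $u^*$ is determined only up to a constant factor in $\S^1$ and the GP evolution may introduce a global phase drift, the limit is reached up to such a factor, which is precisely the phase left free in the statement; choosing it to match the $H$ used to build $\psi_\varepsilon^*$ closes the argument.

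Assertion (3) follows the same pattern. For the Jacobian, I insert $\pi\sum_j d_j\delta_{a_j(t)}$: the first half is $\lesssim\varepsilon^\beta$ by \eqref{eq:jac_estimate} and the second is $\lesssim\varepsilon$ by \eqref{eq:WP_jac} at $\bm a(t)$, so the sum is $\lesssim\varepsilon^\beta$. For the supercurrent I insert $j(u^*(\bm a(t),d))$: the first half is $\lesssim\varepsilon^\gamma$ by \eqref{eq:jac_estimate}, and the second is the $p=\tfrac43$ case of \eqref{eq:jest}, giving $\varepsilon^{2/(4/3)-1}=\varepsilon^{1/2}$, which is absorbed into $\varepsilon^\gamma$ thanks to $\gamma\le\tfrac12$. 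I expect no genuine analytical obstacle here; the only non-cosmetic points are the conservation-law argument for the energy in (1) and the Sobolev upgrade together with the phase normalization in (2), while everything else amounts to tracking exponents.
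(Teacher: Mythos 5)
Your proof is correct and follows essentially the same route as the paper's: conservation of $E_\varepsilon$ under the GP flow and of $W$ under the Hamiltonian flow combined with \eqref{eq:WP_nrj} for (1), the triangle inequality through $u^*(\bm a(t),d)$ with \eqref{eq:L2_cvg} and Proposition~\ref{prop:L2_WP} for (2), and the triangle inequality with \eqref{eq:jac_estimate}, \eqref{eq:WP_jac} and \eqref{eq:jest} for (3). You merely spell out details the paper leaves implicit, namely the skew-symmetry computation showing $W(\bm a(t),d)$ is constant and the Sobolev embedding upgrading the $W^{1,p}$ convergence of \eqref{eq:L2_cvg} to $L^2$.
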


\begin{proof}
  These properties are all consequences of the general fact that, if the initial
  conditions are well-prepared in the sense of Definition~\ref{def:WP}, then
  well-preparedness is conserved along time. In particular,

  (1) is a direct consequence of the conservation of energy for \eqref{eq:GPE}
  and Hamiltonian systems as well as the bound \eqref{eq:WP_nrj} satisfied by the
  smoothed canonical harmonic maps.

  (2) is obtained after recalling that both $\psi_\varepsilon(t)$ and
  $\psi_{\varepsilon}^*(t)$ converge towards $u^*({\bm a}(t),d)$ in
  $L^2(\Omega)$ up to a constant phase
  (see \eqref{eq:L2_cvg} and Proposition~\ref{prop:L2_WP}).

  (3) is the triangular inequality applied to \eqref{eq:jac_estimate} and
  \eqref{eq:WP_jac} for the Jacobian and to \eqref{eq:jac_estimate} and
  \eqref{eq:jest} for the supercurrents, the bound $\gamma \leq \frac12$ coming from
  \eqref{eq:jest}.
\end{proof}

At this point, some comments have to be made.
First, the smaller epsilon, the more accurate is
the approximation $\psi_{\varepsilon}^*$. This is a clear improvement with
respect to standard techniques used to solve numerically nonlinear
Schr\"odinger type equations such as those mentioned in the introduction, where
very small values of $\varepsilon$ requires very fine discretization in both
space and time to ensure numerical stability. Here, $\varepsilon$ only intervenes
through the numerical resolution of the nonlinear ODE \eqref{eq:ODE} which is
only one dimensional, and thus easily approximated with high accuracy. Moreover,
this step only has to be done once at the beginning of the simulation as the
function $f_{\varepsilon}$ used to build the approximation
$\psi_{\varepsilon}$ is time-independent and can be saved for the whole
simulation.
Of course, such an improvement is only possible thanks
to the well-known analytical theory of the singular limit $\varepsilon\to0$
presented in the beginning of this paper.
Second, there is, up to our knowledge, no available quantitative bounds on the
error $\norm{\psi_\varepsilon(t) - u^*({\bm a}(t),d)}_{L^2}$
in the limit $\varepsilon\to0$ as most of the asymptotic
results are based on compactness arguments. The only quantitative bounds
available in the literature are the refined Jacobian estimates derived in
\cite{jerrardRefinedJacobianEstimates2008}, but these are bounds in weak norms
that are not computable numerically. However, in the same paper, the authors
also derived bounds on the $L^p$ norms of the supercurrents for $p<2$, which can be
exploited to evaluate numerically the accuracy of our method, see
Section~\ref{sec:error}.
Finally, note that the asymptotic dynamics of the vortices
is only valid as long as the vortices stay away from each other and from the
boundary, a condition traduced by the requirement of $r_0$ being small enough
for the initial condition to be well-prepared and to reconstruct the
wave function without any overlap between the vortices.
In particular, nonlinear physical phenomena triggered by overlapping
vortices (or vortices too close to the boundary), such as the radiations and
sound waves observed numerically in \cite{baoNumericalStudyQuantized2014},
cannot be produced with our method. However, a possible improvement could be to
use the vortex-tracking method up to some time $t$ where the vortices become
close enough to each other and then switch to a standard PDE solver to simulate
the vortex interaction.

\subsection{Vortex positions as initial conditions}\label{sec:numres_1}

We now present some numerical simulations of approximate solutions to
\eqref{eq:GPE} in the regime of small $\varepsilon$. The reference
vortex trajectories are those from Section~\ref{sec:HD_sim}. The solution of
\eqref{eq:ODE} is approximated numerically using 1D finite elements with small
mesh size $\delta r = 10^{-5}$ and $r_0=0.1$. Finally, the solution to
Laplace's equation \eqref{eq:H_rebuilt} in the smoothing process at time $t>0$ is computed using
harmonic polynomials of degree $n=64$. Here, the initial vortex positions $\bm{a}^0$
are provided as input data and the initial phase $H$ is computed in order to
match the Neumann boundary conditions (see \eqref{eq:H_rebuilt}).
We present the numerical results obtained by the application of our
numerical method to cases 1, 2, 3 and 6 from Section~\ref{sec:HD_sim}, for
$\varepsilon=10^{-2}$. Moreover, note that the phases displayed
in Figures~\ref{fig:smooth_case1}--\ref{fig:smooth_case6} are those of the
canonical harmonic map $u^*(\bm a(t), d)$ when far from the vortices.

\begin{remark}[Unknown initial vortex locations]
  One could imagine to use the same numerical method with \emph{unknown} vortex
  positions and degrees. An additional localization step is therefore required,
  for instance using one of the algorithms presented in
  \cite{dujardinNumericalStudyVortex2022,kaltIdentificationVorticesQuantum2023}.
  If one can estimate the localization error, it can then be transported by looking at the difference
  between $\bm{a}^0$ and $\bm{b}^0$ in Theorem~\ref{thm:errorestimate}.
\end{remark}

\begin{figure}[p!]
  \includegraphics[width=0.48\linewidth]{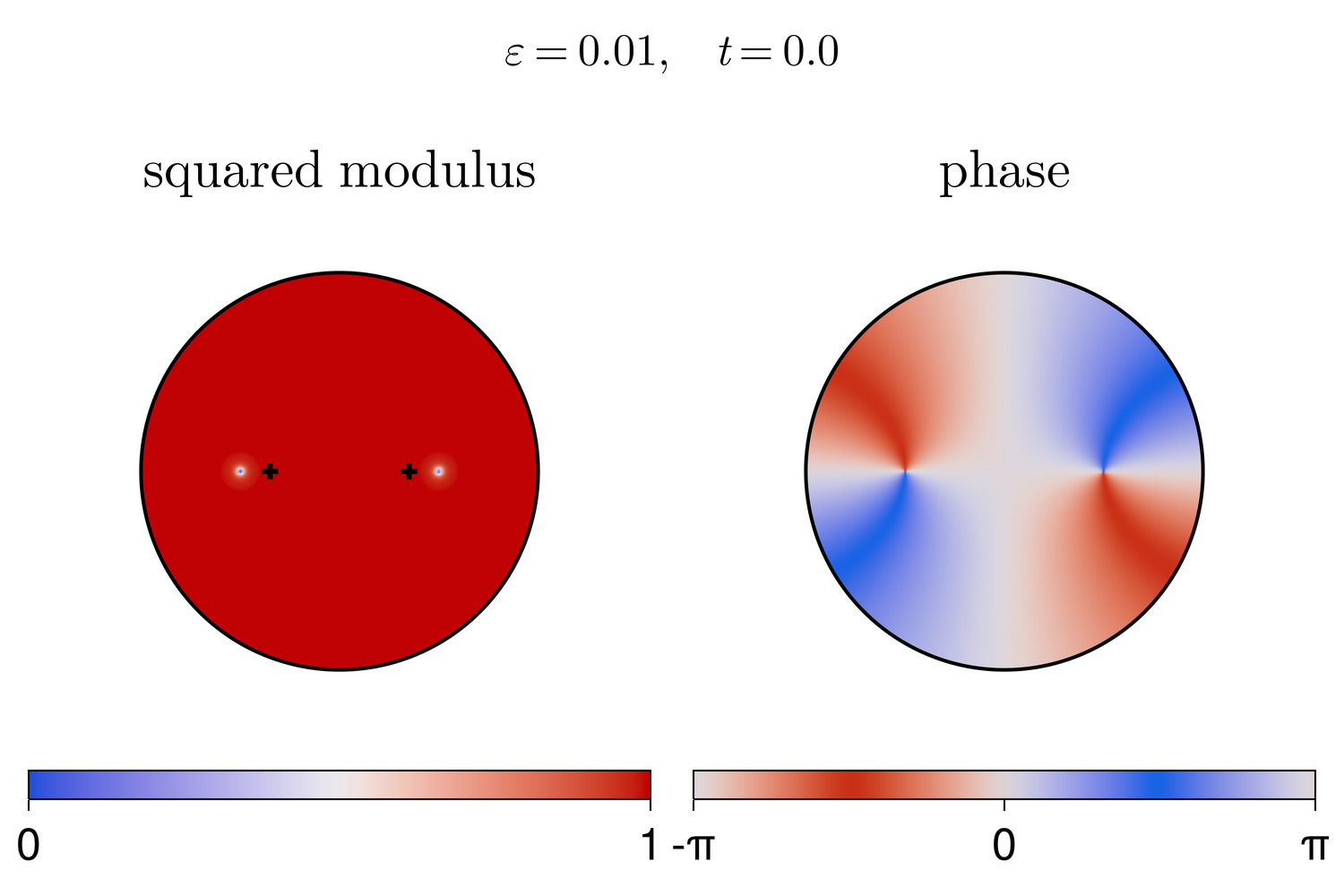}\hfill
  \includegraphics[width=0.48\linewidth]{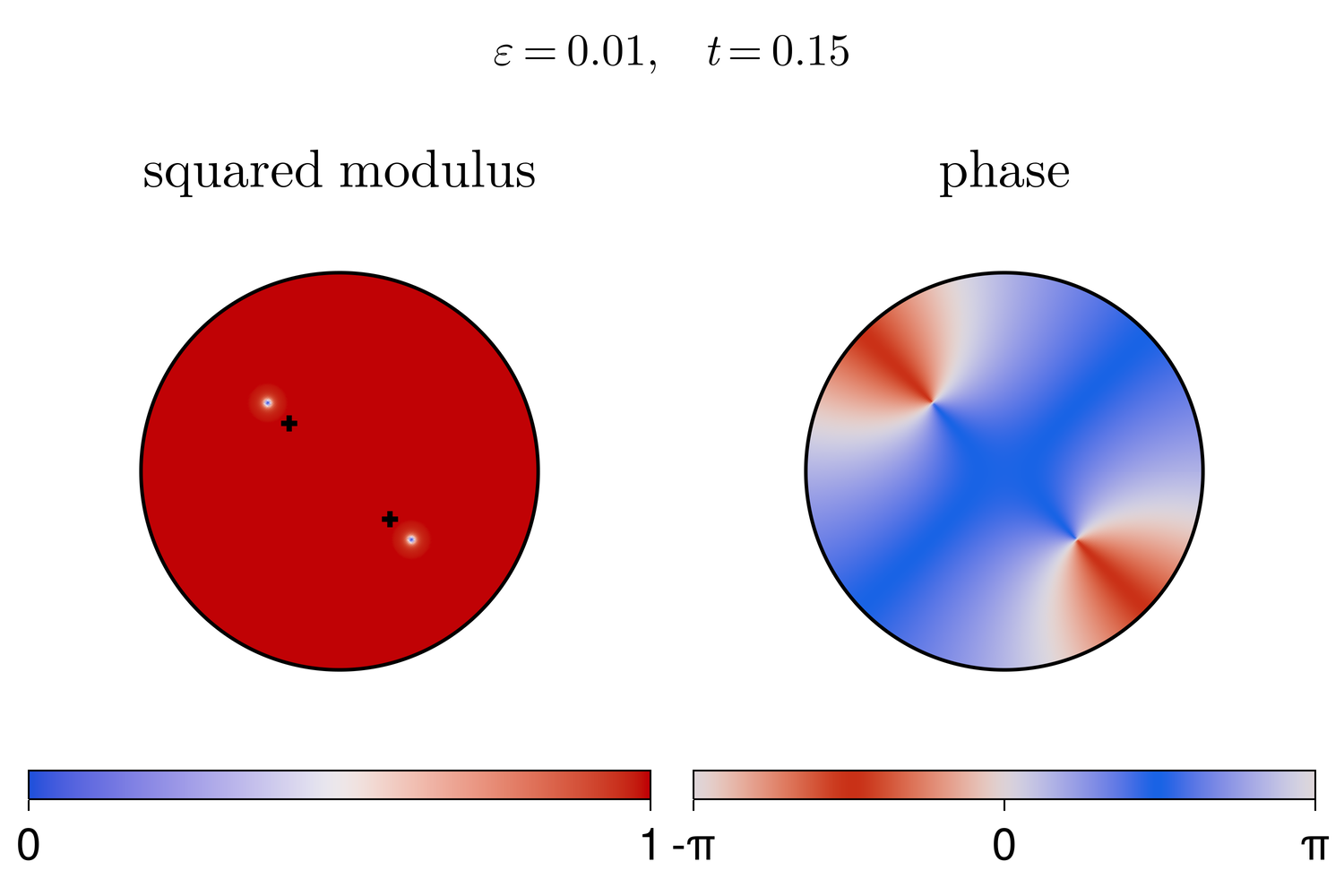}\\
  \includegraphics[width=0.48\linewidth]{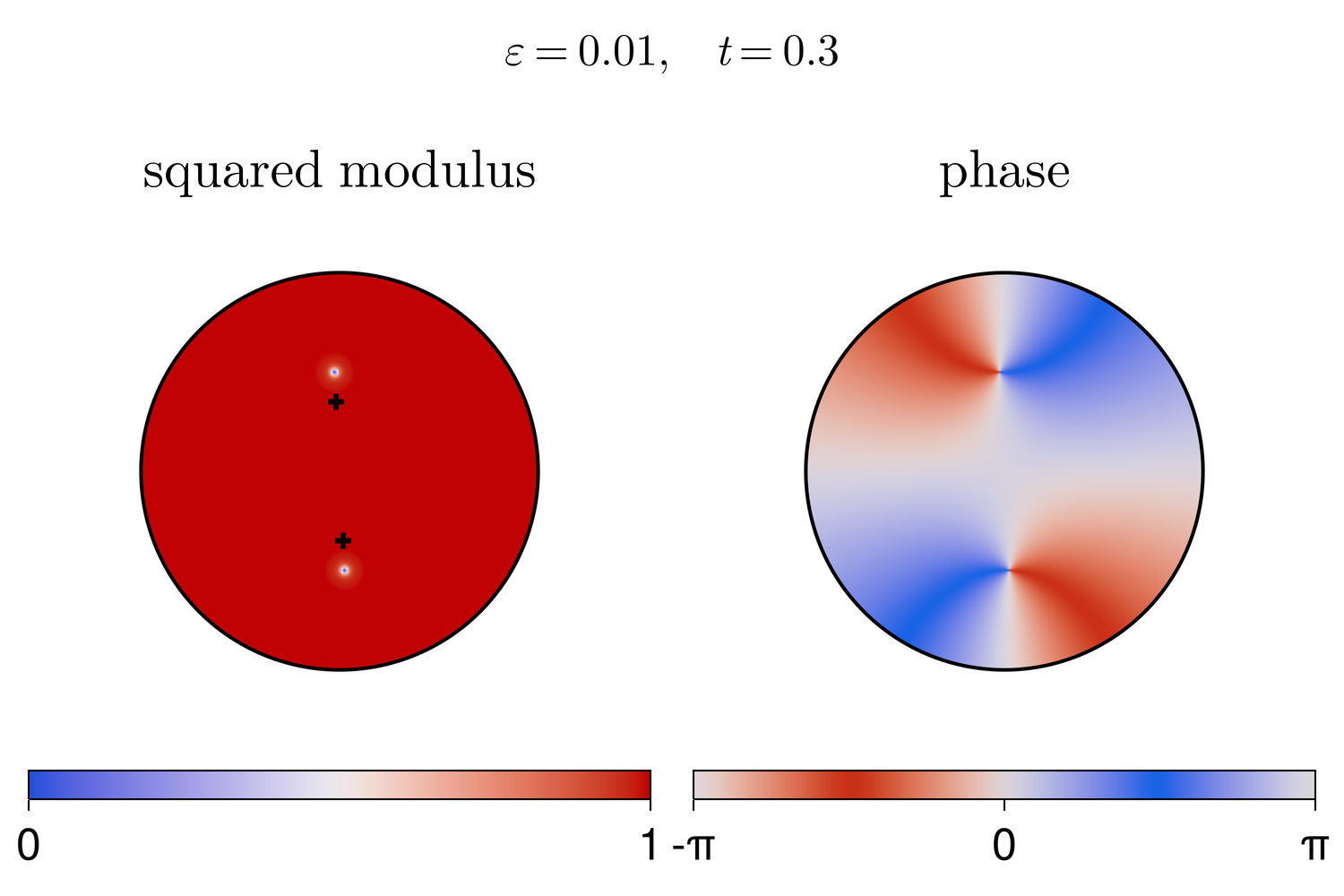}\hfill
  \includegraphics[width=0.48\linewidth]{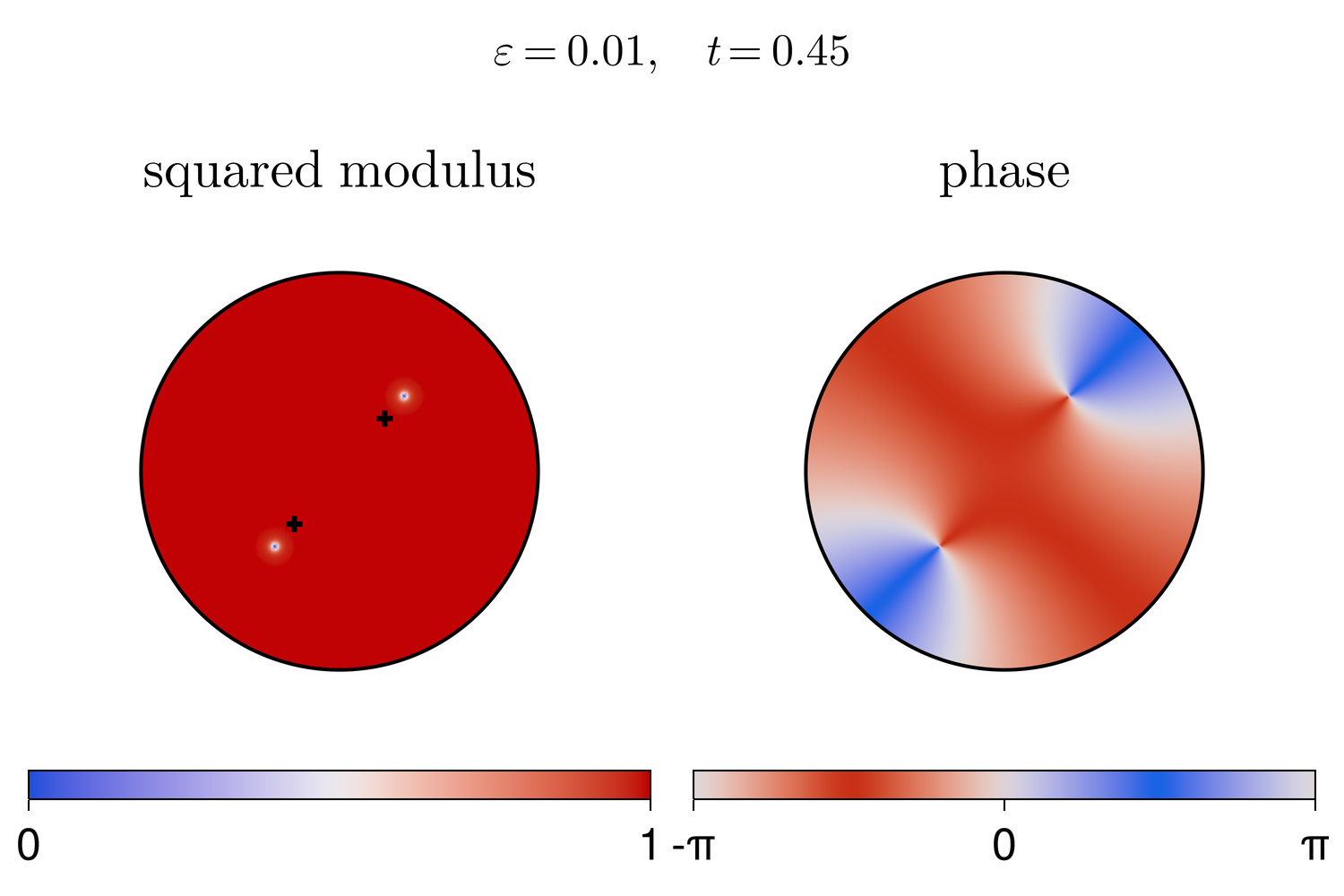}\\
  \caption{\textbf{Case 1}: Squared modulus and phase of
    $\psi_{\varepsilon}^*(t)$ for different times $t$.}
  \label{fig:smooth_case1}
\end{figure}

\begin{figure}[p!]
  \includegraphics[width=0.48\linewidth]{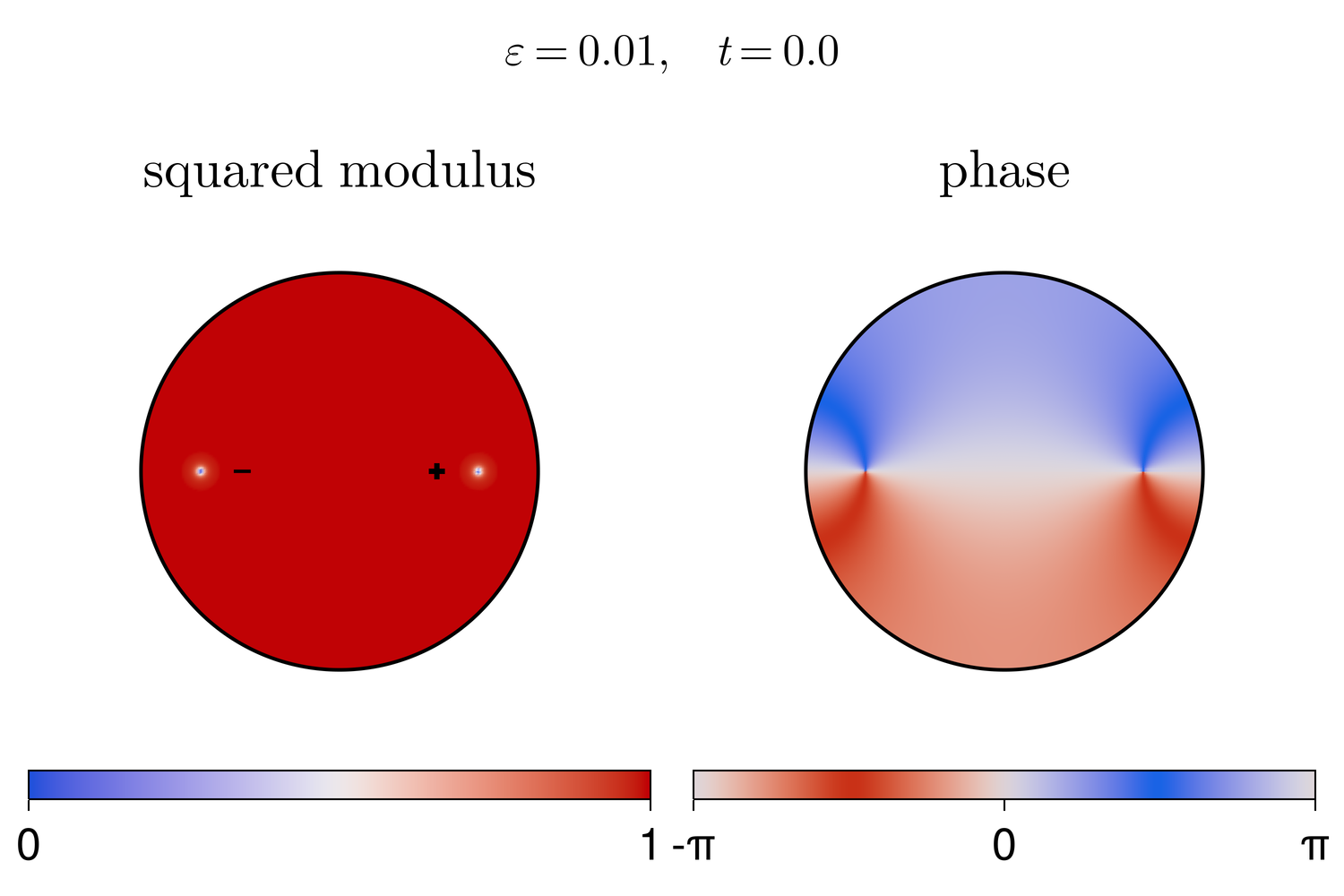}\hfill
  \includegraphics[width=0.48\linewidth]{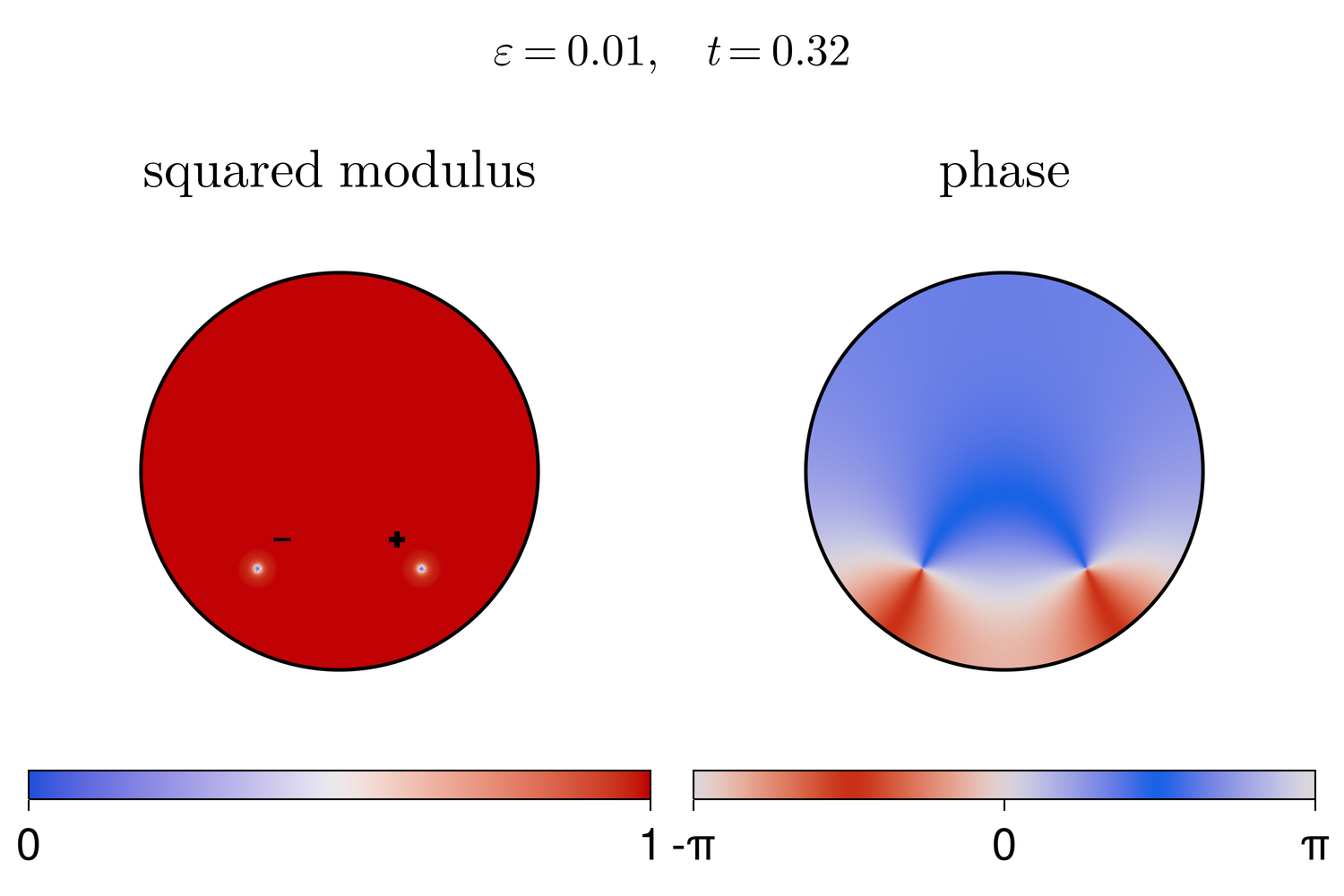}\\
  \includegraphics[width=0.48\linewidth]{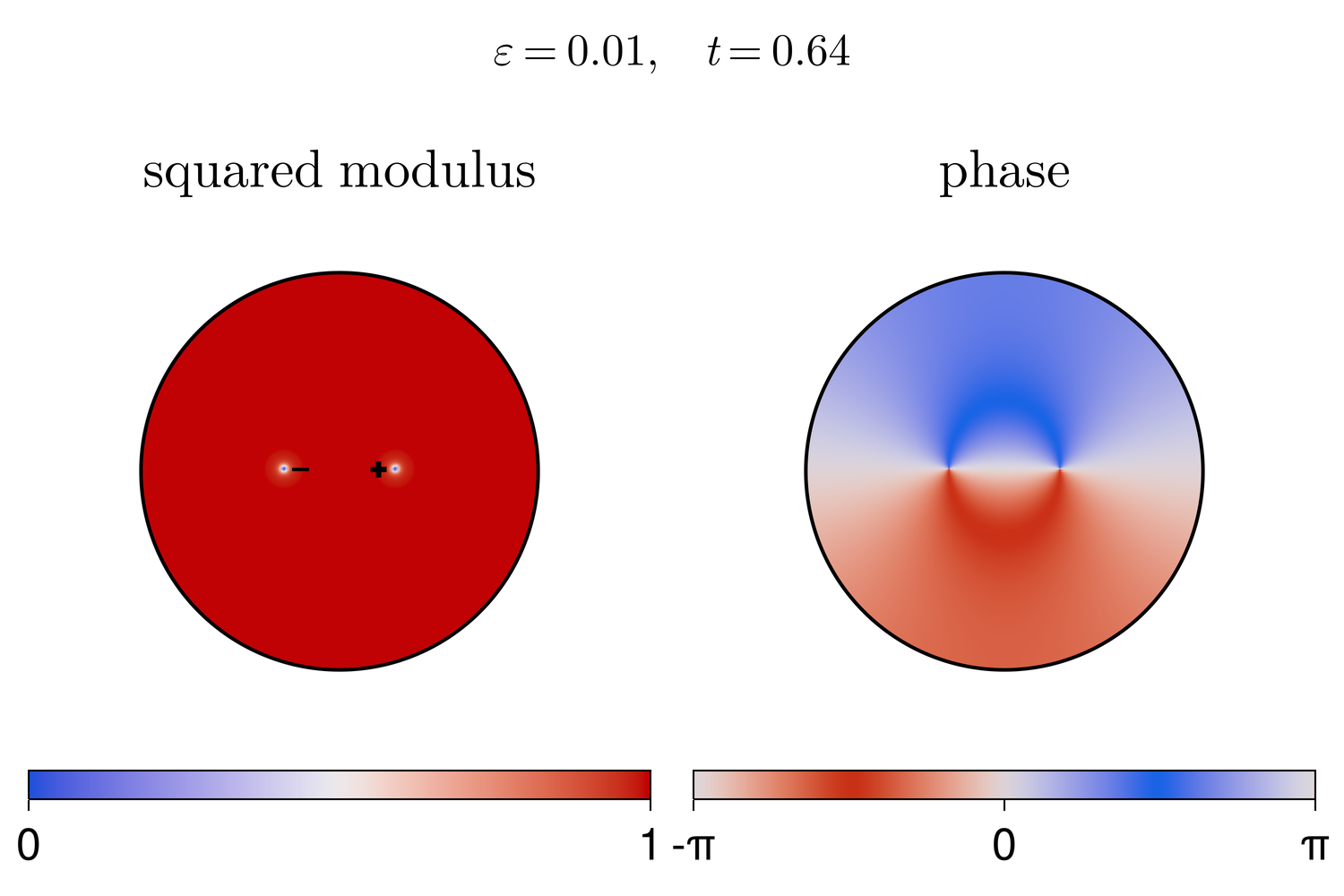}\hfill
  \includegraphics[width=0.48\linewidth]{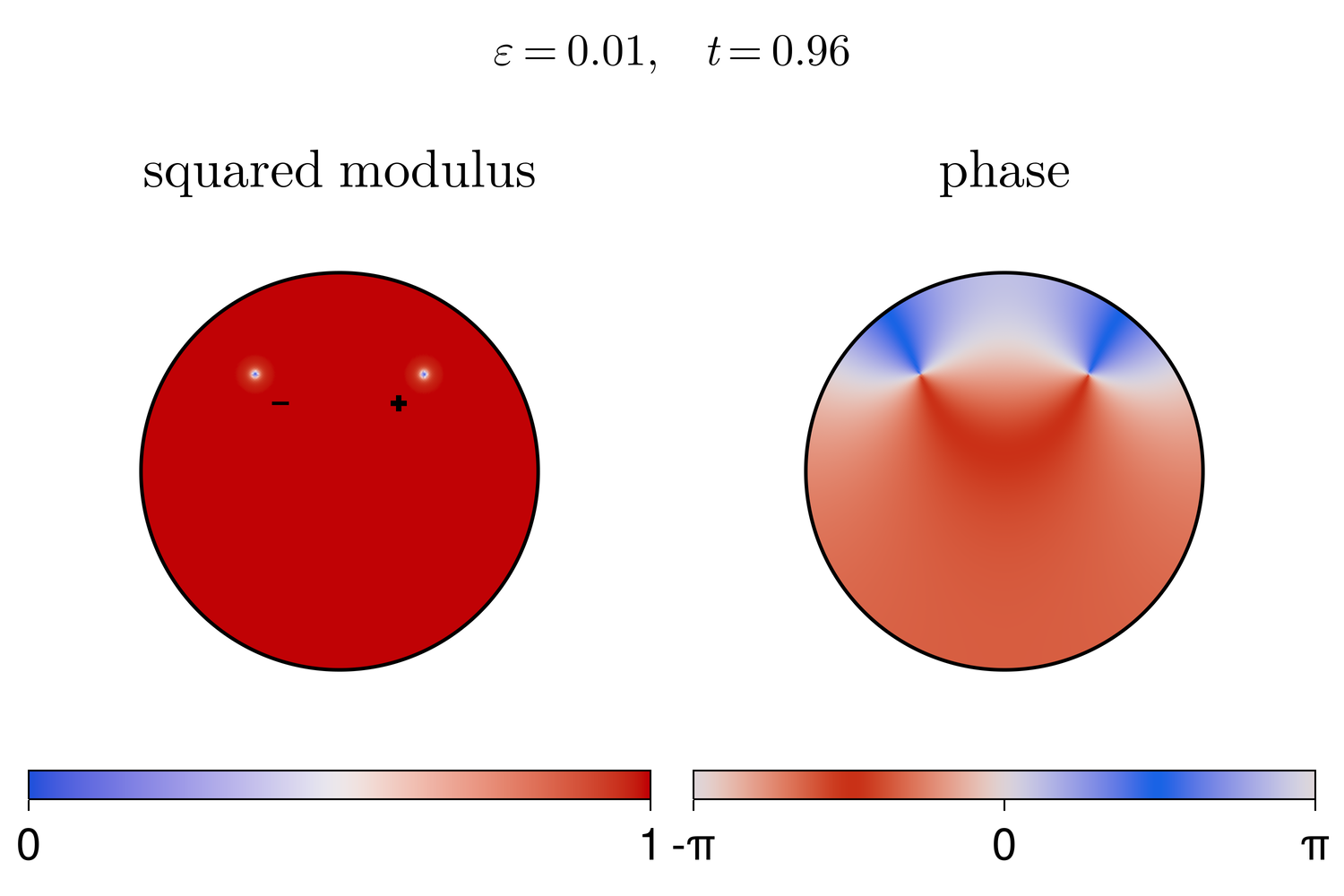}\\
  \caption{\textbf{Case 2}: Squared modulus and phase of
    $\psi_{\varepsilon}^*(t)$ for different times $t$.}
  \label{fig:smooth_case2}
\end{figure}

\begin{figure}[p!]
  \includegraphics[width=0.48\linewidth]{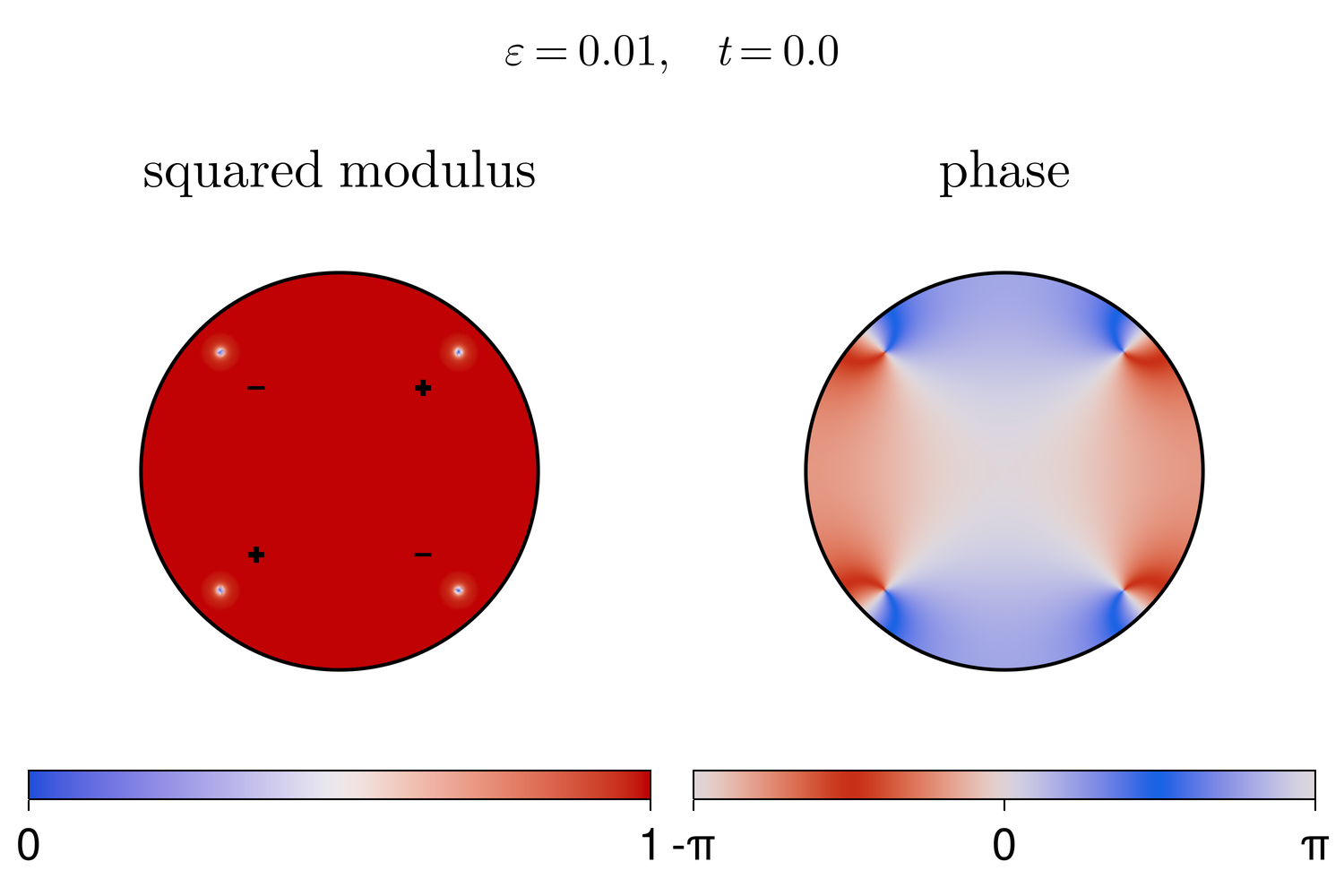}\hfill
  \includegraphics[width=0.48\linewidth]{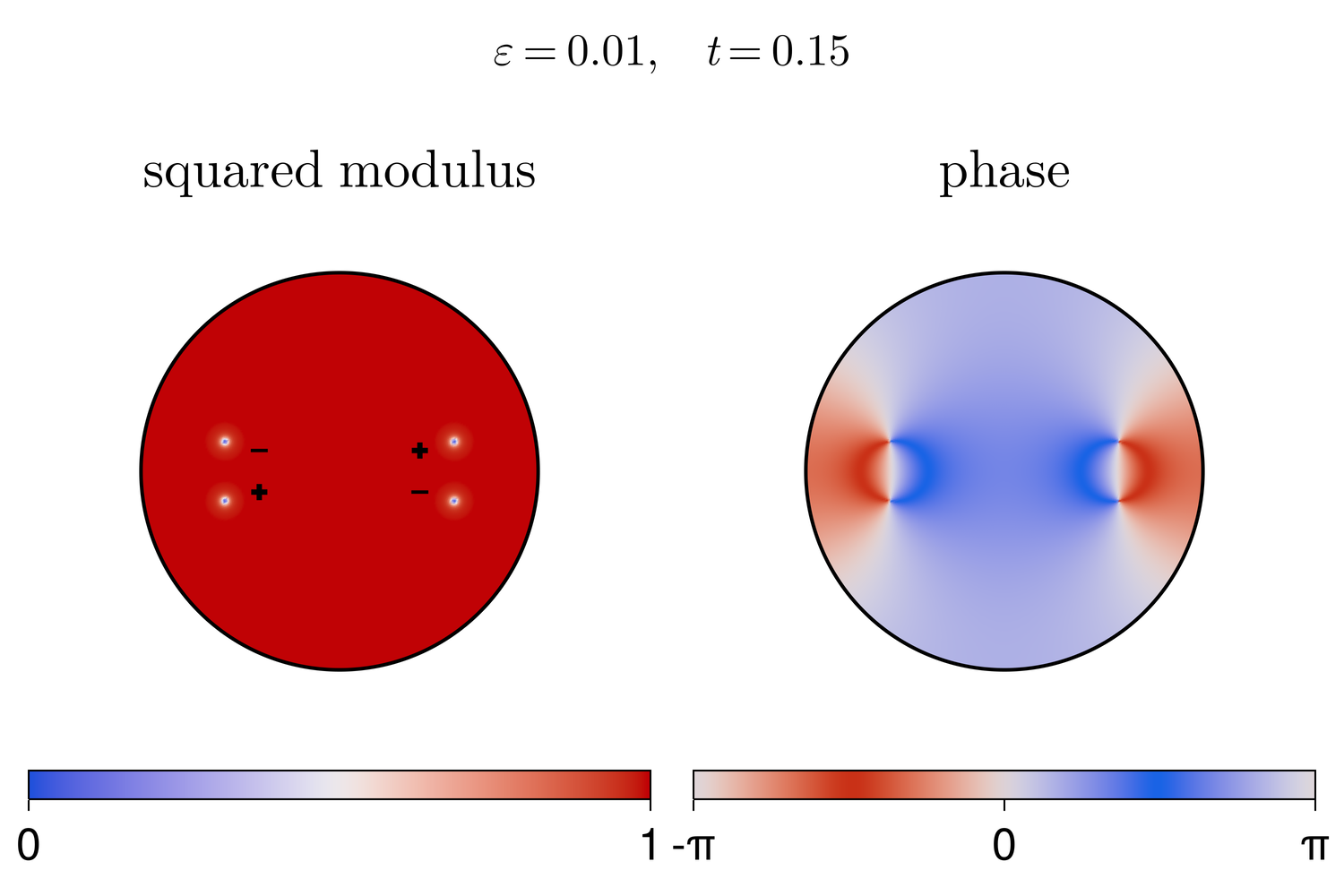}\\
  \includegraphics[width=0.48\linewidth]{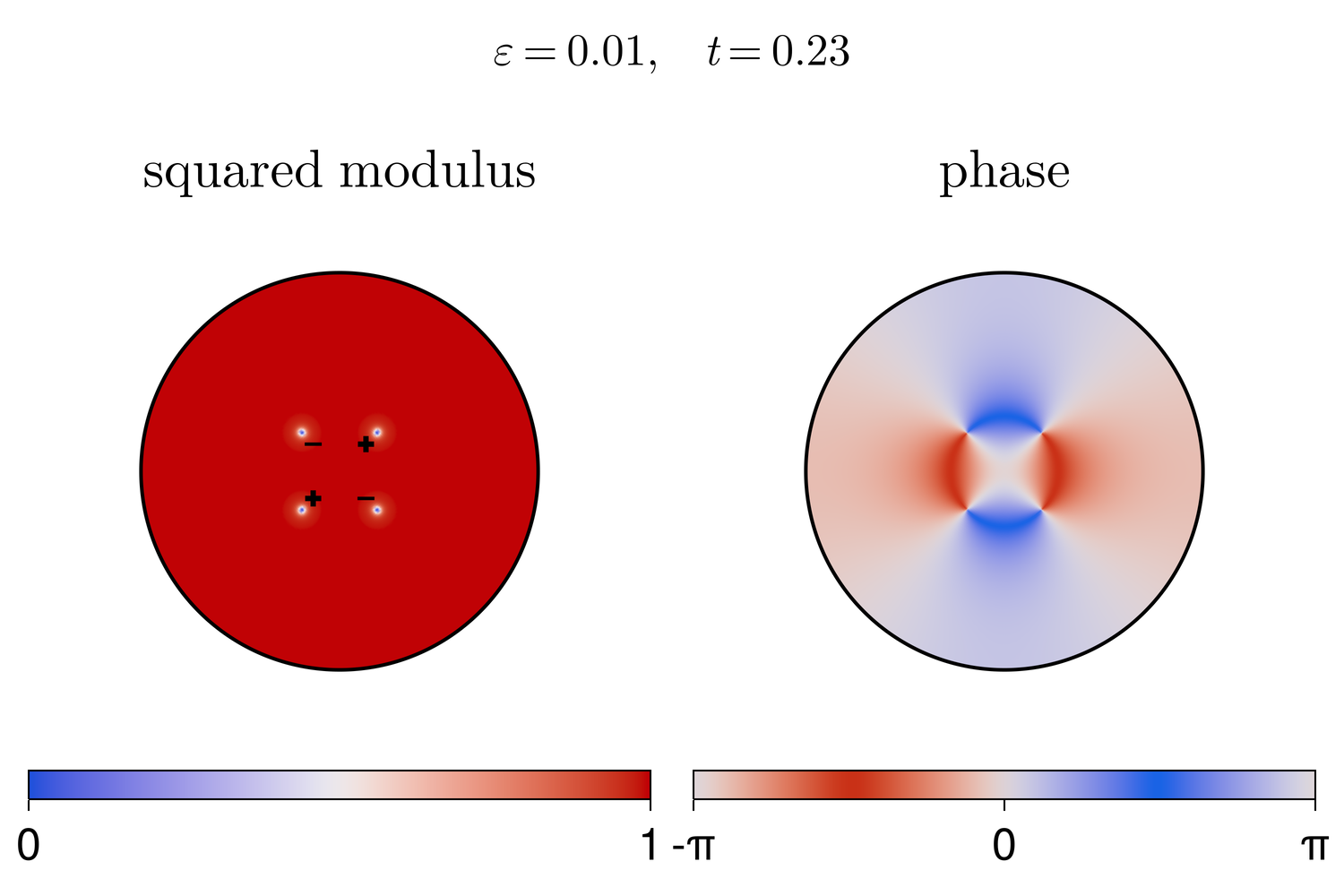}\hfill
  \includegraphics[width=0.48\linewidth]{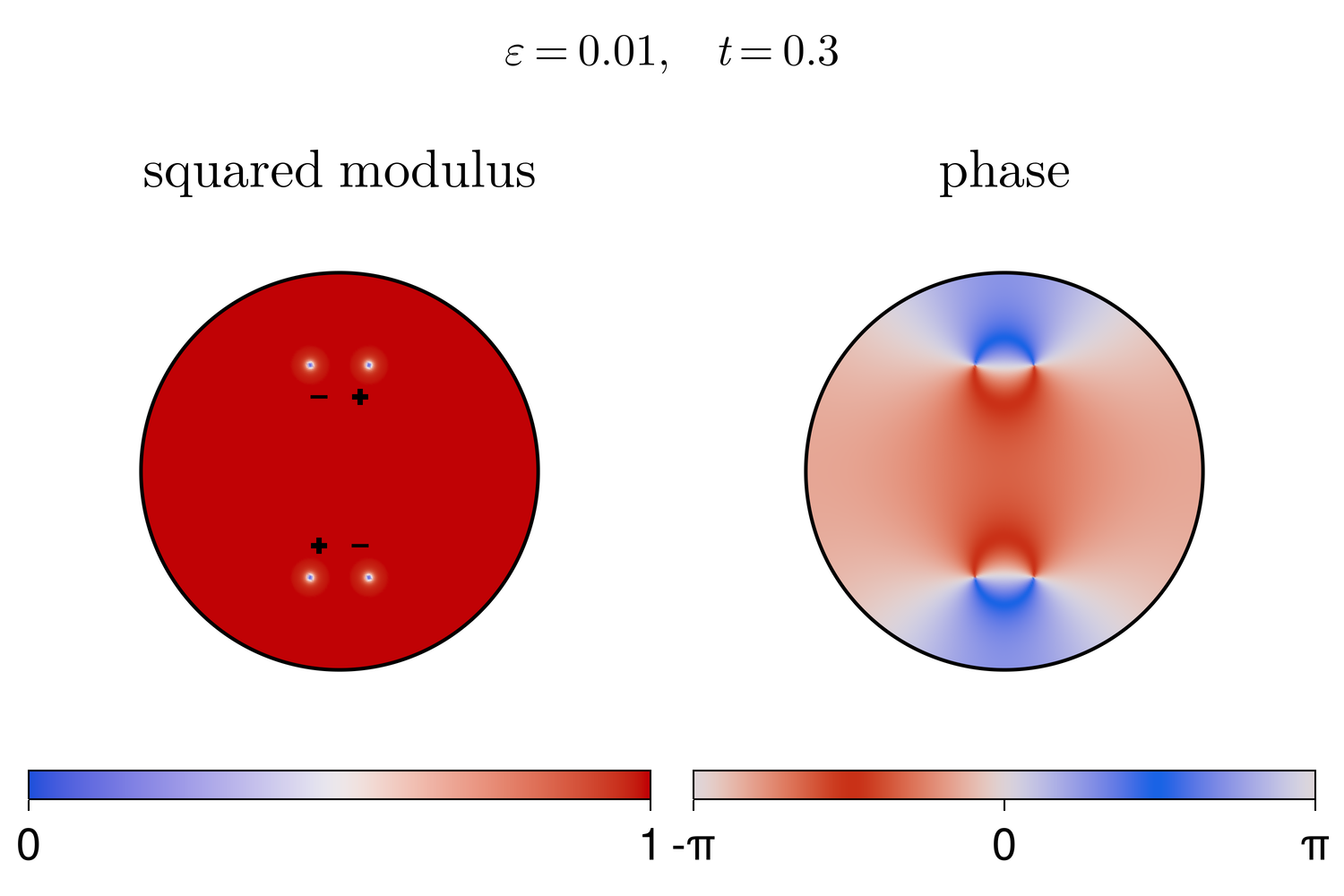}\\
  \caption{\textbf{Case 3}: Squared modulus and phase of
    $\psi_{\varepsilon}^*(t)$ for different times $t$.}
\end{figure}

\begin{figure}[p!]
  \includegraphics[width=0.48\linewidth]{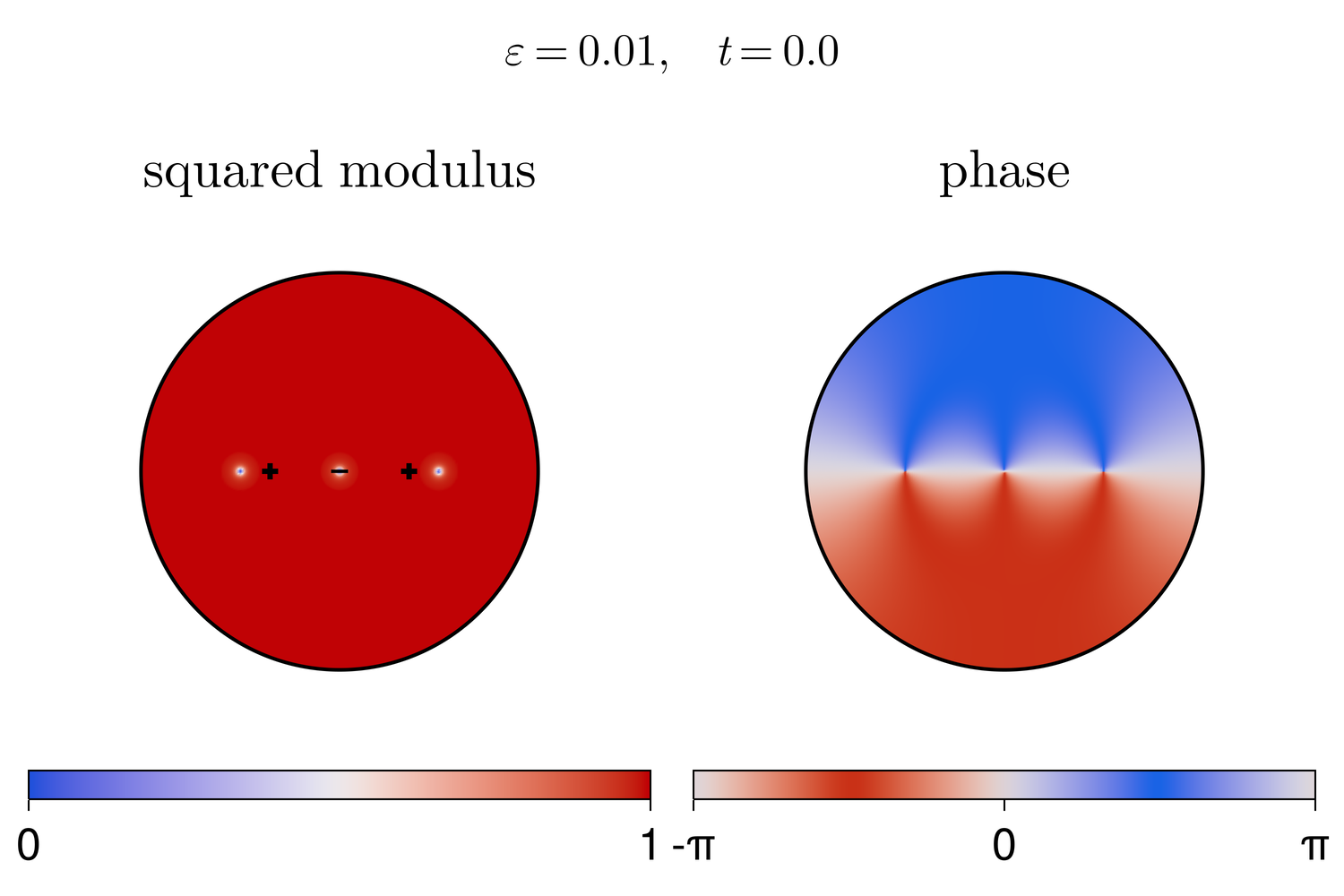}\hfill
  \includegraphics[width=0.48\linewidth]{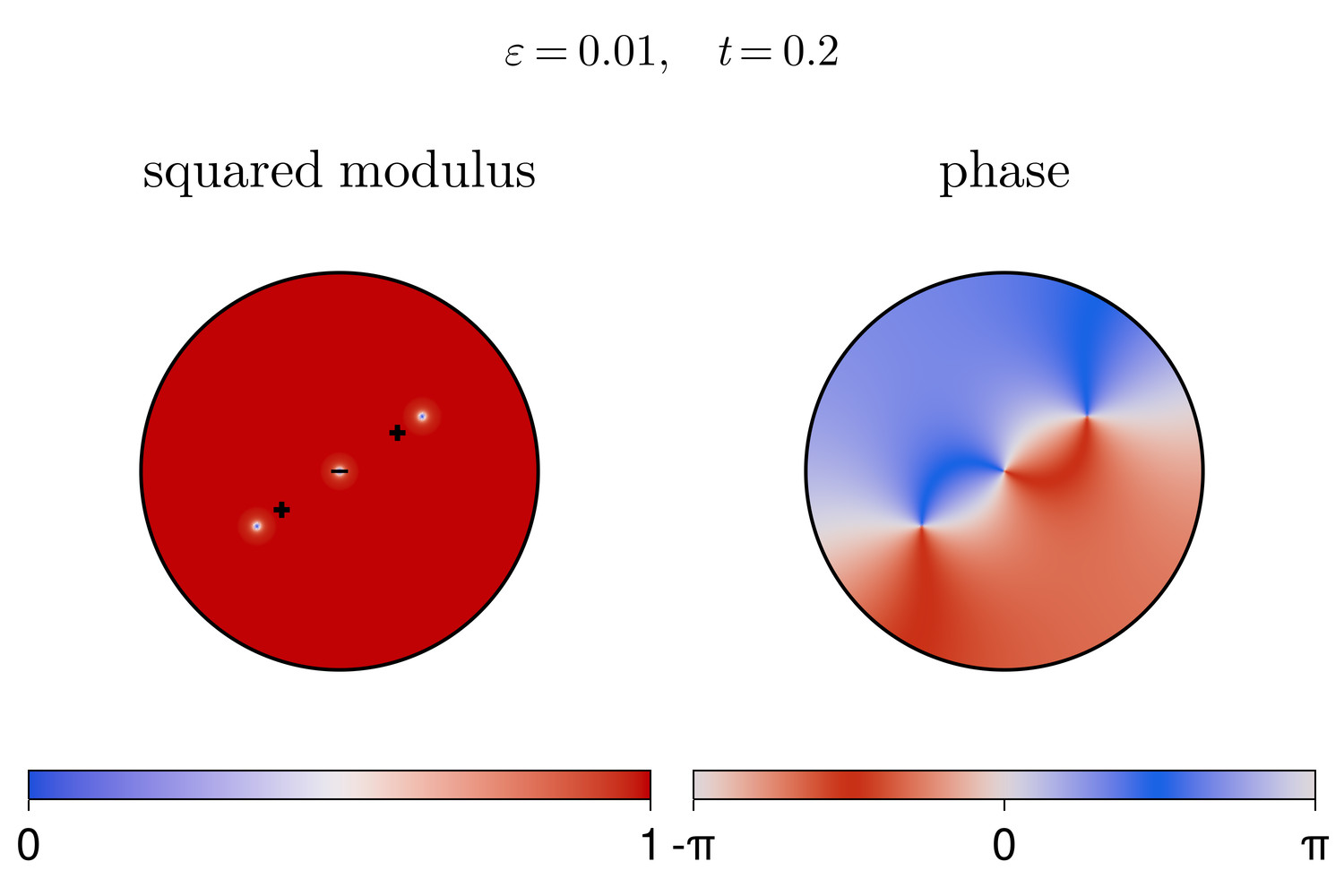}\\
  \includegraphics[width=0.48\linewidth]{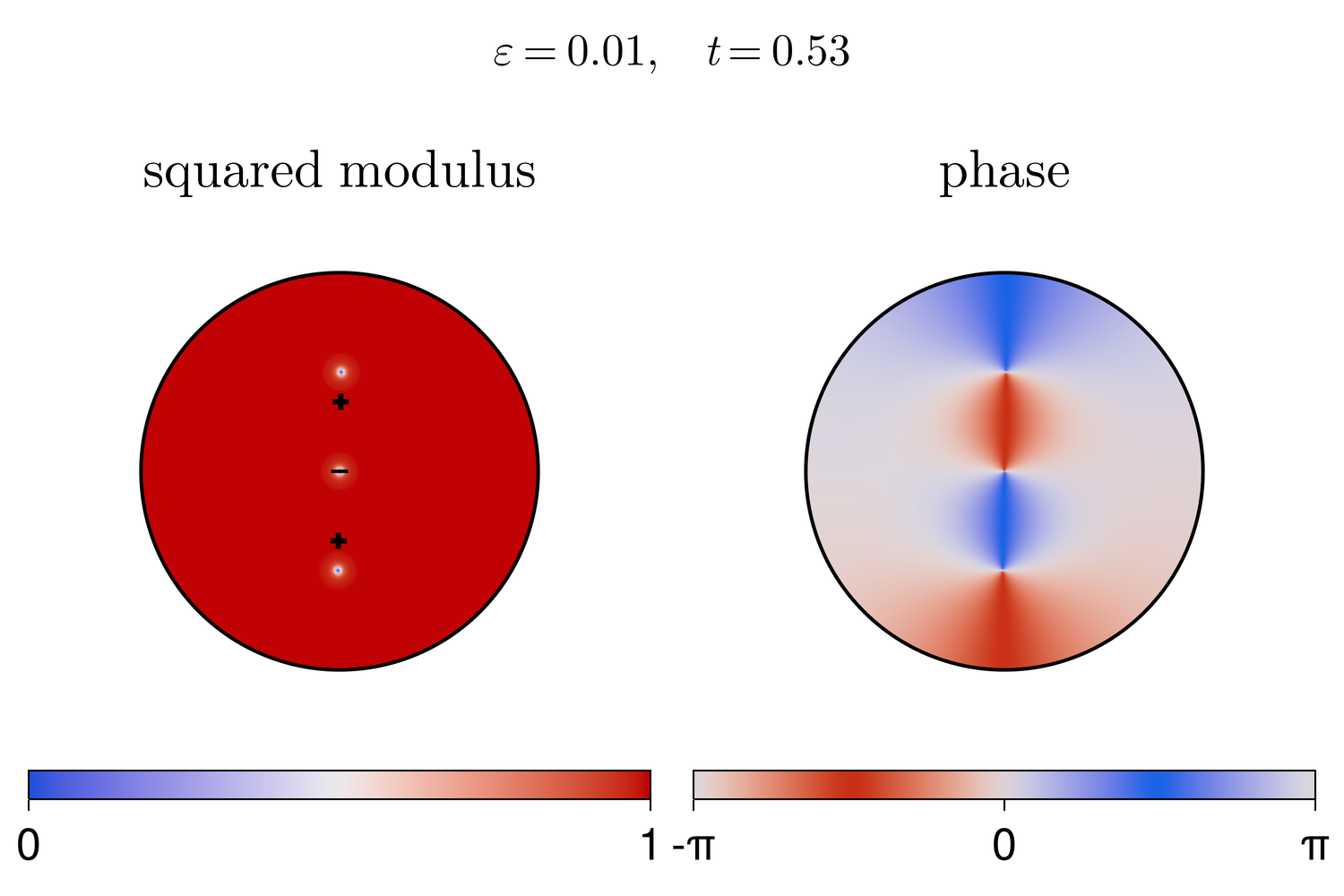}\hfill
  \includegraphics[width=0.48\linewidth]{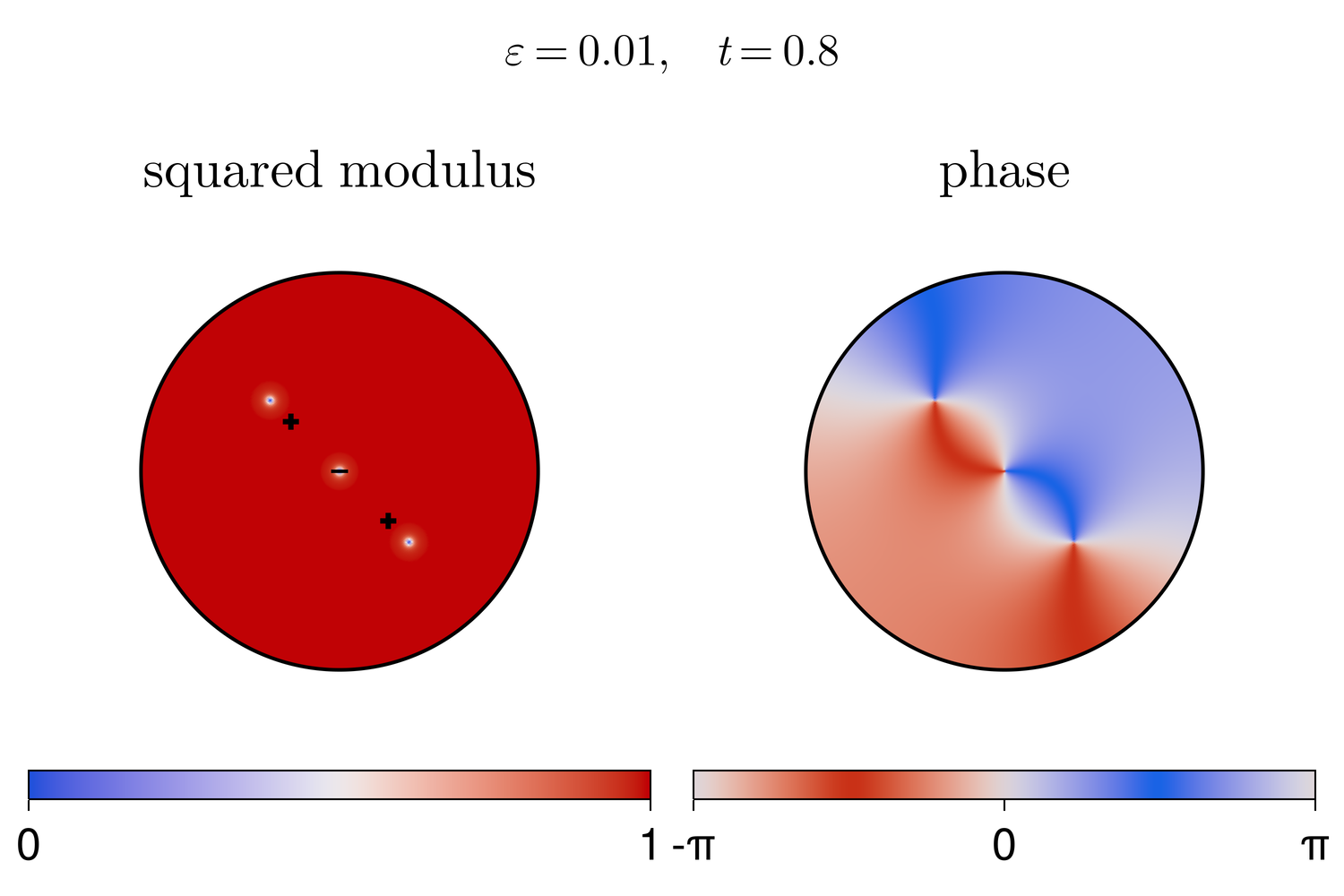}\\
  \caption{\textbf{Case 6}: Squared modulus and phase of
    $\psi_{\varepsilon}^*(t)$ for different times $t$.}
  \label{fig:smooth_case6}
\end{figure}

\section{Error control on the supercurrents}\label{sec:error}

We focus in this section on providing error estimates on the supercurrents in
the $L^{\frac43}$ norm from Theorem~\ref{thm:JS}. Precisely, we bound the error
between the supercurrents of the exact solution of \eqref{eq:GPE} and
the approximated solution obtained with our method in terms of the parameter
$\epsilon$ and the two discretization parameters, \ie the time step $\delta t$ and
the number of harmonic polynomials $n$.

To this end, let us denote by $\{ \bm{b}(t)\}_{ t \in \delta t \N}$ the
approximated trajectory for the Hamiltonian dynamics~\eqref{eq:Effective-equation2}
obtained via a RK4 ODE solver with time step $\delta
t$ and $n$ harmonic polynomials in the numerical
resolution of the PDE \eqref{eq:R} at each time step. More precisely, $\{\bm{b}(t)\}_{t \in \delta t \N}$ is the numerical solution of
\begin{align*}
  \begin{cases} \dot{\bm{b}}(t) = F^n(\bm{b}(t)), \\
    \bm{b}(0) = \bm{b}^0,
  \end{cases}
\end{align*}
where the approximate forcing term $F^n  = (F^n_1,...,F^n_N) : \Omega^N \rightarrow \R$ we evaluate at each time step is given by
\begin{align*}
  F_j^n(\bm{b}) = 2\J\biggr( \nabla_{x} R_n(x;\bm{b},d)\bigr\rvert_{x = b_j} +
  \sum_{k\neq j}^N d_k \frac{b_j-b_k}{|b_j-b_k|}\biggr) \quad \quad \mbox{for
    some } \{d_j\}_{j=1}^N \in \{\pm 1\}^N,
\end{align*}
with $R_n(\cdot;\bm{b},d): \Omega \rightarrow \R$ being the solution of
\begin{align*}
  \begin{cases}
    \Delta_x R_n(x;\bm{b},d) = 0,\quad\text{in }\Omega,\\
    \displaystyle R_n(x;\bm{b},d) =  - \P_n\biggr(\sum_{j=1}^N d_j\log|x -
    b_j|\biggr), \quad\text{on }\partial\Omega,
  \end{cases}
\end{align*}
where $\P_n$ is the $L^2(\partial \Omega)$-orthogonal projection on the bases of harmonic polynomials up to order $n$ on the boundary. Then using harmonic polynomials up to degree $n$ to approximate the
function $H$ in \eqref{eq:H_rebuilt}, the
reconstructed wave-function at time $t \in \delta t\N$ is given by
\begin{align}
  \psi_\epsilon^*(t) = \psi_\epsilon^*(\bm{b}(t),d) = \exp\prt{\i H_n}
  \prod_{j=1}^N f_{\varepsilon}(|\cdot - b_j(t)|)
  \prt{\frac{\cdot - b_j(t)}{|\cdot - b_j(t)|}}^{d_j}, \label{eq:reconstructedwave}
\end{align}
where $b_j(t) \in \Omega$ denotes the $j^{\rm th}$ component of $\bm{b}(t)$, and $H_n$ is the unique zero-mean harmonic function satisfying
\begin{equation}\label{eq:H_rebuilt_n}
  \begin{cases}
    \Delta H_n = 0 \quad\text{in } \Omega,\\ \displaystyle
    \partial_\nu H_n(x) = \P_n \biggr(\sum_{j=1}^N d_j \partial_\tau \ln|x-b_j(t)| \biggr),
    \quad\text{on } \partial\Omega.
  \end{cases}
\end{equation}
Let us also denote by $\{\bm{a}(t)\}_{t \geq 0}$ the exact solution of the
Hamiltonian dynamics~\eqref{eq:Effective-equation2}. Then the first result of this section
is an estimate on the distance $|\bm{a}(t) - \bm{b}(t)|$ in terms of the
discretization parameters $\delta t$ and $n$.

\begin{lemma}[Error estimate on Hamiltonian dynamics]\label{lem:ODEerror} Let
  $\Omega \subset \R^2$ be the unit disk, let $\{\bm{a}(t)\}_{t \geq 0}$ denote
  the exact solution to the Hamiltonian dynamics~\eqref{eq:Effective-equation2} with
  initial condition $\bm{a}^0 =(a_j^0)_{j=1,\dots,N}$ and degrees
  $(d_j)_{j=1,\dots,N}$. Let $\bm{b}(t)$ be the approximated ODE trajectory with
  the same degrees and initial position $\bm{b}^0 = (b_j^0)_{j=1,\dots N}$, with
  harmonic polynomials of degree up to $n$ and time step $\delta t$. Suppose
  that, for some $T>0$,
  \begin{align*}
    \rho_T = \min_{t \leq T} \min_{j \neq k} \{|b_j(t) - b_k(t)|, \mathrm{dist}(b_k(t),
    \partial \Omega) , |a_j(t)-a_k(t)|, \mathrm{dist}(a_j(t),\partial \Omega) \} > 0.
  \end{align*}
  is such that $0 < \rho_T < 1$. Then we have
  \begin{align}
    |\bm{a}(t)-\bm{b}(t)| \lesssim_{\ell,T} \biggr(|\bm{a}^0-\bm{b}^0| + (1-\rho_T)^{n} n^{1 - \ell} \biggr)  + \delta t^4 \quad \mbox{for any $t\in \{k \delta t \}_{k\in \N}$ with $t\leq T$} \label{eq:ODEest}
  \end{align}
  for any $\ell \in \N$, where the implicit constant depends on $\ell \in \N$
  and $T>0$ but is independent of $\bm{a}^0,\bm{b}^0, n, \delta t$ and $t$.
\end{lemma}
\begin{proof} The proof is immediate from the triangle inequality and Lemmas~\ref{lem:dyn_err} and Lemma~\ref{lem:disc_error} in the appendix.
\end{proof}

Combining the above lemma with the previous results, we can now estimate the
error between the supercurrents of the reconstructed solution
in~\eqref{eq:reconstructedwave} and the exact solution of the Gross--Pitaevskii
equation~\eqref{eq:GPE}. Note that the dependency in $\varepsilon$ is linear and
does no longer restrict the choice of the discretization parameters:
the smaller $\varepsilon$, the more accurate the estimate. The limiting
parameter is now the inter-vortices distances $\rho_T$, which appears with
an exponential dependency in the implicit constants of the following Theorem.
\begin{theorem}[Error estimate on supercurrents] \label{thm:errorestimate}
  Let $\Omega \subset \R^2$ be the unit disk, and
  $\{\psi_\epsilon(t)\}_{\epsilon < \epsilon_0}$ be the solution of
  \eqref{eq:GPE} with well-prepared initial conditions, in the sense of
  Definition~\ref{def:WP} for some initial vortices with positions $\bm{a}^0 =
  (a_j^0)_{j=1,\dots,N}$ and degrees $(d_j)_{j=1,\dots,N}$. Let $\bm{b}(t)$ be
  the approximated ODE trajectory with the same degrees and initial position
  $\bm{b}^0 = (b_j^0)_{j=1,\dots N}$, with harmonic polynomials of degree up to
  $n$ and time step $\delta t$.
  Let $\psi^*_\epsilon(t)$ be the reconstructed wave function described above.
  Suppose that, for some $T >0$,
  \begin{align*}
    \rho_T = \min_{t \leq T} \min_{j \neq k} \{|b_j(t) - b_k(t)|, \mathrm{dist}(b_k(t),
    \partial \Omega) , |a_j(t)-a_k(t)|, \mathrm{dist}(a_j(t),\partial \Omega) \} > 0.
  \end{align*}
  is such that $0 < \rho_T < 1$. Then we have
  \begin{align}
    \norm{j\bigr(\psi^*_\epsilon(t)\bigr) -
      j\bigr(\psi_\epsilon(t)\bigr)}_{L^{\frac43}(\Omega)} \lesssim_{\ell,T}
    \epsilon^\gamma + n^{-\ell+\frac12} +
    \sqrt{|\bm{a}^0-\bm{b}^0| + (1-\rho_T)^{n} n^{1 -
        \ell}  + \delta t^4},
    \label{eq:supercurrenterror}
  \end{align}
  for any $t\in [0,T]$ and $\ell \in \N$,
  where $0<\gamma \leq \frac12$ and the implicit constant depends on $\ell$ and $T>0$ but is
  independent of $\bm{a}^0$,  $\bm{b}^0$, $\epsilon, n$, and $\delta t$.
\end{theorem}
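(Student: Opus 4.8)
The goal is to bound the $L^{4/3}$ error between the supercurrent of the reconstructed wave function $\psi_\epsilon^*(t)$ (built from the approximate trajectory $\bm b(t)$) and that of the true GP solution $\psi_\epsilon(t)$. The natural strategy is a triangle inequality through two intermediate objects, inserting the exact-trajectory harmonic map $u^*(\bm a(t),d)$ as the pivot:
$$j(\psi_\epsilon^*(t)) - j(\psi_\epsilon(t)) = \big[j(\psi_\epsilon^*(t)) - j(u^*(\bm a(t),d))\big] + \big[j(u^*(\bm a(t),d)) - j(\psi_\epsilon(t))\big].$$

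Let me sketch the three conceptual sources of error I expect to combine.

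**Approach.** The plan is to split the total error into three pieces and bound each separately:

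\begin{enumerate}
\item \textbf{The PDE-vs-limit error.} The second bracket above, $\|j(u^*(\bm a(t),d)) - j(\psi_\epsilon(t))\|_{L^{4/3}}$, is controlled directly by the supercurrent estimate \eqref{eq:jac_estimate} from Theorem~\ref{thm:JS}, giving a contribution $\lesssim \epsilon^\gamma$. This is the genuinely analytic input and requires no new work.

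\item \textbf{The trajectory error.} The first bracket must be further split by inserting $j(u^*(\bm b(t),d))$ (the exact harmonic map at the \emph{approximate} positions) and $j(\psi_\epsilon^*(\bm b(t),d))$ where the boundary data for $R$ and $H$ is computed \emph{exactly} rather than via $\P_n$. The difference $\|j(u^*(\bm a(t),d)) - j(u^*(\bm b(t),d))\|_{L^{4/3}}$ measures sensitivity of the supercurrent to vortex displacement. Since $j(u^*)= -\nabla\times G$ with $G$ given by \eqref{eq:j_G} and the only singularities are the logarithmic ones at the $a_j$, one expects a bound of the form $\lesssim_{\rho_T} |\bm a(t) - \bm b(t)|^{\theta}$ for some $\theta$; the square-root in \eqref{eq:supercurrenterror} suggests $\theta = 1/2$, arising because the $L^{4/3}$ norm of the difference of two $1/|x-a|$-type singular fields located at nearby points behaves like (distance)$^{1/2}$ in two dimensions. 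Feeding in Lemma~\ref{lem:ODEerror} then produces the $\sqrt{|\bm a^0 - \bm b^0| + (1-\rho_T)^n n^{1-\ell} + \delta t^4}$ term.

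\item \textbf{The smoothing and discretization error.} Finally one bounds $\|j(\psi_\epsilon^*(\bm b(t),d)) - j(u^*(\bm b(t),d))\|_{L^{4/3}}$, which combines (a) the smoothing of the core by $f_\epsilon$, handled exactly as in Proposition~\ref{prop:L2_WP} via \eqref{eq:jest} with $p=4/3$ giving $\epsilon^{2/p - 1} = \epsilon^{1/2}$ (absorbed into $\epsilon^\gamma$ since $\gamma \le 1/2$), and (b) the spectral truncation error from replacing $H$, $R$ by their harmonic-polynomial approximations $H_n$, $R_n$. Since the boundary data is smooth (the vortices stay distance $\ge \rho_T$ from $\partial\Omega$), classical elliptic and Fourier estimates give spectral convergence; the factor $n^{-\ell + 1/2}$ reflects an $H^{1/2}(\partial\Omega)$-type trace estimate applied to the truncation error of smooth boundary data, where the $+1/2$ loss comes from passing from the boundary data to the gradient of the harmonic extension.
\end{enumerate}

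**Main obstacle.** I expect the hardest step to be item (2): quantifying how $j(u^*)$ depends on the vortex locations in the $L^{4/3}$ norm and extracting the correct exponent $1/2$. One must show that moving a single logarithmic singularity from $a_j$ to $b_j$ changes $\nabla\times G$ in $L^{4/3}$ by $O(|a_j - b_j|^{1/2})$, with the constant controlled by $\rho_T$ (so that singularities never collide and the boundary term $R$ stays smooth). This requires a careful local analysis near each vortex — splitting $\Omega$ into balls of radius $\sim\rho_T$ around the $a_j, b_j$ and a smooth exterior region — together with a uniform bound on the regular part $\nabla R$ via the explicit harmonic-polynomial representation. The spectral estimate in item (3), while technical, follows from standard approximation theory for analytic boundary data on the disk and should be routine once the geometric separation $\rho_T$ is fixed; it is presumably the content of Lemmas~\ref{lem:dyn_err} and~\ref{lem:disc_error} invoked in the appendix.
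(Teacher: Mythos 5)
Your proposal follows essentially the same route as the paper's proof: the same three-way splitting through $j(u^*(\bm a(t),d))$ and $j(u^*(\bm b(t),d))$, with Theorem~\ref{thm:JS} giving the $\epsilon^\gamma$ term, the $L^{4/3}$-sensitivity of the logarithmic singularities (exponent $1/2$, i.e.\ \eqref{eq:loginteriorest} with $p=4/3$) combined with Lemma~\ref{lem:ODEerror} giving the square-root term, and the smoothing estimate \eqref{eq:jest} plus spectral truncation of the boundary data giving $\epsilon^{1/2}$ and $n^{-\ell+1/2}$. The only detail you treat as routine that the paper handles explicitly is the identification $\nabla H_n = -\nabla\times R_n$, which requires showing that $\P_n$ commutes with the tangential derivative on the disk so that the Neumann data of $H_n$ and the Dirichlet data of $R_n$ remain compatible after projection.
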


\begin{proof} Throughout this proof we use various classical estimates, whose proofs are compiled in the appendix for the sake of presentation. At any time we can split the error into
  \begin{equation}\label{eq:triang}
    \begin{split}
      \norm{j(\psi_\varepsilon(t)) - j(\psi_\varepsilon^*(t))}_{L^{\frac43}} &\leq
      \norm{j(\psi_\varepsilon(t)) - j(u^*(\bm a(t),d))}_{L^{\frac43}} \\ &+
      \norm{j\bigr(u^*(\bm a(t), d)\bigr) - j\bigr(u^*(\bm b(t),d)\bigr)}_{L^{\frac43}}
      \\ &+ \norm{j\bigr(u^*(\bm b(t), d)\bigr) -
        j(\psi_\varepsilon^*(t))}_{L^{\frac43}},
    \end{split}
  \end{equation}
  where $\bm a(t)$ denotes the exact Hamiltonian trajectory of the vortices
  given by the solution of eq.~\eqref{eq:Effective-equation}. By Theorem~\ref{thm:JS}, the
  first term is
  bounded by $\lesssim \varepsilon^\gamma$ for some $0<\gamma<1$, which yields the first term in \eqref{eq:supercurrenterror}.

  The second term can be bounded using \eqref{eq:j_G} and combining estimates~\eqref{eq:loginteriorest} and \eqref{eq:error3est} in the appendix to obtain
  \begin{align*}
    \norm{j\bigr(u^*(\bm a(t), d)\bigr) - j\bigr(u^*(\bm b(t),d)\bigr)}_{L^{\frac43}}  &=
    \norm{\nabla G(x;\bm a(t),d) - \nabla G(x;\bm b(t),d)}_{L^{\frac43}}
    \\
    &\lesssim_{\rho_T} {|\bm a(t) - \bm b (t)|} + {|\bm a(t) - \bm b(t)|}^{\frac12},
  \end{align*}
  where the constant depends on $\rho_T$. Thus, by estimate~\eqref{eq:gronwalestdisk} in Lemma~\ref{lem:dyn_err} and Lemma~\ref{lem:disc_error} we find
  \begin{align}
    \norm{j\bigr(u^*(\bm a(t), d)\bigr) - j\bigr(u^*(\bm
      b(t),d)\bigr)}_{L^{\frac43}} &\lesssim_{\ell,T}
    \sqrt{\biggr(|\bm{a^0}-\bm{b^0}| + \frac{(1-\rho_T)^{n} n^{1 -
          \ell}}{\rho_T^{1+\ell}} T\biggr) e^{Ct/\rho_T^{5/2}} + \delta t^4}
    \\
    &\lesssim_{\ell,T} \sqrt{|\bm{a^0}-\bm{b^0}| + (1-\rho_T)^{n} n^{1 - \ell}+ \delta t^4}.
    \label{eq:err_j_canon}
  \end{align}
  where the implicit constant depends on $\ell \in \N$ and $T>0$ but is
  independent of $\bm{a}^0, \bm{b}^0, \delta t$ and $n$. This yields the last term
  in \eqref{eq:supercurrenterror}.

  To bound the third term, notice that the reconstructed wave function
  $\psi_\varepsilon^*$ is obtained after an additional approximation in the
  resolution of Laplace's equation \eqref{eq:H_rebuilt} that defines the phase factor
  $H$. Therefore, by the triangle inequality we find
  \begin{align*}
    \norm{j\bigr(u^*(\bm b(t), d)\bigr) -
      j(\psi_\varepsilon^*(t))}_{L^{\frac43}}
    &\leq \norm{j\bigr(u^*(\bm b(t), d)\bigr) -j\bigr(u_n^*(\bm b(t), d)\bigr) }_{L^{\frac43}} + \norm{j\bigr(u_n^*(\bm b(t), d)\bigr) - j\bigr(\psi_\varepsilon^*(t)\bigr)}_{L^{\frac43}},
  \end{align*}
  where $u^*_n(\bm b(t),d)$ is the approximated canonical harmonic map
  \begin{align*}
    u_n^*(\bm b(t), d) = \exp\prt{\i H_n} \prod_{j=1}^N \prt{\frac{\cdot -
        b_j(t)}{|\cdot - b_j(t)|}}^{d_j} \quad \mbox{with $H_n$ defined in \eqref{eq:H_rebuilt_n}.}
  \end{align*}
  Since $u_n^*(\bm{b}(t),d)$ and $u^*(\bm{b}(t), d)$ have the same
  singularities, from H\"older's inequality we have
  \begin{align*}
    \norm{j\bigr(u^*(\bm b(t), d)\bigr) -j\bigr(u_n^*(\bm b(t), d)\bigr)
    }_{L^{\frac43}} \lesssim \norm{\nabla H - \nabla H_n}_{L^2}.
  \end{align*}
  {We now note that the projection $\P_n$ on the first $2n+1$
    Fourier modes commutes with the tangential derivative since, using that the
    tangential and angular derivatives coincides when $\Omega$ is the unit disk,
    we have
    \begin{align}
      \partial_\theta (\P_n g)(e^{\i\theta}) &= \partial_\theta
      \biggr(\sum_{|k|\leq n} e^{\i k \theta} \frac1{2\pi}\int_0^{2\pi} e^{-\i k x} g(e^{\i x}) \mathrm{d}x\biggr) \nonumber \\
      &= \sum_{|k|\leq n} e^{\i k \theta} \frac1{2\pi}\int_0^{2\pi} \i k e^{-\i kx} g(e^{\i x}) \mathrm{d} x\nonumber \\
      &= \sum_{|k|\leq n} e^{\i k \theta} \frac1{2\pi}\int_0^{2\pi} e^{-\i kx} \partial_x g(e^{\i x}) \mathrm{d} x = \P_n(\partial_\theta g)(e^{i\theta}), \quad
      \mbox{for $g \in C^1(\partial \Omega),$} \label{eq:commuting}
    \end{align}
    where we used integration by parts in the second line. As a consequence, not only $\nabla H = -\nabla\times R$ for the function $R$ defined in eq.~\eqref{eq:R} (with $a_j$ replaced by $b_j(t)$), but also $\nabla H_n = -\nabla\times R_n$, where $R_n$ solves
    \begin{align*}
      \begin{dcases} \Delta R_n = 0 \quad &\mbox{ in $\Omega$,} \\
        R_n = - \P_n\biggr(\sum_{j=1}^N d_j \log |x-b_j(t)|\biggr), &\quad \mbox{on $\partial \Omega$.} \end{dcases}
    \end{align*}
    Indeed, from straightforward calculations and the commutation property in
    \eqref{eq:commuting}, one can verify that $\widetilde{j} = \nabla H_n + \nabla
    \times R_n$ solves the equation
    \begin{align*}
      \nabla \times \widetilde{j} = 0 , \quad \nabla \cdot \widetilde{j} = 0, \quad \widetilde{j} \cdot \nu= 0,
    \end{align*}
    which implies $\nabla H_n = - \nabla \times R_n$. So from estimates~\eqref{eq:classicaldiskest2} and~\eqref{eq:logspectralconv} in the appendix, we obtain
    \begin{align*}
      \norm{\nabla H - \nabla H_n}_{L^2} = \norm{\nabla R - \nabla R_n}_{L^2}
      \lesssim_\ell \frac{n^{-\ell + \frac12}}{\rho_T^{\ell-\frac12}} \lesssim_{\ell,T} n^{-\ell+\frac12}, \quad \mbox{for $\ell \in \N$.}
    \end{align*}}
  Finally, we can repeat the same steps in the proof of \eqref{eq:jest} to obtain
  \begin{align*}
    \norm{j\bigr(u_n^*(\bm b(t), d)\bigr) - j\bigr(\psi_\varepsilon^*(t)\bigr)}_{L^{\frac43}}\lesssim \epsilon^{\frac12},
  \end{align*}
  which can be absorbed by the term $\epsilon^{\gamma}$ in \eqref{eq:supercurrenterror} and completes the proof.
\end{proof}

\section{Numerical study of the approximation error}

In this section, we analyze the efficiency and the convergence of the numerical method presented in
this paper. First, we focus on the numerical resolution
of the reduced dynamical law \eqref{eq:Effective-equation2} where we illustrate the
convergence of the vortex trajectories. Next, we study the numerical error on
the supercurrents in order to illustrate Theorem~\ref{thm:errorestimate}.

\subsection{Numerical convergence of the Hamiltonian dynamics}

We study here the convergence of the vortex trajectories obtained by
solving the Hamiltonian dynamics obtained from the singular limit
$\varepsilon\to0$. To this end, we numerically solve
\eqref{eq:Effective-equation2} for various initial conditions given by two vortices of
degree $+1$ at position $(\pm m,0)$ for $m \in \set{0.1, 0.3, 0.6, 0.9}$:
we expect for each case a circular trajectory on the circle of radius $m$
centered at $(0,0)$. We use a RK4 solver and the harmonic polynomials basis
described in Section~\ref{sec:HD_sim}.
For each of these cases, we study the convergence of the trajectories when
$\delta t\to 0$ and $n\to +\infty$: according to Lemma~\ref{lem:ODEerror},
we respectively expect convergence of order 4 and spectral convergence.
In Figure~\ref{fig:error_HD}, we plot the error
$|\bm{a}(T) - \bm{b}(T)|$ at $T=1$ for the following setting:
\begin{itemize}
  \item Convergence of the approximate trajectory $(\bm{b}(t))_{0\leq t\leq T}$
    with respect to \[n\in\set{4, 8, 16, 32, 64}\] towards
    a reference trajectory $(\bm{a}(t))_{0\leq t\leq T}$ obtained with $n=256$ and $\delta t = 10^{-5}$.
  \item Convergence of the approximate trajectory $(\bm{b}(t))_{0\leq t\leq T}$
    with respect to \[\delta t\in\set{10^{-2},7.5\cdot10^{-3}, 5\cdot10^{-3},
        2.5\cdot10^{-3}, 1\cdot10^{-3}}\] towards a
    reference trajectory $(\bm{a}(t))_{0\leq t\leq T}$ obtained with $n=256$ and $\delta t = 10^{-5}$.
\end{itemize}
The convergence plots are compatible with a spectral rate for $n$ (even though a
transition to a polynomial regime cannot be ruled out, the log-lin plot from
Figure~\ref{fig:error_HD} suggests an exponential convergence, which is
compatible with a spectral convergence) and a $4$-th order rate for
$\delta t$. Moreover, the pre-factors are much higher for $m =
0.1$ and $m = 0.9$: in these cases, the vortices are respectively close to
each other and close to the boundary. The dependency of the pre-factors on the
quantity $\rho_T$ (highlighted in Lemmas~\ref{lem:dyn_err} and
\ref{lem:disc_error}) is thus amplified, as verified in
Figure~\ref{fig:error_HD}.

\begin{figure}[h!]
  \centering
  \includegraphics[width=0.48\linewidth]{./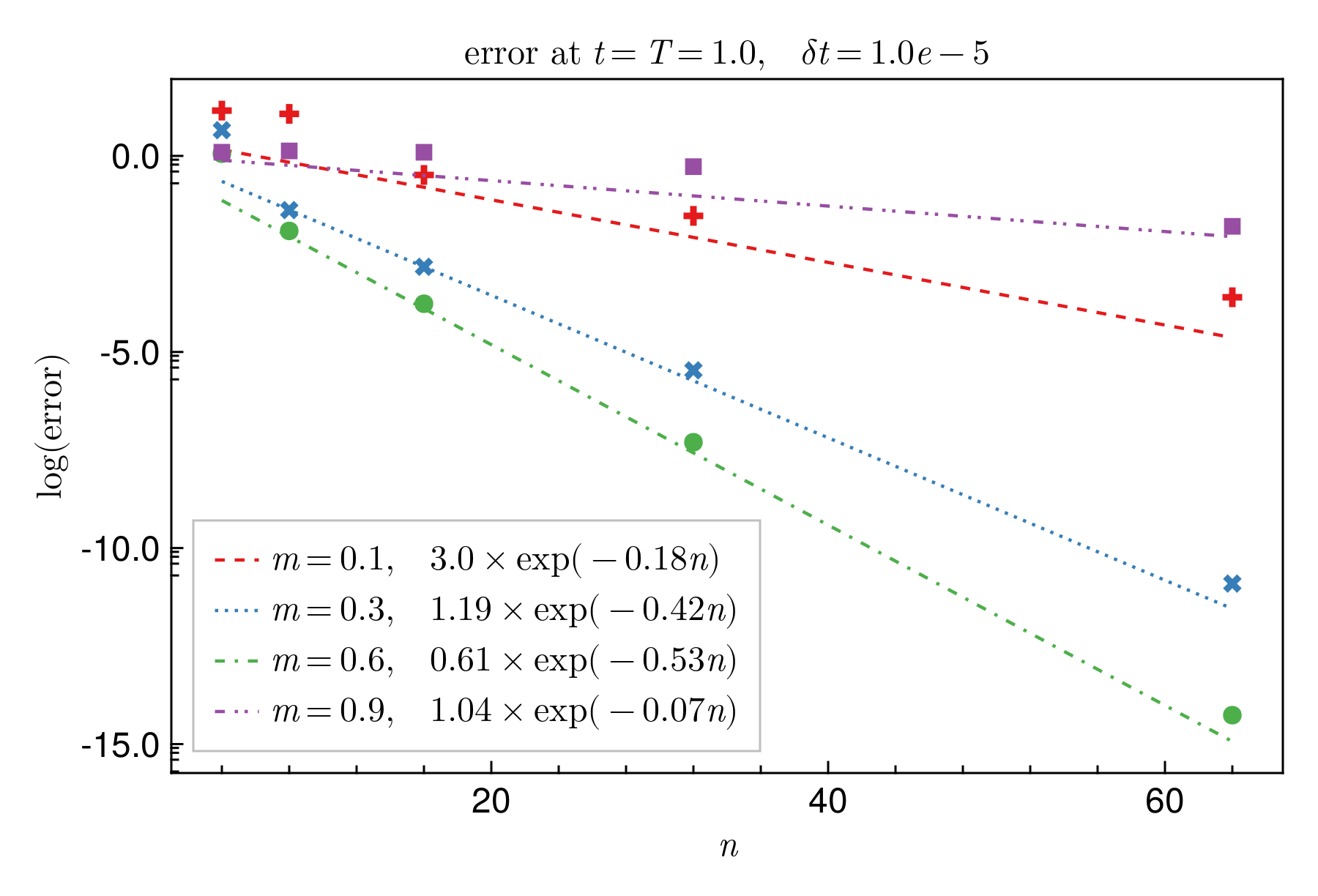}\hfill
  \includegraphics[width=0.48\linewidth]{./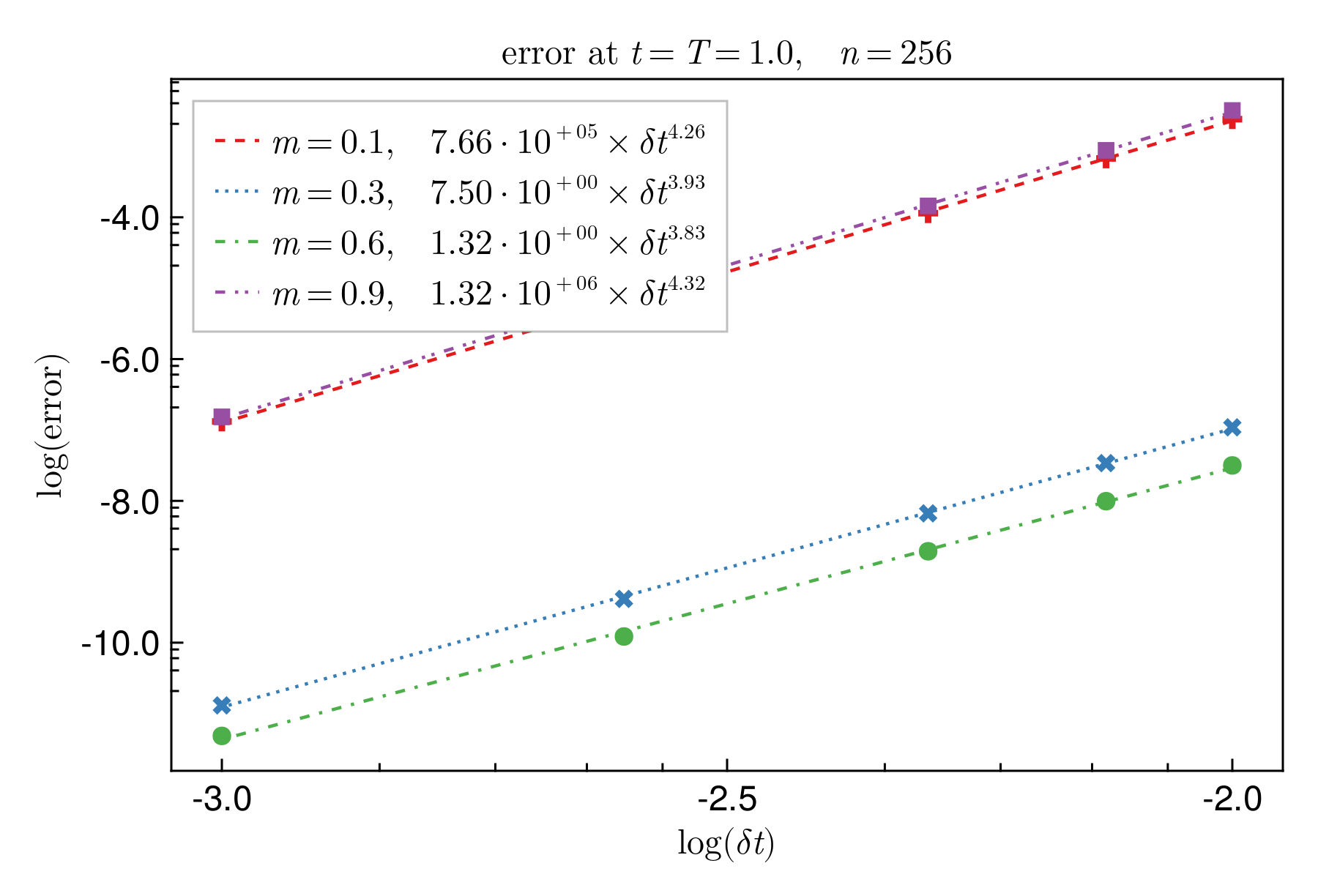}
  \caption{Convergence of the numerical resolution of the reduced dynamical law
    \eqref{eq:Effective-equation}. (Left) Convergence with respect to $n$: as expected,
    what we observe is compatible with a spectral convergence. (Right)
    Convergence with respect to $\delta t$: as expected, the convergence is
    algebraic of order about $4$.}
  \label{fig:error_HD}
\end{figure}

\subsection{Numerical convergence of the vortex-tracking method}

We focus now on Case 1: there are two vortices with identical winding number
$+1$ and initial position $(\pm0.5,0)$. From Figure~\ref{fig:trajectories}, we
know that we expect, for small values of $\varepsilon$, the vortices to move
around the circle of radius $0.5$ and centered at $(0,0)$.

\subsubsection{Computational framework}

In order to compare
the numerical method presented in Section~\ref{sec:method} to the exact solution
of the GP equation \eqref{eq:GPE}, we compute a reference solution
$\psi_\varepsilon(t)$ with the finite element solver
\texttt{Gridap.jl} \cite{badiaGridapExtensibleFinite2020} and meshing tool GMSH
\cite{geuzaineGmsh3DFinite2009}. The GP equation \eqref{eq:GPE}
is solved for initial conditions given by the vortex configuration from Case 1
and the smoothing procedure described in Lemma~\ref{lem:WP}.
We use a Strang splitting scheme, with $\P^1$ Lagrangian finite
elements for the spatial discretization. The mesh
size is of order $h=5\cdot10^{-2}$ with local \emph{a priori} refined mesh on the vortex
trajectory with mesh size $h=2\cdot10^{-3}$, see Figure~\ref{fig:mesh}.
The time step is $\delta t=10^{-5}$. The finite
element solver is then used to obtain various reference solutions
$\psi_\varepsilon$ for $\varepsilon \in \set{0.03, 0.05, 0.07, 0.1}$, up to
$T=1$. We also compute a reference solution for $\varepsilon = 0.01$, with
time step $\delta t = 10^{-6}$. Note that the simulation with such a mesh can
take several days for small values of $\varepsilon$: for instance, in our case, the simulations for $\varepsilon = 0.01$ took about a week to run with a non-optimized code on a small sized cluster.

The reconstructed approximation $\psi_{\varepsilon}^*(t)$ from
Section~\ref{sec:method} is obtained
from the trajectories in the limit $\varepsilon\to0$, with $r_0 = 0.3$ and
$n=128$ for the reconstruction step. The Hamiltonian dynamics is solved
numerically with a RK4 method, with time step $\delta t = 10^{-5}$ and $n=128$.
Note that this time, the simulation only takes a few seconds on a personal
laptop.

\begin{figure}[h!]
  \centering
  \includegraphics[width=0.5\linewidth]{./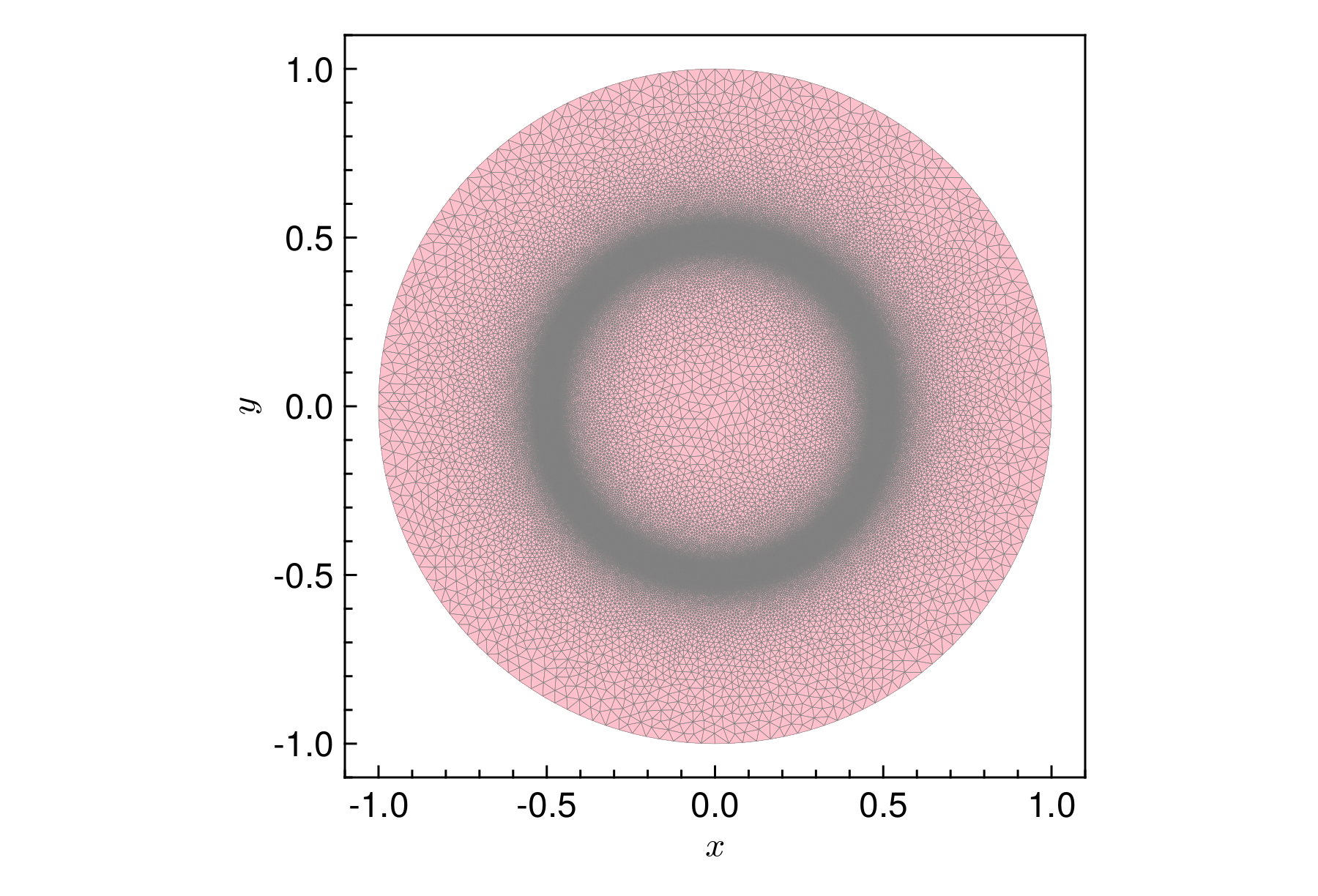}
  \caption{Mesh of the unit disk used to obtain reference solutions.}
  \label{fig:mesh}
\end{figure}

\subsubsection{Numerical illustration of the singular limit $\varepsilon\to0$}

We evaluate different quantities in order to formally compare the
reference solution $\psi_\varepsilon(t)$ and the reconstructed approximation
$\psi_\varepsilon^*(t)$. First, by tracking its isolated zeros, for instance
using one of the algorithms introduced in
\cite{dujardinNumericalStudyVortex2022,kaltIdentificationVorticesQuantum2023}, we are
able to localize the vortices of the reference solution $\psi_\varepsilon(t)$
and plot the associated trajectories in Figure~\ref{fig:traj_eps}, in the spirit of
the study realized in \cite{baoNumericalStudyQuantized2014}. We observe similar
results and, in particular, the oscillations of the vortex trajectories for
$\varepsilon>0$ around the limiting trajectory. In Figure~\ref{fig:error_eps},
we represent the distance between vortices $\norm{{\bm a}(t) -
  P_h(\psi_\varepsilon(t))}$ as $t$ goes from 0 to $T$, where $P_h$ is the
vortex localization operator on the finite element mesh. As expected, this error becomes small when
$\varepsilon\to0$. Then, we analyze the (relative) errors on the supercurrents and the
wave function (in $L^2$ and $H^1$ norms), \ie
\begin{equation*}
  \frac{\norm{j(\psi_\varepsilon(t)) - j(\psi_{\varepsilon}^*(t))}_{L^{\frac43}}}
  {\norm{j(\psi_\varepsilon(t))}_{L^{\frac43}}}, \quad
  \frac{\norm{\psi_\varepsilon(t) - \psi_{\varepsilon}^*(t)}_{L^2}}
  {\norm{\psi_\varepsilon(t)}_{L^2}} \quad\text{and}\quad
  \frac{\norm{\nabla \psi_\varepsilon(t) - \nabla \psi_{\varepsilon}^*(t)}_{L^2}}
  {\norm{\nabla \psi_\varepsilon(t)}_{L^2}}.
\end{equation*}
Again, these quantities are smaller for small values of $\varepsilon$, as expected.

Finally, we compare in Figure~\ref{fig:error_vs_canon} the relative error
in the $L^2$ norm when approximating $\psi_\varepsilon(t)$ by
$\psi_{\varepsilon}^*(t)$ or by the canonical harmonic map $u^*(\bm a(t), \bm
d)$ as well as for the supercurrents. The error achieved by the reconstructed wave
function is slightly better than the one achieved by the canonical harmonic map
with singularities located at ${\bm a}(t)$. Regarding
the supercurrents, the reconstruction procedure seems to improve significantly the
approximation.

\begin{figure}[p!]
  \centering
  \includegraphics[width=0.45\linewidth]{./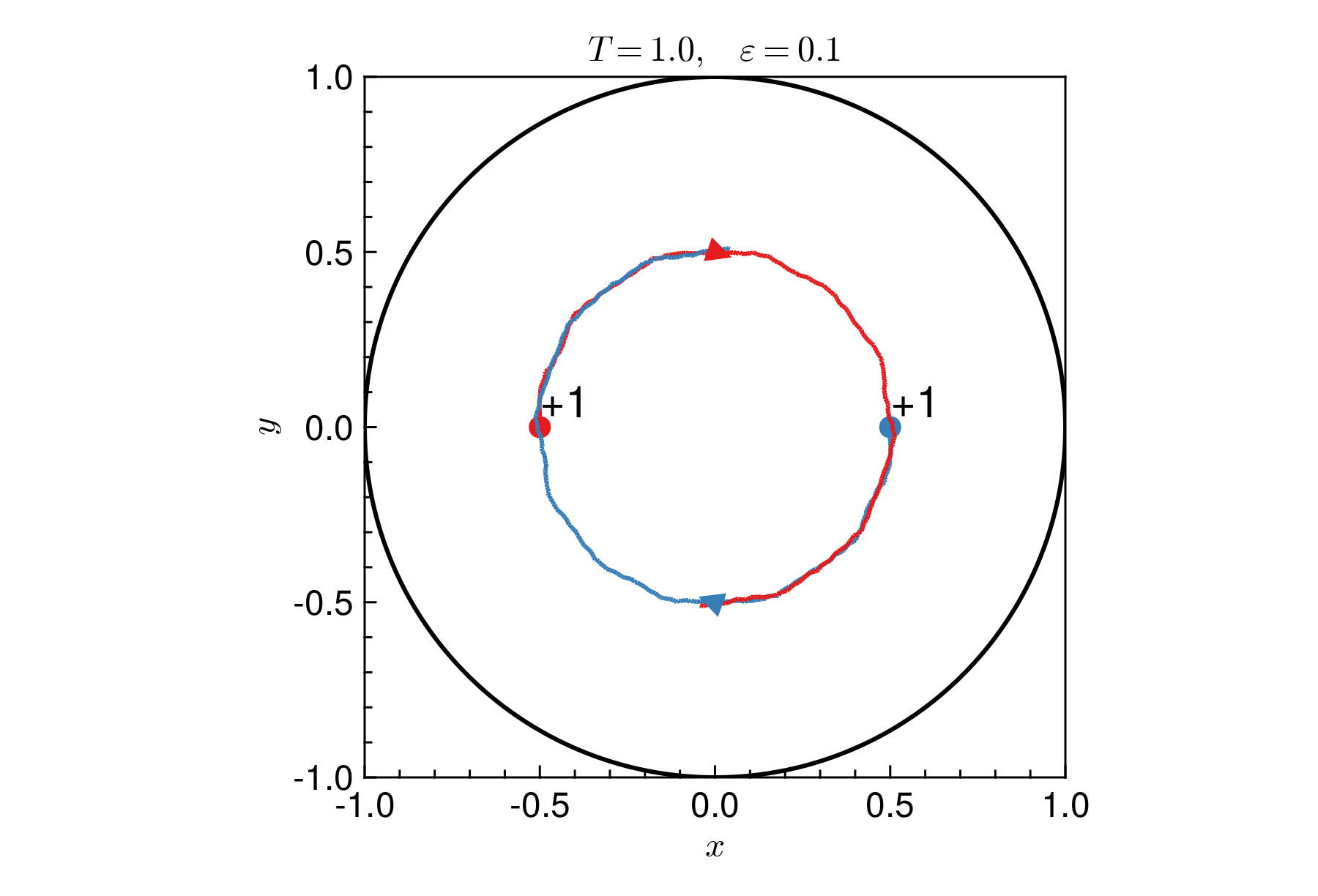}\hfill
  \includegraphics[width=0.45\linewidth]{./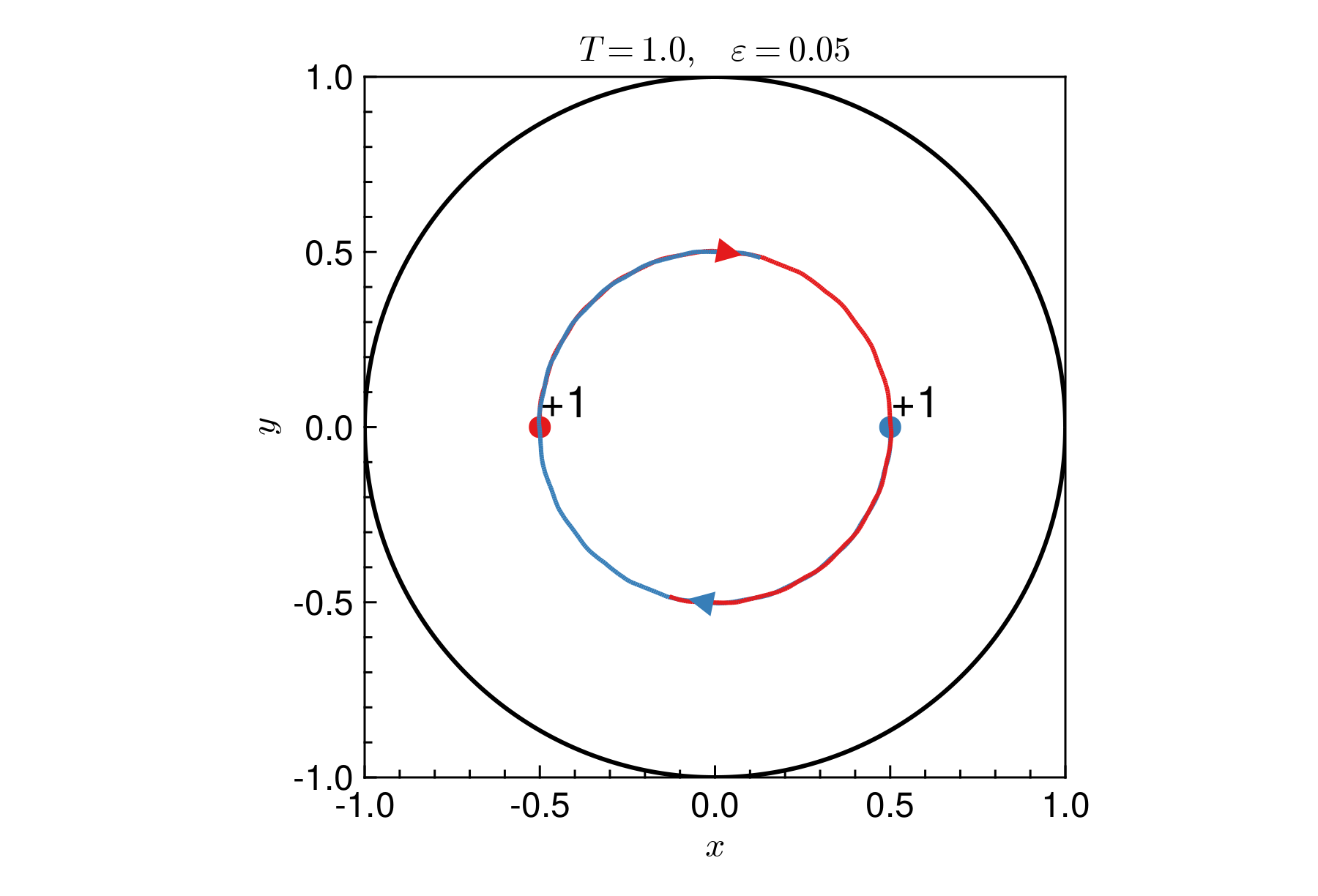}\\
  \includegraphics[width=0.45\linewidth]{./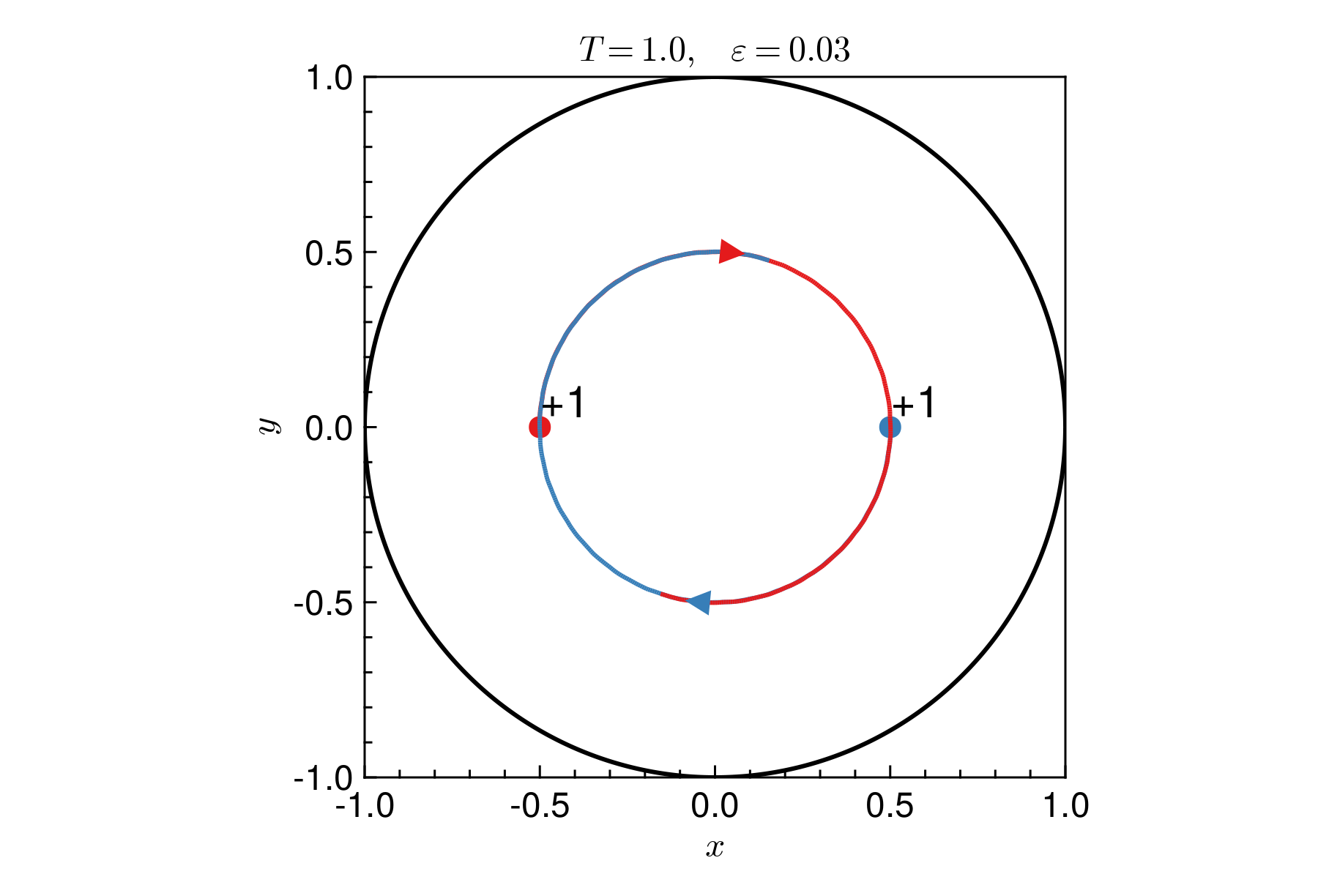}\hfill
  \includegraphics[width=0.45\linewidth]{./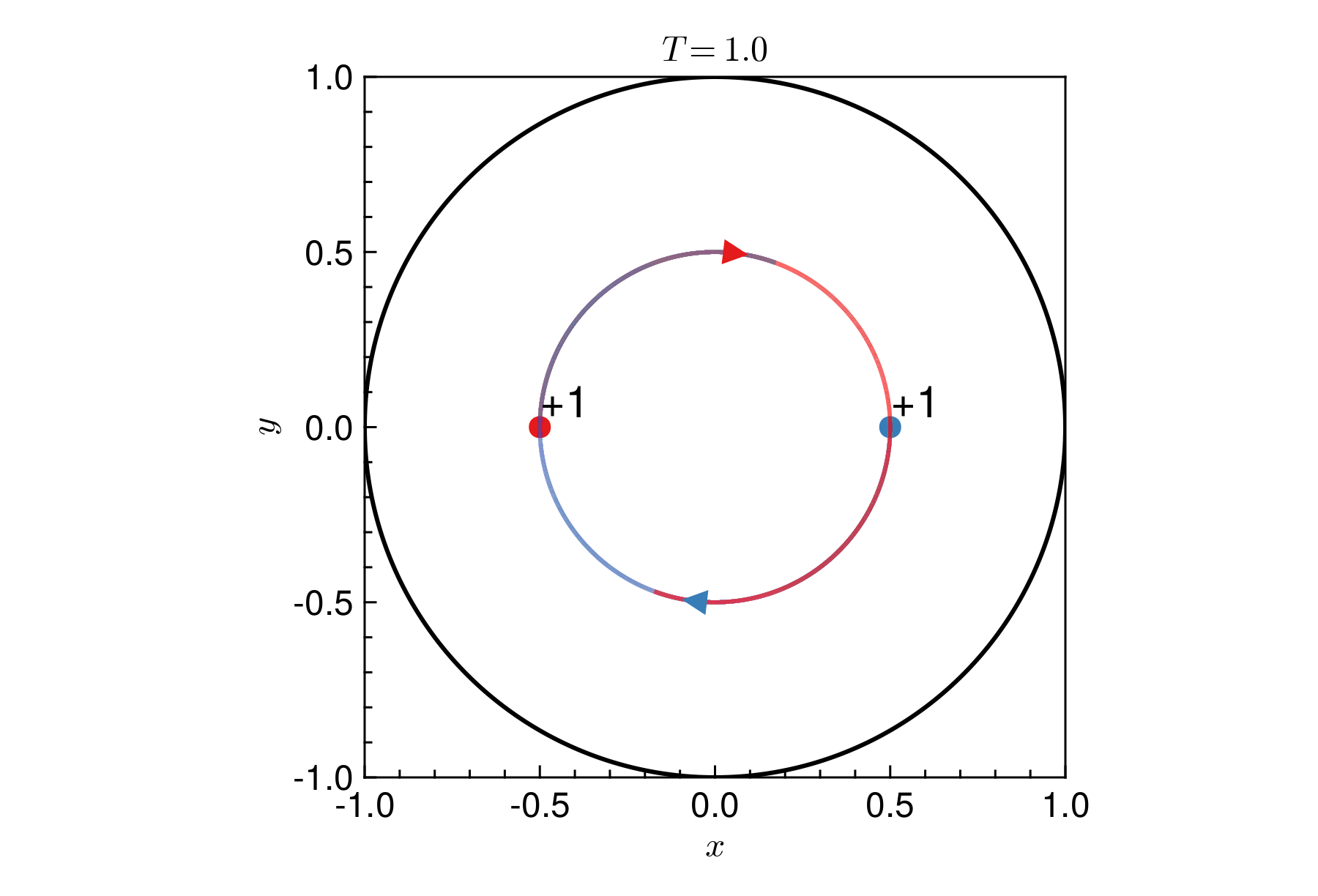}\\
  \caption{Vortex trajectories in the limit case for different values of
    $\varepsilon > 0$ and for $\varepsilon=0$ (bottom right). For finite $\varepsilon$, the
    trajectories oscillate around the limit trajectory.}
  \label{fig:traj_eps}
\end{figure}

\begin{figure}[p!]
  \centering
  \includegraphics[width=0.48\linewidth]{./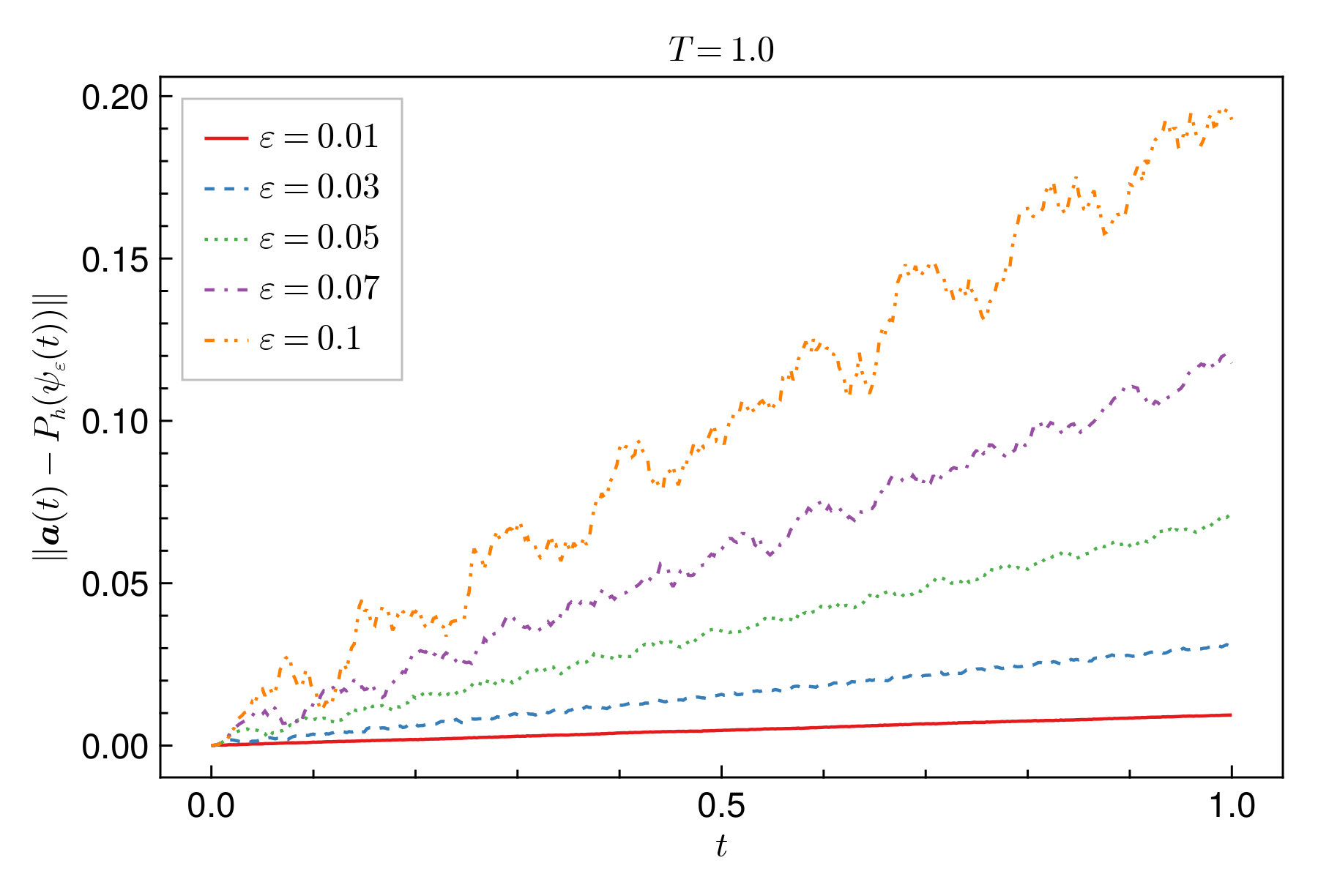}\hfill
  \includegraphics[width=0.48\linewidth]{./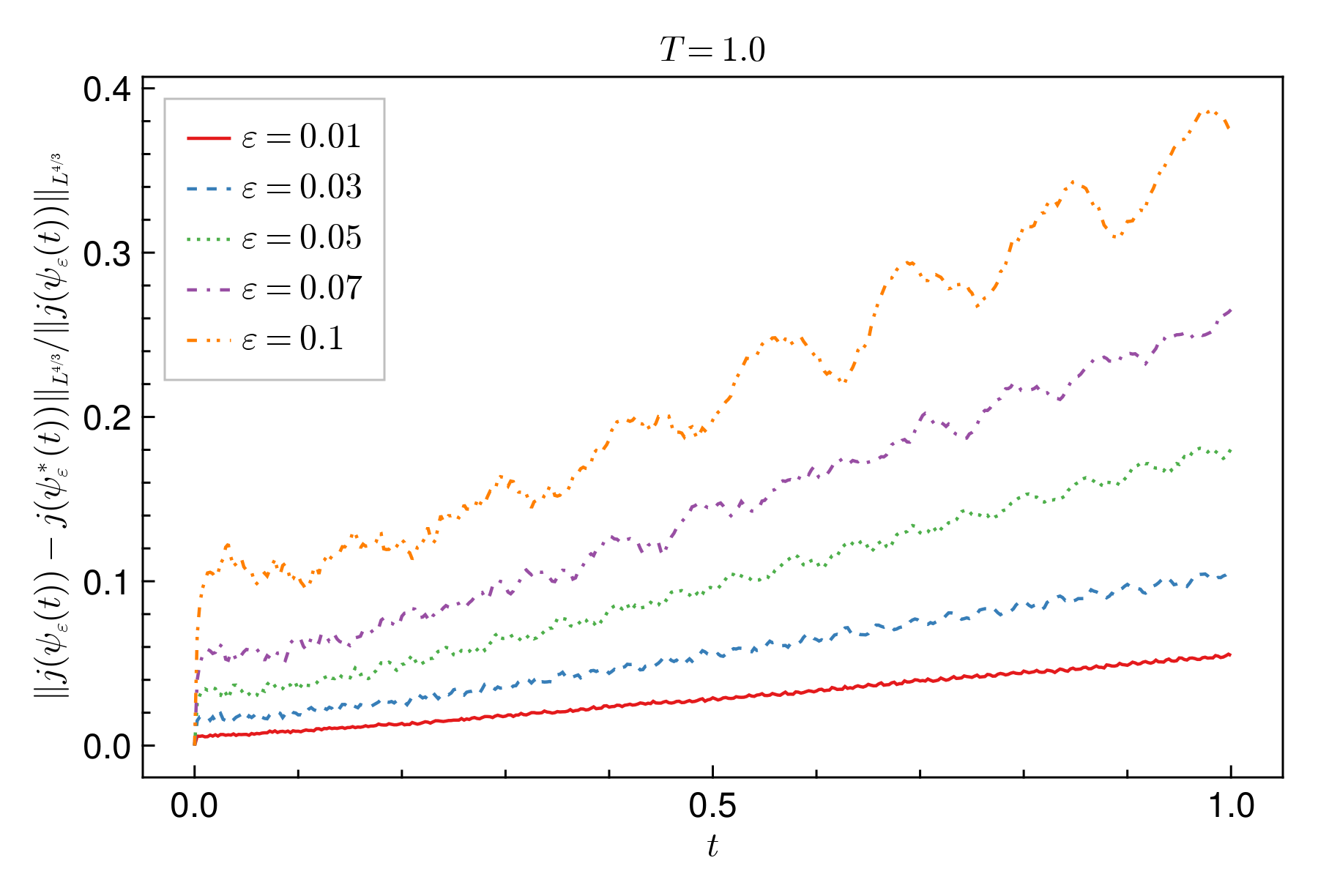}\\
  \includegraphics[width=0.48\linewidth]{./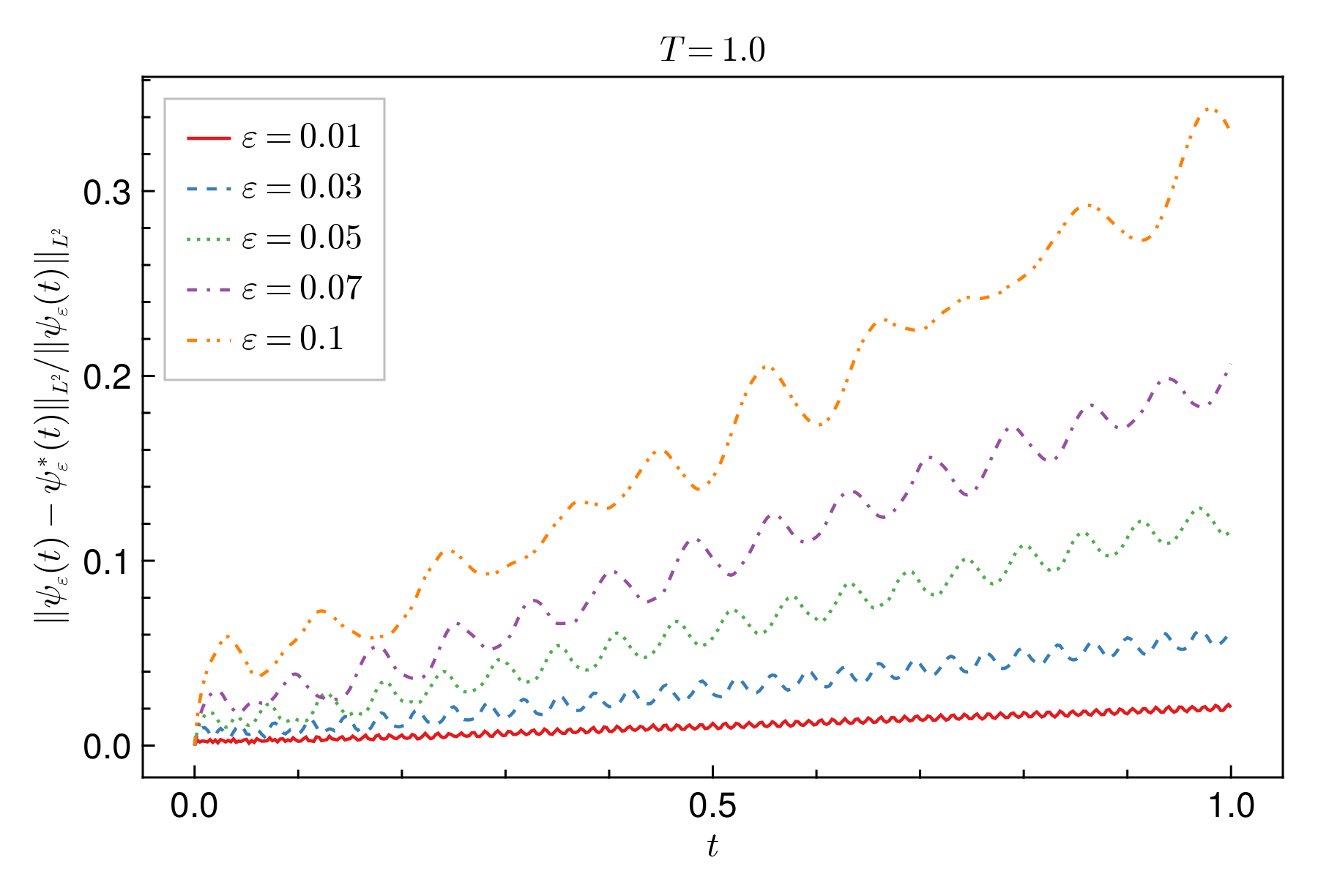}\hfill
  \includegraphics[width=0.48\linewidth]{./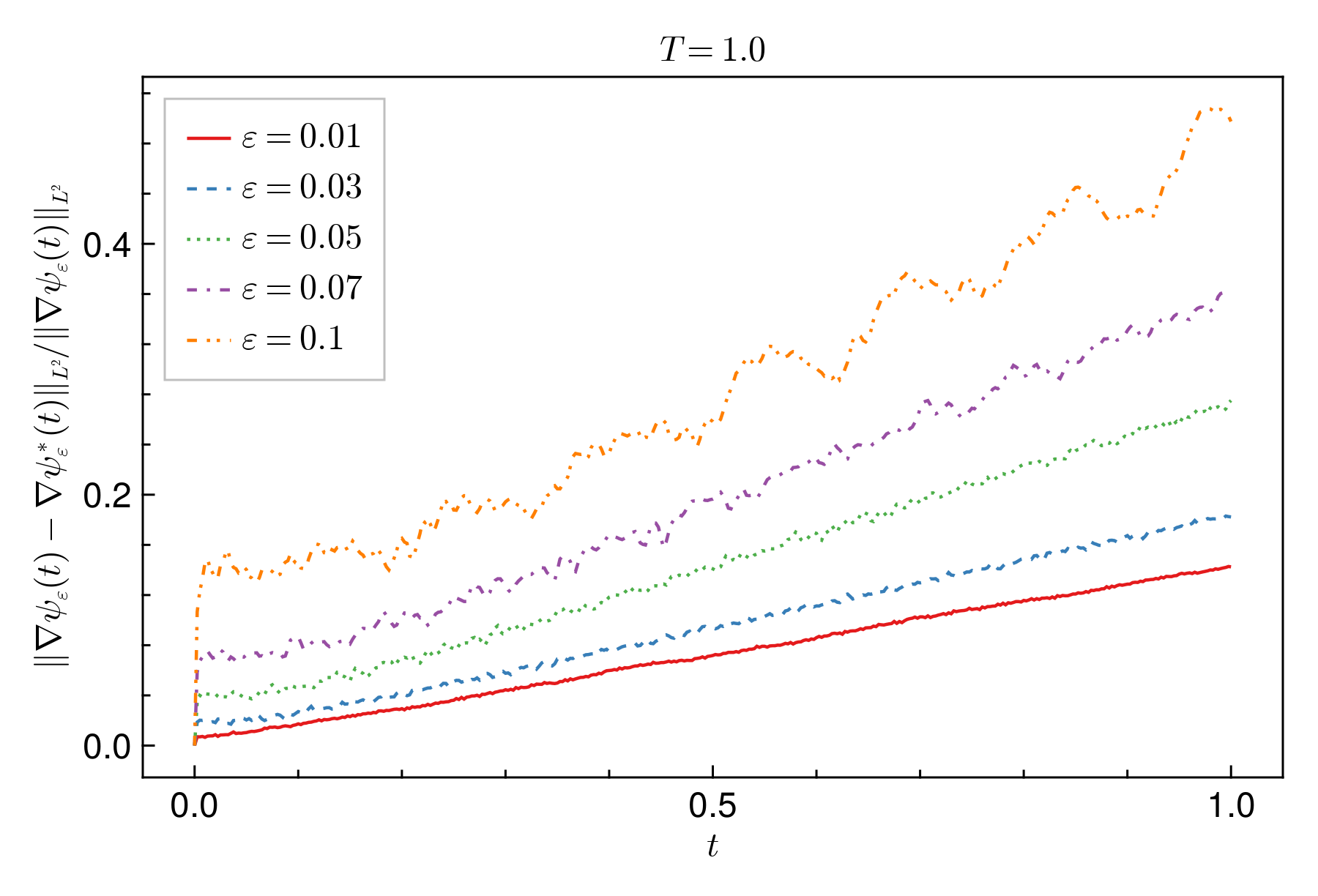}\\
  \caption{Error on different quantities up to time $T$ for case 1. (Top left)
    Error on vortices positions in $\Omega$. (Top right)
    $L^{\frac43}$ norm of the error on the supercurrents.
    (Bottom left) $L^2$ norm of the error between $\psi_\varepsilon(t)$
    and the reconstructed wave function $\psi_\varepsilon^*(t)$. (Bottom
    right) $L^2$ norm of the error between $\nabla \psi_\varepsilon(t)$
    and the reconstructed gradient $\nabla \psi_\varepsilon^*(t)$. .}
  \label{fig:error_eps}
\end{figure}

\begin{figure}[h!]
  \centering
  \includegraphics[width=0.48\linewidth]{./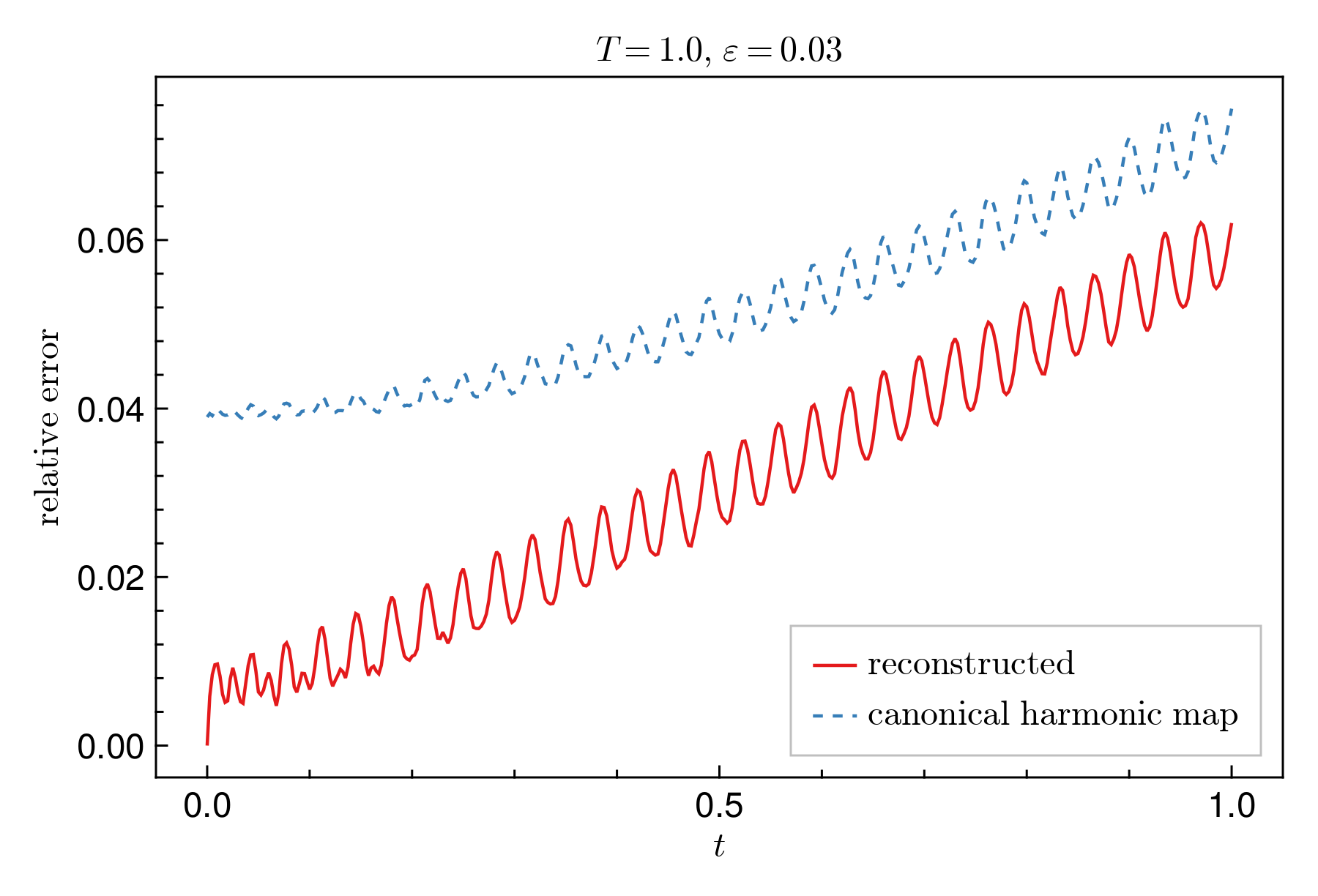}\hfill
  \includegraphics[width=0.48\linewidth]{./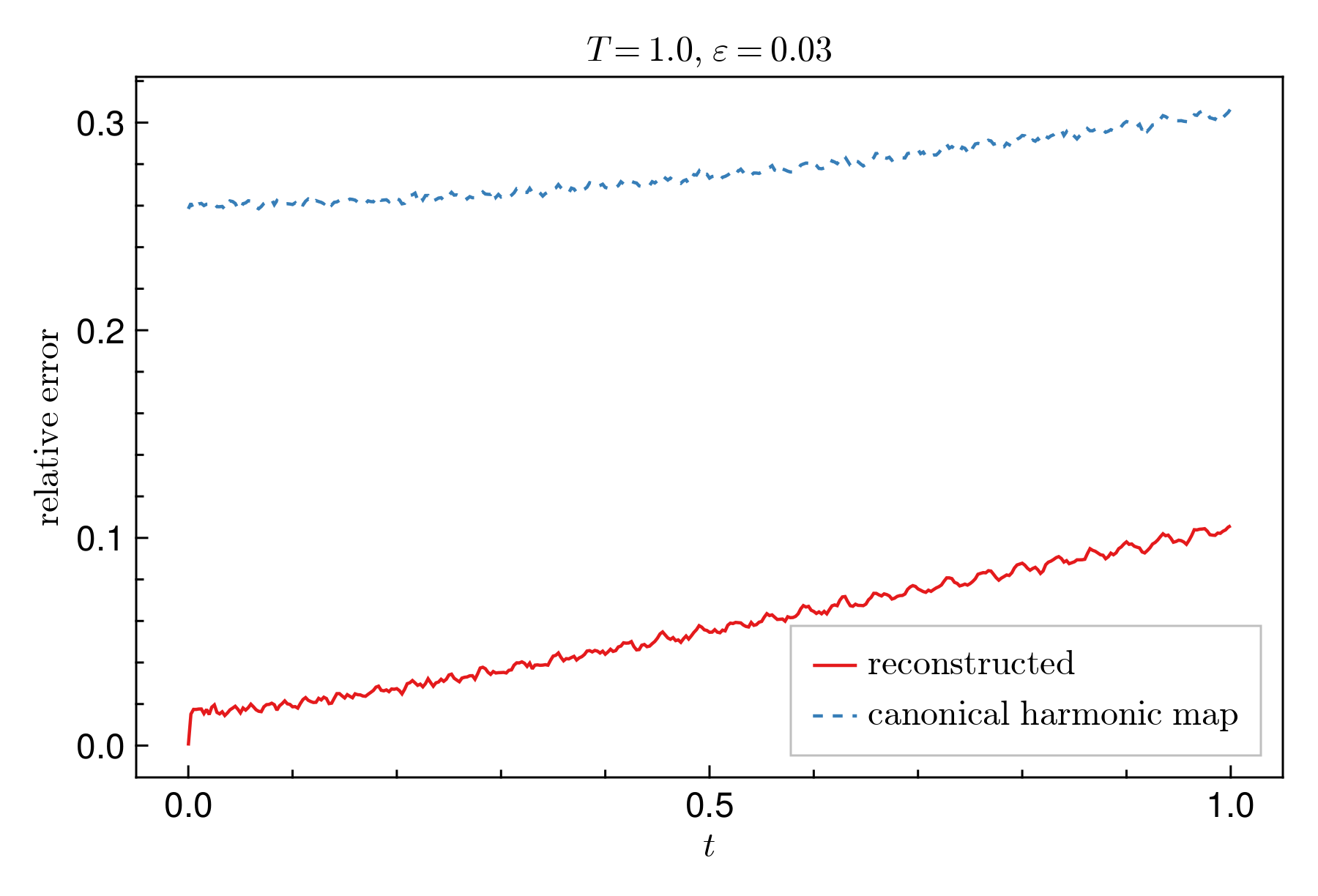}\\
  \caption{(Left) $L^2$ norm of the error between $\psi_\varepsilon(t)$
    and the canonical harmonic map $u^*({\bm a}(t), d)$
    \emph{vs} the error between $\psi_\varepsilon(t)$ and
    $\psi_\varepsilon^*(t)$, for $\varepsilon=0.03$. (Right)
    $L^{\frac43}$ norm of the error between $j(\psi_\varepsilon(t))$
    and $j(u^*({\bm a}(t), d))$
    \emph{vs} the error between $j(\psi_\varepsilon(t))$ and
    $j(\psi_\varepsilon^*(t))$, for $\varepsilon=0.03$. }
  \label{fig:error_vs_canon}
\end{figure}

\subsubsection{Numerical illustration of Theorem~\ref{thm:errorestimate}}

We end this section by a numerical illustration of
Theorem~\ref{thm:errorestimate} and the convergence with respect to
$\varepsilon$. Indeed,
with $\delta = 10^{-5}$ and $n=128$,
the previous numerical considerations lead to a dominating error contribution
coming from $\varepsilon >0$. From
Theorem~\ref{thm:errorestimate}, we expect it to be of order
$\varepsilon^\gamma$, with some exponent $\gamma$. In Figure~\ref{fig:linreg}, we plot
a linear regression in $\varepsilon$ of the error
\[
  \norm{j(\psi_\varepsilon(t)) - j(\psi_\varepsilon^*(t))}_{L^{\frac43}} \quad
  \text{and}\quad
  \norm{j(\psi_\varepsilon(t)) - j(u^*(\bm{a}(t),d)}_{L^{\frac43}}, \quad
\]
for different times $t$ and $\varepsilon \in \set{0.01, 0.03, 0.05, 0.07, 0.1}$.
The
linear regression suggests an exponent $\gamma > 1/2$ for the first error
and $\gamma$ closer to $\frac12$ for the second error. The latter is expected from
\eqref{eq:jest}, and the first plot therefore suggests that the smoothing procedure
reduces the error when approximating $j(\psi_\varepsilon(t))$ by
$j(\psi_\varepsilon^*(t))$.

\begin{figure}[h!]
  \centering
  \includegraphics[width=0.48\linewidth]{./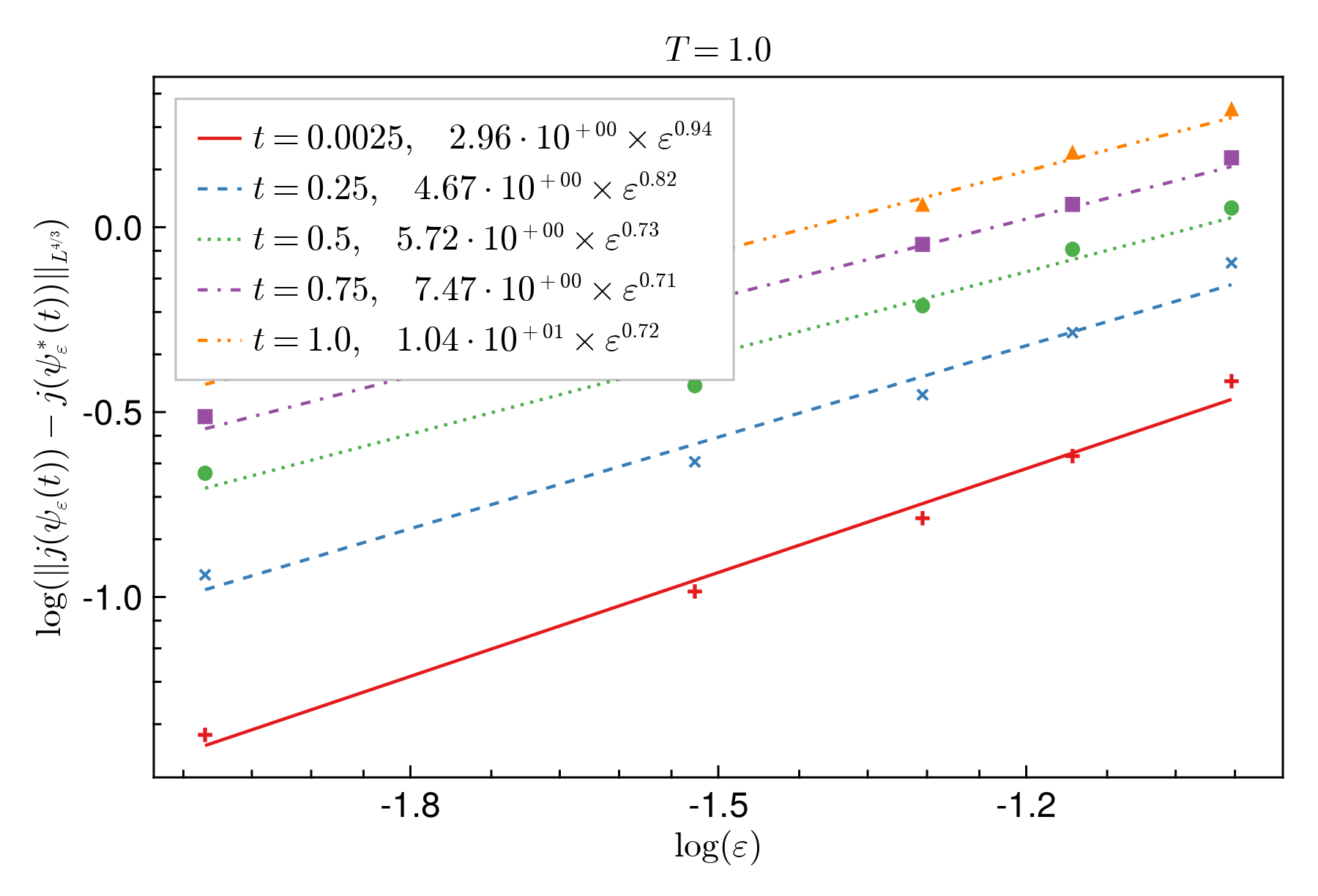}\hfill
  \includegraphics[width=0.48\linewidth]{./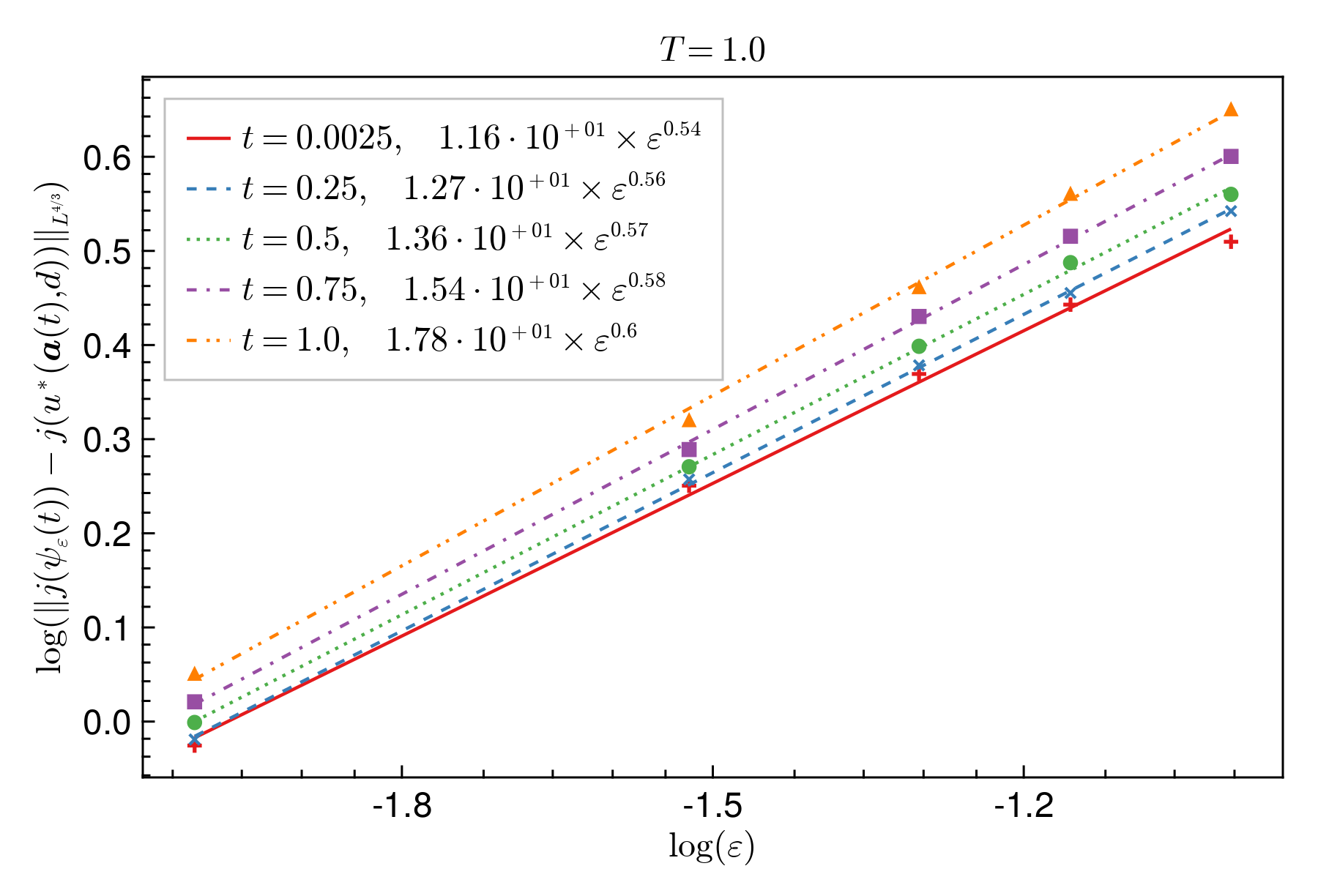}
  \caption{Linear regression in $\varepsilon$ on $\norm{j(\psi_\varepsilon(t)) -
      j(\psi_\varepsilon^*(t))}_{L^{\frac43}}$ (left) and $\norm{j(\psi_\varepsilon(t)) -
      j(u^*({\bm a}(t),d)}_{L^{\frac43}}$ (right), at different times $t$.}
  \label{fig:linreg}
\end{figure}

\section{Conclusion}

In this paper, we introduced a new method for the numerical simulation of the
Gross--Pitaevskii equation in the regime of small vortices of core size $\varepsilon
\ll 1$. From a computational point of view, this is a very demanding task as
small vortices require very fine spatial and time discretization to ensure
numerical stability. Based on the analytical theory of the singular limit
$\varepsilon\to0$, we introduced a cheap, efficient and
asymptotically valid numerical
method. We believe our method to have two main advantages: (i) the numerical
treatment of $\varepsilon$ is necessary in a pre-processing step only, where the
one-dimensional ODE \eqref{eq:ODE} can be solved accurately, and (ii) the
numerical solution is asymptotically valid, in the sense that the smaller
$\varepsilon$, the more accurate is the numerical approximation. We also
provided error estimates on the supercurrents in the case where $\Omega$ is the
unit disk, with numerical illustration of the convergence. While the limiting
dynamics are only valid up to the first vortex collision and are not able to
describe nonlinear phenomena induced when this happens (for instance, radiation
waves), one could imagine to simulate vortex interaction when one vortex and
anti-vortex become close to each other, see for instance Supplementary Materials
of \cite{simulaEmergenceOrderTurbulence2014}.
We expect this method to work in more general cases where the vortex motion can be
reduced to a system of ODE, for instance when using the gradient flow or mixed flow instead of
the Schrödinger flow, see \eg
\cite{baoNumericalStudyQuantized2013,kurzkeDynamicsGinzburgLandauVortices2009,sandier2004gamma}.
The case of non-vanishing vector potential, for instance by adding a magnetic field
\cite{gustafsonEffectiveDynamicsMagnetic2006a,sandier2004gamma,spirnVortexDynamicsFull2002,spirnVortexMotionLaw2003}
seems also applicable, even if further investigations are needed in this case as
there are now two unknowns: the order parameter $\psi_\varepsilon$ and the magnetic field potential $A_\varepsilon$ that scale together when~
$\varepsilon\to0$.

\section*{Data availability}

All the codes used to generate the plots from this paper are available at
\begin{center}\url{https://doi.org/10.18419/darus-4229}.\end{center}

\section*{Acknowledgments}

The authors T.C, G.K, C.M and B.S acknowledge funding by the Deutsche
Forschungsgemeinschaft (DFG, German Research Foundation) - Project number
442047500 through the Collaborative Research Center \enquote{Sparsity and
  Singular Structures} (SFB 1481). The authors are also grateful to Andreas A. Buchheit
and Israel M. Sigal for fruitful discussions and suggestions.

\bibliographystyle{abbrv}
\bibliography{biblio}

\appendix

\section{Error estimates}

In this section we derive some error estimates used in the proof of
Theorem~\ref{thm:errorestimate}.
There are two main sources of error for our numerical scheme: (i) the
evaluation of the forcing term of the ODE requires to solve a PDE at each
time step which introduces a first discretization error, and (ii) the numerical
resolution of the ODE itself introduces a time discretization error.

In all this appendix, we consider, for $s\in\R$, the fractional Sobolev spaces
$H^s(\partial\Omega)$ equipped with norm
\[
  \norm{g}_{H^s(\partial\Omega)}^2 = 2\pi \sum_{k\in\Z} (1+|k|^2)^s
  |\widehat{g}(k)|^2,
\]
where the Fourier coefficients are defined by
\[
  \widehat{g}(k) = \frac1{2\pi} \int_0^{2\pi} e^{-\i k \theta} g(e^{\i \theta})
  \mathrm{d}\theta.
\]

\subsection{Exact {\it vs} approximate forcing term}

According to Section~\ref{sec:HD_sim}, we are interested in the solution
$\bm{a}~:~\R_+~\rightarrow~\Omega^{* N}$ of the ODE
\begin{align}
  \begin{cases} \dot{\bm{a}}(t) = F\bigr(\bm{a}(t)\bigr),\quad t\in \R_+ \\
    \bm{a}(0) = \bm{a_0} \in \Omega^{* N}, \end{cases} \label{eq:exactdynamics}
\end{align}
where the components $F_j: \Omega^{* N} \rightarrow \R^{2}$ of the forcing term $F(\bm{a}) = (F_1(\bm{a}),...,F_N(\bm{a})) \in \R^{2N}$ are given by
\begin{align*}
  F_j(\bm{a}) = 2\J \biggr( \nabla_{x} R(x;\bm{a},d)\bigr\rvert_{x = a_j} +
  \sum_{k\neq j}^N d_k \frac{a_j-a_k}{|a_j-a_k|} \biggr)\quad \quad
  \mbox{for some } \{d_j\}_{j=1}^N \in \{\pm 1\}^N,
\end{align*}
and $R(\cdot;\bm{a},d): \Omega \rightarrow \R$ is the solution of
\begin{align*}
  \begin{cases}
    \Delta_x R(x;\bm{a},d) = 0,\quad\text{in }\Omega,\\
    \displaystyle R(x;\bm{a},d) =  - \sum_{j=1}^N d_j\log|x -
    a_j| \quad\text{on }\partial\Omega.
  \end{cases}
\end{align*}
After fixing the basis of harmonic polynomials up to order $n$, the approximate forcing term we evaluate is
\begin{align*}
  F_j^n(\bm{a}) = 2\J\biggr( \nabla_{x} R_n(x;\bm{a},d)\bigr\rvert_{x = a_j} +
  \sum_{k\neq j}^N d_k \frac{a_j-a_k}{|a_j-a_k|}\biggr) \quad \quad
  \mbox{for some } \{d_j\}_{j=1}^N \in \{\pm 1\}^N,
\end{align*}
with $R_n(\cdot;\bm{a},d): \Omega \rightarrow \R$ being the solution of
\begin{align*}
  \begin{cases}
    \Delta_x R_n(x;\bm{a},d) = 0,\quad\text{in }\Omega,\\
    \displaystyle R_n(x;\bm{a},d) =  - \P_n\biggr(\sum_{j=1}^N d_j\log|x -
    a_j|\biggr), \quad\text{on }\partial\Omega,
  \end{cases}
\end{align*}
where $\P_n$ is the $L^2(\partial \Omega)$-orthogonal projection on the bases of harmonic polynomials up to order $n$ on the boundary. Our task is to estimate the difference $R_n - R$ in suitable norms. To this end, we shall use the following well-known estimates.

\begin{lemma}[Classical estimates for Dirichlet problem
  \cite{gilbargEllipticPartialDifferential2001}]
  Let $\Omega \subset \R^2$ be a bounded smooth domain, $g~\in~C^\infty(\partial
  \Omega)$, and $u \in C^\infty(\overline{\Omega})$ be the unique solution of
  \begin{align}
    \begin{cases}
      \Delta u = 0, \quad \mbox{in $\Omega$,}\\
      u = g, \quad \mbox{in $\partial \Omega$.}
    \end{cases} \label{eq:Laplace}
  \end{align}
  Then there exists implicit constants depending only on $\Omega$ and $\ell \in \N\cup\{0\}$ such that
  \begin{align}
    &|\partial^\alpha u(x)| \lesssim_{\ell} \frac{1}{\dist(x,\partial \Omega)^{\frac{1}{2}+\ell}} \norm{g}_{L^2(\partial \Omega)} \quad\mbox{for any $\alpha \in (\N \cup \{0\})^2$ with $|\alpha| = \ell$,} \label{eq:classicalest1} \\
    &\norm{\nabla u }_{L^2(\Omega)} \lesssim \norm{g}_{H^{\frac12}(\partial \Omega)}. \label{eq:classicalest2}
  \end{align}
  Moreover, in the case where $\Omega$ is the unit disk and $\P_n g = 0$, there
  exists constants independent of $n$, $g$, and $x \in \Omega$ such that
  \begin{align}
    &|\partial^\alpha u(x)| \lesssim_{\ell} \frac{n^\ell |x|^{n+1-\ell}}{(1-|x|)^{\frac{1}{2}+\ell}} \norm{g}_{L^2(\partial \Omega)}, \quad \mbox{for any $\alpha \in (\N\cup \{0\})^2$ with $|\alpha| =\ell \geq 1$.} \label{eq:classicaldiskest} \\
    &\norm{\nabla u}_{L^2(\Omega)}^2  = \sum_{|k| >n} 2 \pi |k|
    |\widehat{g}(k)|^2
    \label{eq:classicaldiskest2}.
  \end{align}
\end{lemma}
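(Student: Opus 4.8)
The four estimates split naturally into two groups: the general-domain bounds \eqref{eq:classicalest1}--\eqref{eq:classicalest2}, which I would obtain from interior regularity of harmonic functions together with the mapping properties of the Poisson kernel, and the explicit disk bounds \eqref{eq:classicaldiskest}--\eqref{eq:classicaldiskest2}, which I would read off directly from the Fourier expansion of the harmonic extension on $B_1(0)$.

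For the pointwise bound \eqref{eq:classicalest1} the plan is to factor the estimate through the sup-norm of $u$ on an interior ball. Writing $\delta = \dist(x,\partial\Omega)$, I would first bound $\abs{u(x)}$ using the Poisson representation $u(x)=\int_{\partial\Omega}P(x,y)g(y)\,\d\sigma(y)$: by Cauchy--Schwarz, $\abs{u(x)}\leq\norm{P(x,\cdot)}_{L^2(\partial\Omega)}\norm{g}_{L^2(\partial\Omega)}$, and since the Poisson kernel of a smooth domain satisfies $P(x,y)\lesssim\delta/\abs{x-y}^2$ with $\abs{x-y}\gtrsim\delta$, a one-dimensional integration along $\partial\Omega$ gives $\norm{P(x,\cdot)}_{L^2(\partial\Omega)}\lesssim\delta^{-1/2}$. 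This is precisely where the half-power in the exponent originates. For $\abs{\alpha}=\ell\geq1$ I would then invoke the standard interior estimate $\abs{\partial^\alpha u(x)}\lesssim_\ell\delta^{-\ell}\sup_{B_{\delta/2}(x)}\abs{u}$ for harmonic functions and bound the supremum by applying the previous Poisson estimate at every point of $B_{\delta/2}(x)$ (each at distance $\gtrsim\delta$ from $\partial\Omega$), which yields the claimed $\delta^{-\ell-1/2}\norm{g}_{L^2(\partial\Omega)}$. The energy bound \eqref{eq:classicalest2} is the classical trace/minimization estimate: the harmonic extension minimizes the Dirichlet energy among all extensions of $g$, so testing against any lifting $G$ of $g$ with $\norm{\nabla G}_{L^2}\lesssim\norm{g}_{H^{1/2}(\partial\Omega)}$, provided by a bounded right inverse of the trace operator, gives the result.

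On the disk I would use the explicit expansion $u(re^{\i\theta})=\sum_{k\in\Z}\wh{g}(k)\,r^{\abs{k}}e^{\i k\theta}$. Computing $\norm{\nabla u}_{L^2(\Omega)}^2$ in polar coordinates, the radial and angular contributions each evaluate to $\pi\sum_k\abs{k}\abs{\wh{g}(k)}^2$ after using $\int_0^1 r^{2\abs{k}-1}\,\d r=1/(2\abs{k})$, giving the identity \eqref{eq:classicaldiskest2}; the hypothesis $\P_n g=0$ then simply removes the modes $\abs{k}\leq n$. For \eqref{eq:classicaldiskest} I would differentiate the series term by term (legitimate for $\abs{x}<1$), noting that the modulus of the $\ell$-th order derivative of the mode $\wh{g}(k)\,r^{\abs{k}}e^{\i k\theta}$ is bounded by $\abs{\wh{g}(k)}\,k^\ell r^{\abs{k}-\ell}$, and then apply Cauchy--Schwarz in $k$ to separate the factor $(\sum_{k>n}\abs{\wh{g}(k)}^2)^{1/2}\sim\norm{g}_{L^2(\partial\Omega)}$ from the purely geometric factor $\prt{\sum_{k>n}k^{2\ell}r^{2(k-\ell)}}^{1/2}$.

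The crux is the summation estimate $\sum_{k>n}k^{2\ell}r^{2k}\lesssim_\ell n^{2\ell}r^{2(n+1)}(1-r)^{-(2\ell+1)}$, which I expect to be the only delicate point. I would obtain it by shifting the index $k=n+1+m$, expanding $(n+1+m)^{2\ell}$ binomially, and using $\sum_{m\geq0}m^j r^{2m}\lesssim_j(1-r)^{-(j+1)}$ together with $1-r<1\leq n$ to show that every resulting term is dominated by $n^{2\ell}(1-r)^{-(2\ell+1)}$. Combining this with the factor $r^{-2\ell}$ left over from Cauchy--Schwarz and taking square roots produces exactly $n^\ell r^{n+1-\ell}(1-r)^{-(1/2+\ell)}$. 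The main obstacle is therefore purely the bookkeeping needed to track the exponents of $n$ and of $(1-r)$ uniformly in $n$; everything else reduces to a routine application of interior harmonic estimates and Parseval's identity.
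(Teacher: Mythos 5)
Your treatment of the disk estimates \eqref{eq:classicaldiskest}--\eqref{eq:classicaldiskest2} is essentially identical to the paper's: Fourier expansion of the harmonic extension, Parseval/orthogonality in polar coordinates for the gradient identity, Cauchy--Schwarz in $k$, and the summation bound $\sum_{k>n}k^{2\ell}r^{2k}\lesssim_\ell n^{2\ell}r^{2n+2}(1-r)^{-(2\ell+1)}$ (the paper obtains it by differentiating/inducting on the geometric series, you by the index shift $k=n+1+m$ and binomial expansion --- both work, and your bookkeeping with $n^{2\ell-j}\leq n^{2\ell}$ and $(1-r)^{-(j+1)}\leq(1-r)^{-(2\ell+1)}$ is sound). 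Where you genuinely diverge is in the general-domain estimates \eqref{eq:classicalest1}--\eqref{eq:classicalest2}: the paper simply cites \cite[Section 2.7]{gilbargEllipticPartialDifferential2001} and sketches a transfer from the disk via a biholomorphic map that is smooth up to the boundary (Kellogg's theorem), so that the boundary $L^2$ and $H^{1/2}$ norms are equivalent to those on the circle; you instead give a direct potential-theoretic proof, combining the kernel bound $P(x,y)\lesssim\dist(x,\partial\Omega)/|x-y|^2$ with Cauchy--Schwarz to get $|u(x)|\lesssim\dist(x,\partial\Omega)^{-1/2}\norm{g}_{L^2(\partial\Omega)}$, then the interior derivative estimate $|\partial^\alpha u(x)|\lesssim_\ell\dist(x,\partial\Omega)^{-\ell}\sup_{B_{\delta/2}(x)}|u|$ for \eqref{eq:classicalest1}, and the Dirichlet principle plus a bounded right inverse of the trace operator for \eqref{eq:classicalest2}. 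Your route is self-contained and, notably, avoids the Fa\`a di Bruno bookkeeping that the conformal-transfer argument would need to push \eqref{eq:classicalest1} to higher derivatives $\ell\geq1$; its only nonelementary input is the Poisson-kernel upper bound for smooth domains, which is itself a standard but nontrivial fact of roughly the same depth as what the paper cites. The paper's route keeps every constant explicit on the disk and delegates all regularity of the domain to one classical theorem about the Riemann map; yours trades that for a pointwise kernel estimate. Both are correct.
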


\begin{proof} Estimate~\eqref{eq:classicalest1} and \eqref{eq:classicalest2} are
  rather classical and detailed proofs can be found in \cite[Section
  2.7]{gilbargEllipticPartialDifferential2001}. Let us thus focus on
  the specific estimate for the disk and briefly sketch the argument in the
  general case. First, note that for the disk, we have the explicit formula for
  the solution via the Poisson kernel $P_r(\theta)$ or, equivalently, via the
  Fourier coefficients of $g$,
  \begin{align*}
    u(re^{\i \theta}) = -\frac{1}{2\pi} \int_0^{2\pi} P_r(\theta-\phi) g(e^{\i
      \phi}) \mathrm{d}\phi = \sum_{k \in \Z} r^{|k|} \widehat{g}(k) e^{\i
      k\theta}.
  \end{align*}
  Now, note that the harmonic polynomials of order up to $n$ span the same space
  as $\{z^k\}_{k=0}^n \cup \{\overline{z}^k\}_{k=1}^n$, which are just the
  Fourier basis if restricted to the boundary. In particular, if $\P_n g = 0$, then the above
  sum runs over the integers $|k| > n$. From the orthogonality of $r^{|k|} e^{\i
    k\theta}$ on $L^2(\Omega)$, we then have
  \begin{align*}
    \norm{u}_{L^2(\Omega)}^2 &= \sum_{|k| >n} |\widehat{g}(k)|^2 \norm{|r|^k
      e^{\i k\theta}}_{L^2(\Omega)}^2 = \sum_{|k|>n} \frac{\pi}{|k|+1}
    |\widehat{g}(k)|^2.
  \end{align*}
  Moreover, we can express the radial and angular component of the (a priori weak) gradient of $u$ as
  \begin{align*}
    \partial_r u(re^{\i \theta}) = \sum_{|k|>n} |k| \widehat{g}(k) r^{|k|-1}
    e^{\i k\theta}\quad\mbox{and}\quad \frac{1}{r} \partial_\theta u(re^{\i
      \theta}) = \sum_{|k|>n} \i k \widehat{g}(k) r^{|k|-1} e^{\i k \theta}.
  \end{align*}
  Consequently, from Cauchy-Schwarz we find
  \begin{align}
    |\nabla u(r e^{\i \theta})|^2 &\leq 2\biggr(\sum_{|k|>n}
    |\widehat{g}(k)|^2\biggr)\biggr(2 \sum_{k>n} k^2 r^{2(k-1)}\biggr)  \nonumber \\
    &= \frac{1}{\pi} \norm{g}_{L^2(\partial \Omega)}^2 \biggr(
    \frac{2r^{2n}}{(1-r^2)^3}
    \underbrace{\bigr(n^2 r^4 - (2n^2+2n-1) r^2 + (n+1)^2\bigr)}_{\leq (n+1)^2
      \text{ for } r \in [0,1]}
    \biggr)\nonumber\\
    &\leq \frac{2}{\pi} \frac{(n+1)^2 r^{2n}}{(1-r)^3} \norm{g}_{L^2(\partial \Omega)}^2, \label{eq:pointwiseest} \\
    \norm{\nabla u}_{L^2(\Omega)}^2 &= \int_0^1 \int_0^{2\pi} \biggr(|\partial_r
    u|^2 + \frac{1}{r^2}|\partial_\theta u|^2 \biggr) r\mathrm{d}r \mathrm{d} \theta \nonumber \\
    &= \sum_{|j|,|k|>n} \overline{\widehat{g}(k)} \widehat{g}(j)
    \biggr(|k||j|\inner{r^{|k|-1} e^{\i k\theta}, r^{|j|-1} e^{\i
        j\theta}}_{L^2(\Omega)} + k j \inner{ r^{|k|-1} e^{\i k\theta},
      r^{|j|-1} e^{\i j\theta}}_{L^2(\Omega)}\biggr) \nonumber \\
    &= \sum_{|k| >n} 2 \pi |k| |\widehat{g}(k)|^2 \leq \norm{g}_{H^{\frac12}(\partial \Omega)}^2, \label{eq:gradest}
  \end{align}
  which completes the proof of the first part for $\ell=1$. The general $\ell$ case follows from the same steps by noticing that each radial and angular derivative gives a factor of $k/r$ in the series, and using the bound
  \begin{align*}
    \sum_{k >n} k^{2\ell} r^{2k} \lesssim_\ell \frac{n^{2\ell} r^{2n+2}}{(1-r)^{2\ell+1}}, \quad \ell \in \N,
  \end{align*}
  which follows by induction on (or differentiating) the well-known formula for the partial sums of the geometric series.

  For the general smooth domain case, one can apply a biholomorphic conformal
  transformation of $\Omega$ into the disk. As the transformation and its
  inverse are smooth up to the boundary (see, \eg \cite[Chapter
  5]{krantzGeometricFunctionTheory2007}) the $L^2$ and $H^{\frac12}$ norm on the
  boundary of $\Omega$ are, respectively, equivalent to the $L^2$ and
  $H^{\frac12}$ norms on the circle. Therefore, eqs.~\eqref{eq:classicalest1}
  and \eqref{eq:classicalest2} follows from \eqref{eq:pointwiseest} and
  \eqref{eq:gradest}.
\end{proof}

\begin{lemma}[Log function estimates] Let $(a,b) \in \Omega^{*2}$ and $g_a(x) = \log|x-a|$, then we have
  \begin{align}
    &\norm{g_a - g_b}_{H^s(\partial \Omega)} \lesssim_s \frac{|a-b|}{\min\{\dist(a,\partial \Omega) \dist(b,\partial \Omega)\}^{s+1}} \quad \mbox{for any $0 \leq s\leq 1$,} \label{eq:logboundaryest} \\
    &\norm{\nabla g_a- \nabla g_b}_{L^p(\Omega)} \lesssim |a-b|^{\frac{2}{p}-1} , \quad \mbox{for any $1\leq p <2$} \label{eq:loginteriorest} \\
    &\norm{g_a}_{H^{\ell}(\partial \Omega)} \lesssim \frac{1}{\dist(a,\partial \Omega)^{\max\{\ell-\frac{1}{2},0\}}}, \quad \mbox{for any $\ell \in \N\cup\{0\}.$} \label{eq:lognorms}
  \end{align}
  Moreover, if $\Omega$ is the unit disk we have
  \begin{align}
    &\norm{ \P_n^\perp g_a}_{H^s(\partial \Omega)} \lesssim_{\ell}
    \frac{n^{-\ell+s}}{\dist(a,\partial \Omega)^{\ell-\frac12}} \quad \mbox{for
      any $0\leq s\leq \ell$ and $\ell \geq 1$,}  \label{eq:logspectralconv}
  \end{align}
  where $\P_n^\perp = 1- \P_n$ and $\P_n$ is the orthogonal projection defined previously and the implicit constants are independent of $a, b$ and $n$.
\end{lemma}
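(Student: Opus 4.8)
The plan is to treat the four bounds separately: \eqref{eq:logboundaryest}--\eqref{eq:lognorms} hold on any smooth $\Omega$ and rest on elementary real-variable estimates, while \eqref{eq:logspectralconv} is specific to the disk and exploits the explicit Fourier series of $\log|e^{\i\theta}-a|$. Throughout I write $\delta = \min\{\dist(a,\partial\Omega),\dist(b,\partial\Omega)\}$. For \eqref{eq:logboundaryest} I would parametrize the segment $c(t) = (1-t)a + tb$ and use the fundamental theorem of calculus to write
\begin{equation*}
  g_b(x) - g_a(x) = \int_0^1 \frac{(c(t)-x)\cdot(b-a)}{|x-c(t)|^2}\,\d t,
\end{equation*}
so that on $\partial\Omega$, where $|x-c(t)| \gtrsim \delta$, one gets the pointwise bound $|g_a(x)-g_b(x)| \lesssim |a-b|\,\delta^{-1}$ and, after one tangential differentiation (which produces an extra factor $|x-c(t)|^{-1}$), $|\partial_\tau(g_a-g_b)(x)| \lesssim |a-b|\,\delta^{-2}$. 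This yields the $s=0$ and $s=1$ endpoints, and since the $H^s(\partial\Omega)$ scale is the complex interpolation scale $[L^2,H^1]_s$, the intermediate exponents follow by interpolation, giving exactly $|a-b|\,\delta^{-(s+1)}$.

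For \eqref{eq:loginteriorest} the key is the exact identity (identifying $\R^2$ with $\C$)
\begin{equation*}
  |\nabla g_a(x) - \nabla g_b(x)| = \frac{|a-b|}{|x-a|\,|x-b|},
\end{equation*}
which reduces the claim to the standard Riesz-type integral $\int_\Omega |x-a|^{-p}|x-b|^{-p}\,\d x$. Splitting $\Omega$ into the two balls $B_{|a-b|}(a)$, $B_{|a-b|}(b)$ and the far region where $|x-a|\sim|x-b|$ gives $\lesssim |a-b|^{2-2p}$ for $1<p<2$, hence $\|\nabla g_a-\nabla g_b\|_{L^p}\lesssim |a-b|^{2/p-1}$ after taking $p$-th roots (the endpoint $p=1$ is borderline, producing a harmless logarithm, but only $p=\tfrac43$ enters the application).

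For \eqref{eq:lognorms} I would note that on $\partial\Omega$ the map $x\mapsto\log|x-a|$ is smooth and its $\ell$-th tangential derivative is $O(|x-a|^{-\ell})$; its $L^2$ norm is controlled by integrating over the boundary and rescaling by $\delta$ near the point of $\partial\Omega$ closest to $a$, where $|x-a|^2 \approx \delta^2 + t^2$ in arclength $t$. For $\ell\geq1$ the substitution $t=\delta u$ produces the convergent integral $\int (1+u^2)^{-\ell}\,\d u$ times $\delta^{1-2\ell}$, giving $\|g_a\|_{H^\ell}\lesssim\delta^{-(\ell-1/2)}$, while for $\ell=0$ the logarithm is bounded in $L^2(\partial\Omega)$ uniformly in $\delta$, matching $\max\{\ell-\tfrac12,0\}$.

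The main work is \eqref{eq:logspectralconv}, and here I would use that for the unit disk $|x-a| = |1-\bar a\, e^{\i\theta}|$ on the boundary, so the Taylor expansion of $\log(1-\bar a\,e^{\i\theta})$ gives the explicit coefficients $\widehat{g_a}(k) = -\bar a^{|k|}/(2|k|)$ for $k\neq0$ and $\widehat{g_a}(0)=0$, whence $|\widehat{g_a}(k)| = |a|^{|k|}/(2|k|)$. Since $\dist(a,\partial\Omega) = 1-|a| = \delta$, the definition of the $H^s$ norm gives
\begin{equation*}
  \norm{\P_n^\perp g_a}_{H^s(\partial\Omega)}^2 \lesssim \sum_{k>n} k^{2s-2}(1-\delta)^{2k}.
\end{equation*}
The crux is to separate the truncation gain from the distance-to-boundary blow-up: for $k>n$ and $s\leq\ell$ one has $k^{2s-2\ell}\leq n^{2s-2\ell}$, and the remaining full sum $\sum_{k\geq1}k^{2\ell-2}(1-\delta)^{2k}$ is a polylogarithm bounded by $\lesssim_\ell (1-(1-\delta)^2)^{-(2\ell-1)}\lesssim\delta^{-(2\ell-1)}$ for $\ell\geq1$. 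Combining and taking square roots yields $\norm{\P_n^\perp g_a}_{H^s}\lesssim_\ell n^{s-\ell}\delta^{-(\ell-1/2)}$. I expect the delicate point to be precisely this balancing of the algebraic truncation factor $n^{s-\ell}$ against the competing exponential decay $(1-\delta)^{2k}$ and polynomial growth $k^{2\ell}$, so that the final bound captures the correct \emph{joint} dependence on $n$ and on the distance to the boundary; the polylogarithm asymptotics $\sum_k k^m q^k \sim m!\,(1-q)^{-(m+1)}$ as $q\to1^-$ is what makes the $\delta$-power come out sharp.
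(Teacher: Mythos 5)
Your proposal gets three of the four estimates right, but there is one genuine gap, in \eqref{eq:logboundaryest}. Your argument moves the singularity along the physical segment $c(t)=(1-t)a+tb$ and requires $|x-c(t)|\gtrsim\delta$ for all $x\in\partial\Omega$ and $t\in[0,1]$. That lower bound holds when $\Omega$ is convex (the distance to the boundary is concave along segments), so it is fine for the unit disk; but the lemma is stated---and used, through the general-domain Gronwall estimate \eqref{eq:gronwalest} of Lemma~\ref{lem:dyn_err}---for an arbitrary bounded smooth simply connected $\Omega$. For a non-convex domain (a horseshoe, say) the segment joining two interior points can cross $\partial\Omega$; then $\inf_{t,x}|x-c(t)|=0$ and your pointwise bound gives nothing. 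The paper avoids paths entirely: it proves the endpoint-only estimates \eqref{eq:est1}, namely $\bigl|\log|x|-\log|y|\bigr|\lesssim |x-y|/\min\{|x|,|y|\}$ (triangle inequality plus $\log(1+t)\le t$) and $\bigl|x/|x|^2-y/|y|^2\bigr|=|x-y|/(|x||y|)$, which involve only $x-a$ and $x-b$ and hence only $\dist(a,\partial\Omega)$ and $\dist(b,\partial\Omega)$; integrating these over $\partial\Omega$ and interpolating gives \eqref{eq:logboundaryest} on any smooth domain. Your proofs of the other general-domain bounds do not suffer from this: for \eqref{eq:loginteriorest} you use the exact identity $|\nabla g_a-\nabla g_b|=|a-b|/(|x-a|\,|x-b|)$ rather than a path, and the ball/far-region splitting is essentially the paper's own computation (your caveat about a logarithmic loss at $p=1$ applies equally to the paper's argument); for \eqref{eq:lognorms} your arclength computation is the same as the paper's.

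For the spectral estimate \eqref{eq:logspectralconv} you take a genuinely different route, and it is correct. You compute the exact Fourier coefficients $|\widehat{g_a}(k)|=|a|^{|k|}/(2|k|)$ of $\log|e^{\i\theta}-a|$ and bound the tail $\sum_{k>n}k^{2s-2}(1-\delta)^{2k}$ by extracting the factor $n^{2s-2\ell}$ and estimating the remaining polylogarithm by $\delta^{-(2\ell-1)}$ (using $1-(1-\delta)^2\ge\delta$). The paper instead needs no explicit coefficients: it combines the generic truncation inequality $\norm{\P_n^\perp g}_{H^s}\lesssim n^{s-\ell}\norm{g}_{H^\ell}$ with the already-proved bound \eqref{eq:lognorms}, a two-line argument. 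Your computation is self-contained and in fact retains more information---the geometric decay $(1-\delta)^{2k}$ would even yield an exponentially small factor $(1-\delta)^{2n}$ of the kind appearing in \eqref{eq:classicaldiskest}, which you then discard---but it works only because $g_a$ has an explicit expansion on the circle, whereas the paper's argument applies verbatim to any boundary datum with controlled $H^\ell$ norms.
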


\begin{proof} Since $\nabla \log|x| = x/|x|^2$ is positively homogeneous of
  degree $-1$, we find $\big|\nabla^k \log |x|\big| \lesssim_k |x|^{-k}$ for any $k \in
  \N$. In fact, by induction one can show that for any $\alpha \in \N^2$ with $|\alpha| \geq 1$ we have
  \begin{align}
    \partial^\alpha \log|x| = \frac{p_\alpha(x)}{|x|^{2|\alpha|}}, \quad \mbox{for some polynomial $p_\alpha$ homogeneous of degree $|\alpha|$,} \label{eq:polynomialrep}
  \end{align}
  which implies
  \begin{align}
    \big|\log|x| - \log|y|\big| \lesssim \frac{|x-y|}{\min\{|x|,|y|\}} \quad\mbox{and}\quad \big|\nabla \log|x| - \nabla \log|y|\big| \lesssim \frac{|x-y|}{\min\{|x|,|y|\}^2}. \label{eq:est1}
  \end{align}
  Integrating these estimate along the boundary, we obtain
  \eqref{eq:logboundaryest} for $s =0$ and $s=1$. The estimates for $0<s<1$ then follow by interpolation.

  For the $L^p$ estimate in the interior, we split the integration over the
  balls $B_\rho(b)$, $B_\rho(a)$, and the rest of the domain $\Omega_{a,b} =
  \Omega \setminus \bigr(B_\rho(b)\cup B_\rho(a)\bigr)$ for some $\rho<|a-b|/2$.
  Then since the terms $\nabla g_a \sim |x-a|^{-1}$ gives the dominant
  contribution inside the ball $B_\rho(a)$, we can use estimate~\eqref{eq:est1}
  on $\Omega_{a,b}$ to obtain
  \begin{align*}
    \norm{\nabla g_a - \nabla g_b}_{L^p(\Omega)}^p &\lesssim \int_{B_\rho(b)} \frac{1}{|x-b|^p} \mathrm{d} x + \int_{B_\rho(a)} \frac{1}{|x-a|^p} \mathrm{d}x + \int_{\Omega_{a,b}} \frac{|a-b|^p}{\min\{|x-b|,|x-a|\}^{2p}} \mathrm{d} x\\
    &\lesssim \rho^{2-p} + \frac{|a-b|^{p}}{\rho^{2p-2}}.
  \end{align*}
  Minimizing the right hand-side, we find $\rho \sim |a-b|$, which yields \eqref{eq:loginteriorest}.

  Lastly, to prove the spectral convergence for the $L^2(\partial \Omega)$ norm,
  we can use \eqref{eq:polynomialrep} again. Indeed, by integrating the function
  $|x-a|^{-2\ell}$ along the boundary we have
  \begin{align*}
    \norm{g_a}_{H^\ell(\partial \Omega)}^2 \cong \norm{\nabla^\ell
      \log|x-a|}_{L^2(\partial\Omega)}^2 \lesssim  \begin{dcases} \dist(a,\partial \Omega)^{-2\ell+1},\quad \mbox{for $\ell \in \N$,}\\
      1, \quad \mbox{for $\ell = 0$,} \end{dcases}
  \end{align*}
  which proves~\eqref{eq:lognorms}. Since we have the equivalence of norms
  $\norm{g}_{H^s(\partial \Omega)}^2 \cong \sum_{k \in \Z} |\widehat{g}(k)|^2
  (1+|k|)^{2s}$ on the disk, we find
  \begin{align*}
    \norm{\P_n^\perp g_a}_{H^s(\partial \Omega)}^2 \cong \sum_{|k| > n}
    |\widehat{g_a}(k)|^2(1+|k|)^{2s} \lesssim  n^{-2\ell+2s} \norm{g_a}_{H^{\ell}(\partial
      \Omega)}^2 =  \frac{n^{-2\ell+2s}}{\dist(a,\partial \Omega)^{2\ell-1}}, \quad
    \mbox{for $\ell \in \N$.}
  \end{align*}
  The case $\ell \geq 1$ for real $\ell$ follows by interpolation.
\end{proof}

Let us now denote by $\bm{b}: \R_+ \to \Omega^{* N}$ the solution of the
approximated ODE
\begin{align}
  \begin{cases} \dot{\bm{b}}(t) = F^n(\bm{b}(t)), \\
    \bm{b}(0) = \bm{b}^0,
  \end{cases} \label{eq:approximateddynamics}
\end{align}
where $F^n$ is the approximated forcing term. Then an application of Gronwal's inequality with the previous estimates yields
\begin{lemma}[Continuous dynamical error]\label{lem:dyn_err} Let $\bm{b}^0,
  \bm{a}^0 \in \Omega^{* N}$ and $\bm{a}(t)$ and $\bm{b}(t)$ be respectively
  the solutions of \eqref{eq:exactdynamics} and \eqref{eq:approximateddynamics},
  for a fixed $d\in\set{\pm1}^N$ and let
  \begin{align*}
    \rho(\bm{a}) \coloneqq \min_{j\neq \ell} \{\dist(a_j,\partial\Omega), |a_j-a_\ell|\}.
  \end{align*}
  Suppose that $\rho_T \coloneqq \inf_{t\in [0,T]} \rho\bigr(\bm{a}(t)\bigr) >0$. Then, there exists some $C = C(\Omega,N)>0$ such that
  \begin{align}
    |\bm{a}(t)-\bm{b}(t)| \lesssim \biggr(|\bm{a}^0-\bm{b}^0| + \frac{\sup_{s
        \in [0, T]} \norm{\P_n^\perp g_{\bm{a}(s)}}_{L^2(\partial
        \Omega)}}{\rho_T^{3/2}} T\biggr) e^{Ct/\rho_T^{5/2}}, \quad \mbox{for
      any $t\in [0,T]$,} \label{eq:gronwalest}
  \end{align}
  where $g_{\bm{a}}(x) = - \sum_{j=1}^N d_j \log|x-a_j|$. Moreover, if $\Omega$ is the unit disk we have
  \begin{align}
    |\bm{a}(t)-\bm{b}(t)| \lesssim_{\ell} \biggr(|\bm{a}^0-\bm{b}^0| + \frac{(1-\rho_T)^{n} n^{1 - \ell}}{\rho_T^{1+\ell}} T\biggr) e^{Ct/\rho_T^{5/2}}, \quad \mbox{for any $t\in [0,T]$,} \label{eq:gronwalestdisk}
  \end{align}
  and any $\ell \in \N$, where all the implicit constants are independent of $\bm{a}^0,\bm{b}^0, n,\rho_T$, and $t$.
\end{lemma}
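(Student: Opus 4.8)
The plan is to put both trajectories in integral form and run a Gronwall argument, splitting the error into a \emph{consistency} part (exact versus approximate forcing along the \emph{same} trajectory) and a \emph{stability} part (the Lipschitz response of the approximate field). Concretely, I would write
\begin{align*}
  \bm{a}(t) - \bm{b}(t) = \bm{a}^0 - \bm{b}^0 + \int_0^t \prt{F(\bm{a}(s)) - F^n(\bm{a}(s))}\,\d s + \int_0^t \prt{F^n(\bm{a}(s)) - F^n(\bm{b}(s))}\,\d s,
\end{align*}
so the first integrand is the consistency error and the second is controlled by the Lipschitz constant of $F^n$. The point that makes this split the right one is that the consistency error is evaluated at $\bm{a}(s)$; this is precisely why \eqref{eq:gronwalest} features $\P_n^\perp g_{\bm{a}(s)}$ rather than a quantity tied to $\bm{b}$.

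For the consistency term I would use that $F_j(\bm{a}) - F_j^n(\bm{a}) = 2\J\,\nabla_x(R - R_n)(x;\bm{a},d)\rvert_{x=a_j}$, where $R - R_n$ is harmonic with boundary datum $\P_n^\perp g_{\bm{a}}$. Since each $a_j(s)$ stays at distance at least $\rho_T$ from $\partial\Omega$, the interior gradient estimate \eqref{eq:classicalest1} with $\ell=1$ gives $\abs{F(\bm{a}(s)) - F^n(\bm{a}(s))} \lesssim \rho_T^{-3/2}\norm{\P_n^\perp g_{\bm{a}(s)}}_{L^2(\partial\Omega)}$, and integrating over $[0,t]$ produces the second summand of \eqref{eq:gronwalest}.

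The main work — and the step I expect to be the genuine obstacle — is the uniform (in $n$) stability bound $\abs{F^n(\bm{a}(s)) - F^n(\bm{b}(s))} \lesssim \rho_T^{-5/2}\abs{\bm{a}(s) - \bm{b}(s)}$, as this is where the exponent $5/2$ in the Gronwall factor is born. The interaction sum is harmless, since $z\mapsto z/\abs{z}$ is Lipschitz with constant $\lesssim \rho_T^{-1}$ on $\abs{z}\geq\rho_T$. The delicate term is $\nabla_x R_n(\cdot;\bm{a})\rvert_{a_j} - \nabla_x R_n(\cdot;\bm{b})\rvert_{b_j}$, which I would break into a change of evaluation point and a change of boundary datum. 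The first is bounded by $\abs{a_j-b_j}$ times the Hessian of $R_n$, controlled by \eqref{eq:classicalest1} with $\ell=2$ (together with $\norm{g_{\bm{a}}}_{L^2(\partial\Omega)}\lesssim_N 1$ from \eqref{eq:lognorms}), hence by $\rho_T^{-5/2}\abs{\bm{a}-\bm{b}}$; the second concerns $R_n(\cdot;\bm{a}) - R_n(\cdot;\bm{b})$, harmonic with boundary datum $\P_n(g_{\bm{a}}-g_{\bm{b}})$, whose gradient at $b_j$ is bounded by \eqref{eq:classicalest1} with $\ell=1$ as $\rho_T^{-3/2}\norm{g_{\bm{a}}-g_{\bm{b}}}_{L^2(\partial\Omega)}$, after which \eqref{eq:logboundaryest} with $s=0$ contributes the extra factor $\rho_T^{-1}\abs{\bm{a}-\bm{b}}$ — again totalling $\rho_T^{-5/2}$. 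The subtlety requiring care is that these bounds need the whole segment between $a_j(s)$ and $b_j(s)$ to remain at distance $\gtrsim\rho_T$ from $\partial\Omega$ and from the other vortices; I would secure this by a continuation argument, restricting to the maximal interval on which $\abs{\bm{a}-\bm{b}}$ stays below a fixed fraction of $\rho_T$ and checking a posteriori that the Gronwall bound keeps it there.

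With both bounds in hand, Gronwall's inequality applied to $\abs{\bm{a}(t)-\bm{b}(t)}$ yields \eqref{eq:gronwalest} with the factor $e^{Ct/\rho_T^{5/2}}$. For the disk it then remains to make $\norm{\P_n^\perp g_{\bm{a}(s)}}_{L^2(\partial\Omega)}$ quantitative, and here I would exploit the explicit expansion $\log\abs{e^{\i\theta}-a} = -\Re\sum_{k\geq1} (\bar a\,e^{\i\theta})^k/k$, whose Fourier coefficients decay like $\abs{a}^{\abs{k}}/\abs{k}$ with $\abs{a} = 1 - \dist(a,\partial\Omega) \leq 1-\rho_T$. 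Summing the tail over $\abs{k}>n$ produces the geometric factor $(1-\rho_T)^n$ with the polynomial and $\rho_T$ powers recorded in \eqref{eq:gronwalestdisk}; substituting this into \eqref{eq:gronwalest} gives the disk estimate and completes the argument.
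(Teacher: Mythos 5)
Your proof of the general estimate \eqref{eq:gronwalest} is correct and is essentially the paper's own argument: your consistency/stability splitting is the paper's decomposition into $E_2$ (consistency, bounded via \eqref{eq:classicalest1} with $\ell=1$ by $\rho_T^{-3/2}\norm{\P_n^\perp g_{\bm{a}(s)}}_{L^2(\partial\Omega)}$) and $E_1+E_3$ (stability, bounded via \eqref{eq:classicalest1} with $\ell=2$, \eqref{eq:logboundaryest} at $s=0$, and \eqref{eq:lognorms}, giving the Lipschitz constant $\rho_T^{-5/2}$ that feeds the Gronwall exponent), followed by Gronwall's inequality. One point where you are in fact more careful than the paper: the hypothesis controls only $\rho(\bm{a}(t))$, while the stability estimates require $\dist(b_j(t),\partial\Omega)$, the mutual distances of the $b_j(t)$, and the segments joining $a_j(t)$ to $b_j(t)$ to stay $\gtrsim\rho_T$; your continuation argument supplies the justification that the paper's phrase \enquote{using the assumption $\rho_T>0$} silently skips.

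For the disk estimate \eqref{eq:gronwalestdisk} you take a genuinely different route, and here there is a concrete (though fixable) mismatch with the statement. Your computation of the exact Fourier coefficients, $|\widehat{g_a}(k)|=|a|^{|k|}/(2|k|)$, is correct and yields $\norm{\P_n^\perp g_{\bm{a}(s)}}_{L^2(\partial\Omega)}\lesssim_N (1-\rho_T)^{n+1}/(n\sqrt{\rho_T})$, hence a consistency contribution of order $(1-\rho_T)^{n+1}n^{-1}\rho_T^{-2}$. The paper instead obtains the geometric factor from the refined interior estimate \eqref{eq:classicaldiskest} (the harmonic extension of data orthogonal to the first $n$ modes decays like $|x|^{n+1-\ell}$ inside the disk) and pairs it with the purely algebraic tail bound \eqref{eq:logspectralconv}, $\norm{\P_n^\perp g_a}_{L^2(\partial\Omega)}\lesssim_\ell n^{-\ell}\rho_T^{-\ell+\frac12}$, giving $(1-\rho_T)^n n^{1-\ell}\rho_T^{-1-\ell}$ for every $\ell\in\N$. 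Your single bound implies the claimed one for $\ell\in\{1,2\}$ (since $\rho_T<1$), but not for $\ell\geq3$: the ratio of your bound to the claimed one is $(1-\rho_T)\,n^{\ell-2}\rho_T^{\ell-1}$, which diverges as $n\to\infty$ at fixed $\rho_T$, so the full family \enquote{for any $\ell\in\N$} with constants independent of $n$ and $\rho_T$ is not recovered. The cleanest repair keeps your idea: since the boundary datum of $R-R_n$ is orthogonal to the first $n$ modes, you may apply \eqref{eq:classicaldiskest} (rather than \eqref{eq:classicalest1}) to its harmonic extension and multiply by your sharp tail bound; this gives a consistency error of order $(1-\rho_T)^{2n+1}\rho_T^{-2}$, which dominates the claimed bound for every $\ell$ because $(1-\rho_T)^{n+1}(n\rho_T)^{\ell-1}\leq e^{-n\rho_T}(n\rho_T)^{\ell-1}\lesssim_\ell 1$. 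With that one additional ingredient your argument proves the statement in full, and indeed with a better geometric factor than the paper's.
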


\begin{proof} First, we split the error via the triangle inequality as
  \begin{multline*}
    |F^n(\bm{b}) - F(\bm{a})| \leq \underbrace{\sum_{j=1}^N |\nabla_x
      R_n(b_j;\bm{b},d)- \nabla_x R_n(a_j;\bm{a},d)|}_{\coloneqq
      E_1(\bm{a},\bm{b})} + \underbrace{\sum_{j=1}^N |\nabla R_n(a_j;\bm{a},d) -
      \nabla R(a_j;\bm{a},d)|}_{\coloneqq E_2(\bm{a})}\\
    + \underbrace{\sum_{k\neq j}^N \biggr|\frac{b_j-b_k}{|b_j-b_k|^2} -
      \frac{a_j-a_k}{|a_j-a_k|^2}\biggr|}_{\coloneqq E_3(\bm{a},\bm{b})}.
  \end{multline*}
  From estimate~\eqref{eq:est1}, we can bound the third term as
  \begin{align}
    E_3(\bm{a},\bm{b}) \lesssim
    \frac{|\bm{a}-\bm{b}|}{\min\{\rho(\bm{a}),\rho(\bm{b})\}^2}
    \label{eq:error3est}.
  \end{align}
  For the second error, we note that $R_n(\cdot;\bm{a},d) - R(\cdot;\bm{a},d)$
  is the solution to Laplace's equation with boundary data $\P_n^\perp
  g_{\bm{a}}$, where $g_{\bm{a}}(x) = -\sum_{j=1}^N d_j \log(x-a_j)$. Hence, from
  estimate~\eqref{eq:classicalest1} with $\ell=1$, we obtain
  \begin{align}
    E_2(\bm{a}) \lesssim \frac{1}{\min_j \{\dist(a_j,\partial
      \Omega)\}^{\frac32}} \norm{\P_n^\perp g_{\bm{a}}}_{L^2(\partial \Omega)}. \label{eq:error2est}
  \end{align}
  To control the last term, we observe (i) that $R_n(\cdot;\bm{b},d) -
  R_n(\cdot;\bm{a},d)$ solves Laplace's equation with boundary data $\P_n
  (g_{\bm{a}}-g_{\bm{b}})$, and (ii) that estimate~\eqref{eq:classicalest1} for
  the second derivative of $R_n(\cdot;\bm{a},d)$ gives an upper bound on the
  Lipschitz constant of the first derivative. Thus, from
  estimates~\eqref{eq:logboundaryest} and \eqref{eq:classicalest1}, we obtain
  \begin{align}
    E_1(\bm{a},\bm{b}) &\leq \sum_{j=1}^N \abs{\nabla_x R_n(a_j;\bm{a},d) -
      \nabla_x R_n(a_j;\bm{b},d)} + \abs{\nabla_x R_n(a_j;\bm{b},d) - \nabla_x R_n(b_j;\bm{b},d)} \nonumber \\
    &\lesssim \frac{\norm{\P_n(g_{\bm{a}}-g_{\bm{b}})}_{L^2(\partial \Omega)}}{\min_j \{\dist(a_j,\partial \Omega)\}^{\frac32}} + \frac{|\bm{a}-\bm{b}|}{\min_j\{ \dist(b_j,\partial \Omega)\}^{\frac52}} \norm{\P_n g_{\bm{b}}}_{L^2(\partial \Omega)} \nonumber \\
    &\lesssim |\bm{a}-\bm{b}| \frac{1}{\min_j\{\dist(b_j,\partial \Omega),\dist(a_j,\partial \Omega)\}^{\frac52}}, \label{eq:error1est}
  \end{align}
  where we used that $\norm{g_{\bm{a}}}_{L^2(\partial \Omega)} \lesssim 1$ with
  a constant independent of $\bm{a}$ because the logarithm function is locally
  integrable on $\R$. Summing up estimates~\eqref{eq:error3est},
  \eqref{eq:error2est}, and \eqref{eq:error1est}, using the assumption
  $\rho_T>0$, and the definition of $\bm{a}(t)$ and $\bm{b}(t)$ we find
  \begin{align*}
    |\dot{\bm{a}}(t) - \dot{\bm{b}}(t)| \lesssim \frac{|\bm{a}(t)-\bm{b}(t)|}{\rho_T^{\frac52}}  + \frac{\norm{\P_n^\perp g_{\bm{a}(t)}}_{L^2(\partial \Omega)}}{\rho_T^{\frac32}},
  \end{align*}
  from which we obtain~\eqref{eq:gronwalest} after applying Gronwal's inequality.
  For the estimate in the disk, one can simply
  apply~\eqref{eq:classicaldiskest} and \eqref{eq:logspectralconv} instead of
  \eqref{eq:classicalest1} to estimate the $E_2(\bm{a})$ term.

\end{proof}

\subsection{Time discretization}

The time integration error follows easily from the previous bounds and standard
considerations on the numerical integration of ODE, that we briefly recall
here.

\begin{lemma}\label{lem:disc_error}
  Let $\bm b$
  be the exact solution to $\dot{\bm b} = F^n(\bm b)$ and let $(\bm b_{\delta
    t}^k)_{k\delta t\leq T}$ be its numerical approximation with a RK4 method
  and time step $\delta t$:
  \[
    \begin{cases}
      \dot{\bm b} = F^n(\bm b),\\
      \bm b(0) = \bm b^0,
    \end{cases}\quad
    \begin{cases}
      \bm b_{\delta t}^k = \bm b_{\delta t}^{k-1} + \delta t \Phi_{\delta t,n}
      (\bm b_{\delta t}^{k-1}),\\
      \bm b_{\delta t}^0 = \bm b^0,
    \end{cases}
  \]
  where $\Phi_{\delta t, n}$ is the RK4 increment function associated with the
  approximate forcing term $F^n$ (see for instance
  \cite[Chapter II]{hairerSolvingOrdinaryDifferential1993}).
  Assume that
  \[
    0<\rho_T=\min\prt{\inf_{0\leq t\leq T}\rho(\bm b(t)),
      \min_{k\delta t\leq T}\rho(\bm b^k_{\delta t})}.
  \]
  Then, there exists constants $C_{\rho_T}>0$ and $\Lambda_{\rho_T}$, that depends on
  $\rho_T$ but not on $n$, $\bm b$ or $\delta t$, such that the following
  global in time error bound holds:
  \[
    \max_{k\delta t\leq T}|\bm b(k\delta t) - \bm b_{\delta t}^k | \leq
    \delta t^4 \frac{C_{\rho_T}}{\Lambda_{\rho_T}}\prt{e^{\Lambda_{\rho_T} T} - 1}.
  \]
\end{lemma}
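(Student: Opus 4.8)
The plan is to run the classical global error analysis for a one-step method, combining a local truncation (consistency) estimate of order four with a stability estimate coming from the Lipschitz continuity of the RK4 increment function, and then assembling the two through the standard discrete Gronwall (Lady Windermere's fan) argument, see \cite[Chapter II]{hairerSolvingOrdinaryDifferential1993}. The only nonstandard point is that every constant must be shown to depend on $\rho_T$ (and on $T$, $\Omega$, $N$) but \emph{not} on the number $n$ of harmonic polynomials; everything else is textbook.

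First I would establish the required regularity of the approximate forcing term $F^n$. Writing $F^n$ as the sum of the explicit pairwise terms $\frac{a_j-a_k}{|a_j-a_k|^2}$ and the gradients $\nabla_x R_n(x;\bm a,d)|_{x=a_j}$, I note that on any configuration with $\rho(\bm a)\geq\rho_T$ the pairwise terms are smooth with all derivatives bounded by a constant depending only on $\rho_T$, since $|a_j-a_k|\geq\rho_T$. For the harmonic part, the elliptic estimate~\eqref{eq:classicalest1} (respectively~\eqref{eq:classicaldiskest} in the disk) bounds every derivative $\partial^\alpha R_n$ evaluated at $a_j$ by $C_\ell\,\dist(a_j,\partial\Omega)^{-\frac12-|\alpha|}\lesssim_\ell \rho_T^{-\frac12-|\alpha|}$, with a constant \emph{independent of $n$} because the boundary datum $\P_n g_{\bm a}$ has $L^2(\partial\Omega)$ norm bounded uniformly in $n$. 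Hence $F^n$ is of class $C^4$ on the region $\{\rho\geq\rho_T\}$, with all derivatives up to order four bounded by a single constant $M_{\rho_T}$ that is uniform in $n$.

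With this regularity in hand, the two ingredients of the one-step analysis follow immediately. The Lipschitz continuity of $F^n$ yields a Lipschitz constant for the RK4 increment function $\Phi_{\delta t,n}$ of the form $\Lambda_{\rho_T}=c\,M_{\rho_T}$, uniformly for $\delta t$ small; and the order-four consistency of RK4, together with the bound on the derivatives of $F^n$ up to order four, gives a one-step (local truncation) error $\bigl|\bm b((k+1)\delta t)-\bm b(k\delta t)-\delta t\,\Phi_{\delta t,n}(\bm b(k\delta t))\bigr|\leq C_{\rho_T}\delta t^5$, again with $C_{\rho_T}$ uniform in $n$. Here I use the assumption $\rho_T>0$ twice: once to keep the exact trajectory in the smooth region, and once (via the $\min$ over the numerical iterates built into the definition of $\rho_T$) to keep all arguments of $\Phi_{\delta t,n}$ in that region, so that the Lipschitz and consistency bounds actually apply along the whole discrete trajectory.

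Finally I would close the argument by the usual recursion for the global error $e_k=\bm b(k\delta t)-\bm b_{\delta t}^k$. Subtracting the exact and numerical one-step maps and inserting the two bounds above gives $|e_{k+1}|\leq(1+\Lambda_{\rho_T}\delta t)\,|e_k|+C_{\rho_T}\delta t^5$, and since $e_0=0$, solving this linear recursion yields $|e_k|\leq C_{\rho_T}\delta t^4\,\frac{(1+\Lambda_{\rho_T}\delta t)^k-1}{\Lambda_{\rho_T}}\leq\frac{C_{\rho_T}}{\Lambda_{\rho_T}}\,\delta t^4\bigl(e^{\Lambda_{\rho_T}T}-1\bigr)$, where I used $(1+\Lambda_{\rho_T}\delta t)^k\leq e^{\Lambda_{\rho_T}k\delta t}\leq e^{\Lambda_{\rho_T}T}$ for $k\delta t\leq T$; taking the maximum over $k\delta t\leq T$ gives the claim. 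The hard part, and the only place where genuine care is needed, is the uniformity in $n$ from the second paragraph: one must verify that the $n$-dependent truncation $\P_n$ does not degrade the interior derivative bounds on $R_n$, which is exactly the content of the elliptic estimates recalled earlier; the remainder is the standard convergence theory of Runge--Kutta methods.
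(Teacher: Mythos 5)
Your proposal is correct and follows essentially the same route as the paper's proof: establish that $F^n$ is Lipschitz and has derivatives up to order four bounded uniformly in $n$ on the region $\{\rho \geq \rho_T\}$ (via the elliptic estimates \eqref{eq:classicalest1} and \eqref{eq:lognorms}, with uniformity in $n$ coming from the $L^2$-boundedness of the projected boundary data), deduce an $O(\delta t^5)$ local truncation error, and conclude by the standard global error analysis for one-step methods. The only cosmetic difference is that you write out the discrete Gronwall recursion explicitly where the paper cites \cite[Theorem II.3.6]{hairerSolvingOrdinaryDifferential1993}.
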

\begin{proof}
  First, we recall, using notations from the proof of Lemma~\ref{lem:dyn_err}, that
  \[
    \Forall \bm a, \bm b\in\Omega,\quad
    |F^n(\bm a) - F^n(\bm b)| \leq E_1(\bm a,\bm b) + E_3(\bm a,\bm b) \lesssim
    \frac{|\bm a-\bm b|}
    {\min_j\set{\dist(a_j,\partial\Omega),\dist(b_j,\partial\Omega)}^{5/\textcolor{red}{2}}}
    + \frac{|\bm a-\bm b|}{\min(\rho(\bm a), \rho(\bm
      b))^{\textcolor{red}{2}}},
  \]
  with constants that do not depend on $n$. Thus, since
  \[
    0<\rho_T=\min\prt{\inf_{0\leq t\leq T}\rho(\bm b(t)),
      \min_{k\delta t\leq T}\rho(\bm b^k_{\delta t})},
  \]
  the function $F^n$ is Lipschitz continuous on the domain of interest with a Lipschitz
  constant $\Lambda_{\rho_T}$ independent of $n$. To conclude, note that
  $|\nabla_{\bm{b}}^4F^n(\bm b)| \leq C$ for some $C>0$ independent of $n$ and
  $\bm{b}$, as long as $\rho_T>0$ (this follows from the linearity of Laplace's
  equation and estimates~\eqref{eq:classicalest1} and~\eqref{eq:lognorms}). As a
  result, the local truncation error can be bounded as follows: there exists
  $C_{\rho_T}>0$, independent of $n$, such that
  \[
    |\bm b(\delta t) - \bm b^1_{\delta t}| \leq C_{\rho_T} {\delta t}^5.
  \]
  Thus, by combining the local truncation error and the Lipschitz continuity of
  the numerical integrator, we obtain the global in time error bound
  (see \cite[Theorem II.3.6]{hairerSolvingOrdinaryDifferential1993}).
\end{proof}

\end{document}